\documentclass[11pt,a4paper]{amsart}
\usepackage{amsfonts,amsmath,amssymb,amsthm,mathtools}
\usepackage{times}
\usepackage{color}
\usepackage{graphicx}

\numberwithin{equation}{section}
\renewcommand{\figurename}{Fig.}

\DeclareSymbolFont{SY}{U}{psy}{m}{n}
\DeclareMathSymbol{\emptyset}{\mathord}{SY}{'306}

\DeclareMathOperator{\Ran}{Ran}
\DeclareMathOperator{\Dom}{Dom}
\DeclareMathOperator{\spec}{spec}
\DeclareMathOperator{\dist}{dist}

\DeclarePairedDelimiter{\abs}{\lvert}{\rvert}
\DeclarePairedDelimiter{\norm}{\lVert}{\rVert}

\newcommand{\R}{\mathbb{R}}
\newcommand{\N}{\mathbb{N}}
\newcommand{\EE}{\mathsf{E}}
\newcommand{\dd}{\mathrm{d}}

\newcommand{\cH}{{\mathcal H}}
\newcommand{\cO}{{\mathcal O}}

\marginparwidth 20mm \addtolength{\textheight}{10mm}
\addtolength{\textwidth}{20mm} \addtolength{\topmargin}{-10mm}
\oddsidemargin 10mm \evensidemargin 10mm

\newtheorem{introtheorem}{Theorem}{\bf}{\it}
\newtheorem{theorem}{Theorem}[section]{\bf}{\it}
\newtheorem{proposition}[theorem]{Proposition}{\bf}{\it}
\newtheorem{corollary}[theorem]{Corollary}{\bf}{\it}
\newtheorem{lemma}[theorem]{Lemma}{\bf}{\it}
\newtheorem{remark}[theorem]{Remark}{\it}{\rm}
\newtheorem{definition}[theorem]{Definition}{\bf}{\it}
\newtheorem{hypothesis}[theorem]{Hypothesis}{\bf}{\it}

\title[On an estimate in the subspace perturbation problem]{On an estimate in the subspace perturbation problem}
\subjclass[2010]{Primary 47A55; Secondary 47A15, 47B15}
\keywords{Subspace perturbation problem, spectral subspaces, maximal angle between subspaces}
\date{}

\author[A.\ Seelmann]{Albrecht Seelmann$^*$}
\address{A.~Seelmann, FB 08 - Institut f\"{u}r Mathematik,
Johannes Gutenberg-Universi\-t\"{a}t Mainz,
Staudinger Weg 9,
D-55099 Mainz,
Germany}
\email{seelmann@mathematik.uni-mainz.de}
\thanks{$^*$The material presented in this work will be part of the author's Ph.D. thesis.}

\begin{document}

\begin{abstract}
The problem of variation of spectral subspaces for linear self-adjoint operators under an additive bounded perturbation is
considered. The aim is to find the best possible upper bound on the norm of the difference of two spectral projections associated
with isolated parts of the spectrum of the perturbed and unperturbed operators.

In the approach presented here, a constrained optimization problem on a specific set of parameters is formulated, whose solution
yields an estimate on the arcsine of the norm of the difference of the corresponding spectral projections. The problem is solved
explicitly. This optimizes the approach by Albeverio and Motovilov in [Complex Anal.\ Oper.\ Theory \textbf{7} (2013), 1389--1416].
In particular, the resulting estimate is stronger than the one obtained there.
\end{abstract}

\maketitle

\section{Introduction and the main result}\label{sec:intro}
Let $A$ be a self-adjoint possibly unbounded operator on a separable Hilbert space $\cH$ such that the spectrum of $A$ is separated
into two disjoint components, that is,
\begin{equation}\label{eq:specSep}
 \spec(A)=\sigma\cup\Sigma\quad\text{ with }\quad d:=\dist(\sigma,\Sigma)>0\,.
\end{equation}
Let $V$ be a bounded self-adjoint operator on $\cH$.

It is well known (see, e.g., \cite[Theorem V.4.10]{Kato66}) that the spectrum of the perturbed self-adjoint operator $A+V$ is
confined in the closed $\norm{V}$-neighbourhood of the spectrum of the unperturbed operator $A$, that is,
\begin{equation}\label{eq:specPert}
 \spec(A+V)\subset \overline{\cO_{\norm{V}}\bigl(\spec(A)\bigr)}\,,
\end{equation}
where $\cO_{\norm{V}}\bigl(\spec(A)\bigr)$ denotes the open $\norm{V}$-neighbourhood of $\spec(A)$. In particular, if
\begin{equation}\label{eq:pertNormBound}
 \norm{V} < \frac{d}{2}\,,
\end{equation}
then the spectrum of the operator $A+V$ is likewise separated into two disjoint components $\omega$ and $\Omega$, where
\[
 \omega=\spec(A+V)\cap \cO_{d/2}(\sigma)\quad \text{ and }\quad \Omega=\spec(A+V)\cap \cO_{d/2}(\Sigma)\,.
\]
Therefore, under condition \eqref{eq:pertNormBound}, the two components of the spectrum of $A+V$ can be interpreted as
perturbations of the corresponding original spectral components $\sigma$ and $\Sigma$ of $\spec(A)$. Clearly, the condition
\eqref{eq:pertNormBound} is sharp in the sense that if $\norm{V}\ge d/2$, the spectrum of the perturbed operator $A+V$ may not have
separated components at all.

The effect of the additive perturbation $V$ on the spectral subspaces for $A$ is studied in terms of the corresponding spectral
projections. Let $\EE_A(\sigma)$ and $\EE_{A+V}\bigl(\cO_{d/2}(\sigma)\bigr)$ denote the spectral projections for $A$ and $A+V$
associated with the Borel sets $\sigma$ and $\cO_{d/2}(\sigma)$, respectively. It is well known that
$\norm{\EE_A(\sigma)-\EE_{A+V}\bigl(\cO_{d/2}(\sigma)\bigr)}\le 1$ since the corresponding inequality holds for every difference of
orthogonal projections in $\cH$, see, e.g., \cite[Section 34]{AG93}. Moreover, if
\begin{equation}\label{eq:projAcute}
 \norm{\EE_A(\sigma)-\EE_{A+V}\bigl(\cO_{d/2}(\sigma)\bigr)} < 1\,,
\end{equation}
then the spectral projections $\EE_A(\sigma)$ and $\EE_{A+V}\bigl(\cO_{d/2}(\sigma)\bigr)$ are unitarily equivalent, see, e.g.,
\cite[Theorem I.6.32]{Kato66}.

In this sense, if inequality \eqref{eq:projAcute} holds, the spectral subspace $\Ran\EE_{A+V}\bigl(\cO_{d/2}(\sigma)\bigr)$ can be
understood as a rotation of the unperturbed spectral subspace $\Ran\EE_A(\sigma)$. The quantity
\[
 \arcsin\bigl(\norm{\EE_A(\sigma) - \EE_{A+V}\bigl(\cO_{d/2}(\sigma)\bigr)}\bigr)
\]
serves as a measure for this rotation and is called the \emph{maximal angle} between the spectral subspaces $\Ran\EE_A(\sigma)$ and
$\Ran\EE_{A+V}\bigl(\cO_{d/2}(\sigma)\bigr)$. A short survey on the concept of the maximal angle between closed subspaces of a
Hilbert space can be found in \cite[Section 2]{AM13}; see also \cite{DK70}, \cite[Theorem 2.2]{KMM03:2}, \cite[Section 2]{Seel13},
and references therein.

It is a natural question whether the bound \eqref{eq:pertNormBound} is sufficient for inequality \eqref{eq:projAcute} to hold, or
if one has to impose a stronger bound on the norm of the perturbation $V$ in order to ensure \eqref{eq:projAcute}. Basically, the
following two problems arise:
\begin{enumerate}
 \renewcommand{\theenumi}{\roman{enumi}}
 \item What is the best possible constant $c_\text{opt}\in\bigl(0,\frac{1}{2}\bigr]$ such that
       \[
        \arcsin\bigl(\norm{\EE_A(\sigma)-\EE_{A+V}(\cO_{d/2}\bigl(\sigma)\bigr)}\bigr)<\frac{\pi}{2}\quad\text{ whenever }\quad
        \norm{V}<c_\text{opt}\cdot d\ ?
       \]
 \item Which function $f\colon[0,c_\text{opt})\to\bigl[0,\frac{\pi}{2}\bigr)$ is best possible in the estimate
       \[
        \arcsin\bigl(\norm{\EE_A(\sigma)-\EE_{A+V}(\cO_{d/2}\bigl(\sigma)\bigr)}\bigr) \le f\biggl(\frac{\norm{V}}{d}\biggr)\,,
        \quad \norm{V}<c_\text{opt}\cdot d\ ?
       \]
\end{enumerate}
Both the constant $c_{\mathrm{opt}}$ and the function $f$ are supposed to be universal in the sense that they are independent of
the operators $A$ and $V$.

Note that we have made no assumptions on the disposition of the spectral components $\sigma$ and $\Sigma$ other than
\eqref{eq:specSep}. If, for example, $\sigma$ and $\Sigma$ are additionally assumed to be subordinated, that is,
$\sup\sigma<\inf\Sigma$ or vice versa, or if one of the two sets lies in a finite gap of the other one, then the corresponding best
possible constant in problem (i) is known to be $\frac{1}{2}$, and the best possible function $f$ in problem (ii) is given by
$f(x)=\frac{1}{2}\arcsin\bigl(2x\bigr)$, see, e.g., \cite[Lemma 2.3]{KMM03} and \cite[Theorem 5.1]{Davis63}; see also
\cite[Remark 2.9]{Seel13}.

However, under the sole assumption \eqref{eq:specSep}, both problems are still unsolved. It has been conjectured that
$c_\text{opt}=\frac{1}{2}$ (see \cite{AM13}; cf.\ also \cite{KMM03} and \cite{KMM07}), but there is no proof available for that
yet. So far, only lower bounds on the optimal constant $c_\text{opt}$ and upper bounds on the best possible function $f$ can be
given. For example, in \cite[Theorem 1]{KMM03} it was shown that
\[
 c_{\mathrm{opt}} \ge \frac{2}{2+\pi}=0.3889845\ldots
\]
and
\begin{equation}\label{eq:KMM}
 f(x) \le \arcsin\Bigl(\frac{\pi}{2}\,\frac{x}{1-x}\Bigr)<\frac{\pi}{2}\quad\text{ for }\quad 0\le x < \frac{2}{2+\pi}\,.
\end{equation}
In \cite[Theorem 6.1]{MS10} this result was strengthened to
\[
 c_{\mathrm{opt}} \ge \frac{\sinh(1)}{\exp(1)}=0{.}4323323\ldots
\]
and
\begin{equation}\label{eq:MS}
 f(x) \le \frac{\pi}{4}\log\Bigl(\frac{1}{1-2x}\Bigr) < \frac{\pi}{2}\quad\text{ for }\quad 0\le x  <\frac{\sinh(1)}{\exp(1)}\,.
\end{equation}
Recently, Albeverio and Motovilov have shown in \cite[Theorem 5.4]{AM13} that
\begin{equation}\label{eq:AMConst}
 c_\text{opt} \ge c_* = 16\,\frac{\pi^6-2\pi^4+32\pi^2-32}{(\pi^2+4)^4} = 0{.}4541692\ldots
\end{equation}
and
\begin{equation}\label{eq:AM}
 f(x) \le M_*(x)<\frac{\pi}{2}\quad \text{ for }\quad 0\le x < c_*\,,
\end{equation}
where
\begin{equation}\label{eq:AMFunc}
 M_*(x)=
 \begin{cases}
  \frac{1}{2}\arcsin(\pi x) & \text{for}\quad 0\le x\le \frac{4}{\pi^2+4}\,,\\[0.1cm]
  \frac{1}{2}\arcsin\bigl(\frac{4\pi}{\pi^2+4}\bigr) + \frac{1}{2}\arcsin\Bigl(\pi\,\frac{(\pi^2+4)x-4}
    {\pi^2-4}\Bigr) & \text{for}\quad \frac{4}{\pi^2+4} < x \le \frac{8\pi^2}{(\pi^2+4)^2}\,,\\[0.2cm]
  \arcsin\bigl(\frac{4\pi}{\pi^2+4}\bigr) + \frac{1}{2}\arcsin\Bigl(\pi\,\frac{(\pi^2+4)^2x-8\pi^2}
    {(\pi^2-4)^2}\Bigr) & \text{for}\quad \frac{8\pi^2}{(\pi^2+4)^2} < x \le c_*\,.
 \end{cases}
\end{equation}
It should be noted that the first two results \eqref{eq:KMM} and \eqref{eq:MS} were originally formulated in \cite{KMM03} and
\cite{MS10}, respectively, only for the case where the operator $A$ is assumed to be bounded. However, both results admit an
immediate, straightforward generalization to the case where the operator $A$ is allowed to be unbounded, see, e.g.,
\cite[Proposition 3.4 and Theorem 3.5]{AM13}.

The aim of the present work is to sharpen the estimate \eqref{eq:AM}. More precisely, our main result is as follows.

\begin{introtheorem}\label{thm:mainResult}
 Let $A$ be a self-adjoint operator on a separable Hilbert space $\cH$ such that the spectrum of $A$ is separated into two disjoint
 components, that is,
 \[
  \spec(A)=\sigma\cup\Sigma\quad\text{ with }\quad d:=\dist(\sigma,\Sigma)>0\,.
 \]
 Let $V$ be a bounded self-adjoint operator on $\cH$ satisfying
 \[
  \norm{V} < c_\mathrm{crit}\cdot d
 \]
 with
 \[
  c_\mathrm{crit}=\frac{1-\bigl(1-\frac{\sqrt{3}}{\pi}\bigr)^3}{2}=3\sqrt{3}\,\frac{\pi^2-\sqrt{3}\pi+1}{2\pi^3}=
  0{.}4548399\ldots
 \]
 Then, the spectral projections $\EE_A(\sigma)$ and $\EE_{A+V}\bigl(\cO_{d/2}(\sigma)\bigr)$ for the self-adjoint operators $A$ and
 $A+V$ associated with $\sigma$ and the open $\frac{d}{2}$-neighbourhood $\cO_{d/2}(\sigma)$ of $\sigma$, respectively, satisfy the
 estimate
 \begin{equation}\label{eq:mainResult}
  \arcsin\bigl(\norm{\EE_A({\sigma})-\EE_{A+V}\bigl(\cO_{d/2}(\sigma)\bigr)}\bigr) \le N\biggl(\frac{\norm{V}}{d}\biggr)
  < \frac{\pi}{2}\,,
 \end{equation}
 where the function $N\colon[0,c_\mathrm{crit}]\to\bigl[0,\frac{\pi}{2}\bigr]$ is given by
 \begin{equation}\label{eq:mainResultFunc}
  N(x) =
  \begin{cases}
   \frac{1}{2}\arcsin(\pi x) & \text{ for }\quad 0\le x\le \frac{4}{\pi^2+4}\,,\\[0.15cm]
   \arcsin\Bigl(\sqrt{\frac{2\pi^2x-4}{\pi^2-4}}\,\Bigr) & \text{ for }\quad \frac{4}{\pi^2+4} < x < 4\,\frac{\pi^2-2}
     {\pi^4}\,,\\[0.15cm]
   \arcsin\bigl(\frac{\pi}{2}(1-\sqrt{1-2x}\,)\bigr) & \text{ for }\quad 4\,\frac{\pi^2-2}{\pi^4} \le x \le \kappa\,,\\[0.15cm]
   \frac{3}{2}\arcsin\bigl(\frac{\pi}{2}(1-\sqrt[\leftroot{4}3]{1-2x}\,)\bigr) & \text{ for }\quad
     \kappa < x \le c_\mathrm{crit}\,.
  \end{cases}
 \end{equation}
 Here, $\kappa\in\bigl(4\frac{\pi^2-2}{\pi^4},2\frac{\pi-1}{\pi^2}\bigr)$ is the unique solution to the equation
 \begin{equation}\label{eq:kappa}
  \arcsin\Bigl(\frac{\pi}{2}\bigl(1-\sqrt{1-2\kappa}\,\bigr)\Bigr)=\frac{3}{2}\arcsin\Bigl(\frac{\pi}{2}
  \bigl(1-\sqrt[\leftroot{4}3]{1-2\kappa}\,\bigr)\Bigr)
 \end{equation}
 in the interval $\bigl(0,2\frac{\pi-1}{\pi^2}\bigr]$. The function $N$ is strictly increasing, continuous on
 $[0,c_\mathrm{crit}]$, and continuously differentiable on $(0,c_\mathrm{crit})\setminus\{\kappa\}$.
\end{introtheorem}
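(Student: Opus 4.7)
The plan is to build on the approach of Kostrykin--Makarov--Motovilov \cite{KMM03}, Makarov--Seelmann \cite{MS10}, and Albeverio--Motovilov \cite{AM13}, namely to reduce the problem to a constrained optimization of the angles accumulated along an interpolating path of operators, and then to carry out that optimization \emph{explicitly} rather than through a convenient ad hoc splitting. The building block is the Davis--Kahan type $\sin 2\Theta$ estimate that the introduction already alludes to: under the hypothesis \eqref{eq:specSep}, every bounded self-adjoint $W$ with $\norm{W}<d/\pi$ satisfies
\[
 \arcsin\bigl(\norm{\EE_A(\sigma)-\EE_{A+W}(\cO_{d/2}(\sigma))}\bigr) \le \tfrac{1}{2}\arcsin\bigl(\pi\norm{W}/d\bigr),
\]
which is recorded, e.g., in \cite[Section 2]{AM13}. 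Applying this directly to $W=V$ already yields the first piece of $N$ on $[0,4/(\pi^2+4)]$; the upper endpoint is \emph{not} the threshold $1/\pi$ of validity but the place where a splitting will start to be strictly better.

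For larger $\norm{V}$, I would choose a partition $0=t_0<t_1<\dots<t_n=1$ and interpolate through $A_k:=A+t_kV$. As long as the step sizes $\Delta_k:=t_k-t_{k-1}$ and the gaps $d_{k-1}$ of $\spec(A_{k-1})$ satisfy $\pi\Delta_k\norm{V}<d_{k-1}$, the baseline applies to the elementary step $A_{k-1}\rightsquigarrow A_k$ and contributes the angle $\theta_k:=\tfrac{1}{2}\arcsin(\pi\Delta_k\norm{V}/d_{k-1})$. Since the maximal angle is a genuine metric on the Grassmannian of closed subspaces (cf.\ \cite[Section 2]{AM13}), the total angle between $\EE_A(\sigma)$ and $\EE_{A+V}(\cO_{d/2}(\sigma))$ is controlled by $\sum_k\theta_k$, while the worst-case shrinkage $d_k\ge d-2t_k\norm{V}$ of the gap is furnished by \eqref{eq:specPert}. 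Setting $x=\norm{V}/d$, the problem reduces to minimizing $\sum_k\theta_k$ over $n\in\N$ and admissible partitions, subject to the recursion on the $d_k$ and the requirement that the cumulative angle stay below $\pi/2$.

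The heart of the proof is the explicit solution of this constrained minimization. For $n=1$ no optimization is needed and one recovers the first piece. For $n=2$, writing out the Lagrangian in the two step variables and analysing the KKT conditions should split into two regimes: an active-constraint regime enforcing $\theta_1=\theta_2$ together with a saturated gap constraint, which produces the square-root inside-arcsine form and yields piece two on $(4/(\pi^2+4),4(\pi^2-2)/\pi^4)$; and an interior regime in which the gap constraint is inactive, whose balanced optimum evaluates to $\arcsin(\tfrac{\pi}{2}(1-\sqrt{1-2x}))$, giving piece three. For $n=3$, the fully balanced optimum (all three angles equal) gives the last piece $\tfrac{3}{2}\arcsin(\tfrac{\pi}{2}(1-\sqrt[3]{1-2x}))$. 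The breakpoint $\kappa$ is then characterized as the crossing of pieces three and four, i.e., as the unique solution of \eqref{eq:kappa}; existence and uniqueness in $(0,2(\pi-1)/\pi^2]$ would follow from a monotonicity/convexity check on the two sides of \eqref{eq:kappa}, and the remaining assertions (strict monotonicity of $N$, continuity throughout, and $C^1$-smoothness off $\kappa$) can be read off the closed-form pieces together with a matching check at the interior breakpoints $4/(\pi^2+4)$ and $4(\pi^2-2)/\pi^4$, where adjacent pieces share common derivatives by construction of the optimization transitions.

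I expect the main obstacle to be not the setup but the case analysis of the KKT system and the verification that no further subdivision with $n\ge 4$ can beat the proposed $N(x)$ on any subinterval of $[0,c_\mathrm{crit}]$. That $n=3$ already suffices up to $c_\mathrm{crit}$ is morally clear from the diminishing returns of further subdivision, but making it rigorous seems to require a monotonicity argument comparing the minima of $\sum_k\theta_k$ as $n$ increases, uniform on the relevant $x$-range. A secondary subtlety is to confirm that the estimate $d_k\ge d-2t_k\norm{V}$ is not being used wastefully: any improvement here would propagate through the whole optimization, so one has to verify that this inequality is in fact saturated in worst-case configurations (via direct-sum test operators of the sort considered in \cite{KMM03}, \cite{KMM07}, \cite{AM13}), lest the resulting $N$ fail to be the true outcome of the stated optimization scheme.
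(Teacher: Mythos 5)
Your proposal takes essentially the same route as the paper: interpolate along $B_t=A+td\,V/\norm{V}$, use the triangle inequality for the metric $\arcsin\norm{P-Q}$ on orthogonal projections, apply the $\sin2\Theta$-type a priori bound of Proposition \ref{prop:genRotBound} on each elementary step, and optimize over finite partitions; the paper formulates the optimization dually (maximize the reachable $t$ for a fixed total angle $\theta$, giving $T(\theta)$ in Theorem \ref{thm:solOptProb}, and then set $N=T^{-1}$), which is what you call the "equivalent reformulation." The obstacle you correctly single out --- ruling out that partitions into four or more steps can ever help --- is exactly what the paper's truncation argument (Lemma \ref{lem:paramSubst}, combined with the permutation invariance in Lemma \ref{lem:paramPermut}) is designed to overcome, by showing that any optimal sequence of step parameters must have every prefix itself optimal for the reduced sub-problem, which forces $T_n=T_2$ for all $n\ge2$. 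Two cautions on your sketch: your regime-to-piece assignment for the two-step case appears to be reversed (the balanced critical point $\lambda_0=\lambda_1$ produces the piece $\arcsin\bigl(\tfrac{\pi}{2}(1-\sqrt{1-2x})\bigr)$, while the square-root-inside-arcsine piece comes from the unbalanced interior Lagrange critical point with $\lambda_0\ne\lambda_1$ sharing the ratio $\sqrt{1-\pi^2\lambda^2}/(1-2\lambda)$), and your secondary worry about whether the gap bound $\delta_t\ge(1-2t)d$ is saturated in worst-case configurations is a non-issue: the theorem is a one-sided estimate and the constrained optimum computed with that pessimistic lower bound already yields a valid $N$, so no matching test operators are needed.
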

Numerical calculations give $\kappa=0{.}4098623\ldots$

The estimate \eqref{eq:mainResult} in Theorem \ref{thm:mainResult} remains valid if the constant $\kappa$ in the definition of the
function $N$ is replaced by any other constant within the interval $\bigl(4\frac{\pi^2-2}{\pi^4},2\frac{\pi-1}{\pi^2}\bigr)$, see
Remark \ref{rem:kappaRepl} below. However, the particular choice \eqref{eq:kappa} ensures that the function $N$ is continuous and
as small as possible. In particular, we have $N(x) = M_*(x)$ for $0\le x\le\frac{4}{\pi^2+4}$ and
\[
  N(x) < M_*(x)\quad\text{ for }\quad \frac{4}{\pi^2+4} < x \le c_*\,,
\]
where $c_*$ and $M_*$ are given by \eqref{eq:AMConst} and \eqref{eq:AMFunc} respectively, see Remark \ref{rem:estOptimality} below.

From Theorem \ref{thm:mainResult} we immediately deduce that
\[
 c_{\mathrm{opt}}\ge c_\mathrm{crit} > c_*
\]
and
\[
 f(x) \le N(x)<\frac{\pi}{2}\quad\text{ for }\quad 0\le x< c_\mathrm{crit}\,.
\]
Both are the best respective bounds for the two problems (i) and (ii) known so far.

The paper is organized as follows:
In Section \ref{sec:optProb}, based on the triangle inequality for the maximal angle and a suitable a priori rotation bound for
small perturbations (see Proposition \ref{prop:genRotBound}), we formulate a constrained optimization problem, whose solution
provides an estimating function for the maximal angle between the corresponding spectral subspaces, see Definition
\ref{def:optProb}, Proposition \ref{prop:mainEstimate}, and Theorem \ref{thm:solOptProb}. In this way, the approach by Albeverio
and Motovilov in \cite{AM13} is optimized and, in particular, a proof of Theorem \ref{thm:mainResult} is obtained. The explicit
solution to the optimization problem is given in Theorem \ref{thm:solOptProb}, which is proved in Section \ref{sec:solOptProb}. The
technique used there involves variational methods and may also be useful for solving optimization problems of a similar structure.

Finally, Appendix \ref{app:sec:inequalities} is devoted to some elementary inequalities used in Section \ref{sec:solOptProb}.

\section{An optimization problem}\label{sec:optProb}
In this section, we formulate a constrained optimization problem, whose solution provides an estimate on the maximal angle between
the spectral subspaces associated with isolated parts of the spectrum of the corresponding perturbed and unperturbed operators,
respectively. In particular, this yields a proof of Theorem \ref{thm:mainResult}.

We make the following notational setup.

\begin{hypothesis}\label{app:hypHyp}
 Let $A$ be as in Theorem \ref{thm:mainResult}, and let $V\neq0$ be a bounded self-adjoint operator on the Hilbert space $\cH$. For
 $0\le t<\frac{1}{2}$, introduce $B_t:=A+td\,\frac{V}{\norm{V}}$, $\Dom(B_t):=\Dom(A)$, and denote by
 $P_t:=\EE_{B_t}\bigl(\cO_{d/2}(\sigma)\bigr)$ the spectral projection for $B_t$ associated with the open
 $\frac{d}{2}$-neighbourhood $\cO_{d/2}(\sigma)$ of $\sigma$.
\end{hypothesis}

Under Hypothesis \ref{app:hypHyp}, one has $\norm{B_t-A}=td<\frac{d}{2}$ for $0\le t<\frac{1}{2}$. Taking into account the
inclusion \eqref{eq:specPert}, the spectrum of each $B_t$ is likewise separated into two disjoint components, that is,
\[
 \spec(B_t) = \omega_t\cup\Omega_t\quad\text{ for }\quad 0\le t<\frac{1}{2}\,,
\]
where
\[
 \omega_t=\spec(B_t)\cap\overline{\cO_{td}(\sigma)}\quad\text{ and }\quad
 \Omega_t=\spec(B_t)\cap\overline{\cO_{td}(\Sigma)}\,.
\]
In particular, one has
\begin{equation}\label{eq:specGapBound}
 \delta_t:=\dist(\omega_t,\Omega_t) \ge (1-2t)d>0 \quad\text{ for }\quad 0\le t<\frac{1}{2}\,.
\end{equation}
Moreover, the mapping $\bigl[0,\frac{1}{2}\bigr)\ni t\mapsto P_t$ is norm continuous, see, e.g., \cite[Theorem 3.5]{AM13};
cf.\ also the forthcoming estimate \eqref{eq:localKMM}.

For arbitrary $0\le r\le s<\frac{1}{2}$, we can consider $B_s=B_r+(s-r)d\frac{V}{\norm{V}}$ as a perturbation of $B_r$. Taking into
account the a priori bound \eqref{eq:specGapBound}, we then observe that
\begin{equation}\label{eq:localPert}
 \frac{\norm{B_s-B_r}}{\delta_r} = \frac{(s-r)d}{\dist(\omega_r,\Omega_r)}\le\frac{s-r}{1-2r} < \frac{1}{2} \quad\text{ for }\quad
 0\le r\le s<\frac{1}{2}\,.
\end{equation}
Furthermore, it follows from \eqref{eq:localPert} and the inclusion \eqref{eq:specPert} that $\omega_s$ is exactly the part of
$\spec(B_s)$ that is contained in the open $\frac{\delta_r}{2}$-neighbourhood of $\omega_r$, that is,
\begin{equation}\label{eq:localSpecPert}
 \omega_s = \spec(B_s)\cap\cO_{\delta_r/2}(\omega_r) \quad\text{ for }\quad 0\le r\le s<\frac{1}{2}\,.
\end{equation}

Let $t\in\bigl(0,\frac{1}{2}\bigr)$ be arbitrary, and let $0=t_0<t_1<\dots<t_{n+1}=t$ with $n\in\N_0$ be a finite partition of the
interval $[0,t]$. Define
\begin{equation}\label{eq:defLambda}
 \lambda_j := \frac{t_{j+1}-t_j}{1-2t_j}<\frac{1}{2}\,,\quad j=0,\dots,n\,.
\end{equation}
Recall that the mapping $\rho$ given by
\begin{equation}\label{eq:metric}
 \rho(P,Q)=\arcsin\bigl(\norm{P-Q}\bigr)\quad\text{ with }\quad P,Q\ \text{ orthogonal projections in }\cH\,,
\end{equation}
defines a metric on the set of orthogonal projections in $\cH$, see \cite{Brown93}, and also \cite[Lemma 2.15]{AM13} and
\cite{MS10}. Using the triangle inequality for this metric, we obtain
\begin{equation}\label{eq:projTriangle}
 \arcsin\bigl(\norm{P_0-P_t}\bigr) \le \sum_{j=0}^n \arcsin\bigl(\norm{P_{t_j}-P_{t_{j+1}}}\bigr)\,.
\end{equation}
Considering $B_{t_{j+1}}$ as a perturbation of $B_{t_j}$, it is clear from \eqref{eq:localPert} and \eqref{eq:localSpecPert} that
each summand of the right-hand side of \eqref{eq:projTriangle} can be treated in the same way as the maximal angle in the general
situation discussed in Section \ref{sec:intro}. For example, combining \eqref{eq:localPert}--\eqref{eq:defLambda} with the bound
\eqref{eq:KMM} yields
\begin{equation}\label{eq:localKMM}
 \norm{P_{t_j}-P_{t_{j+1}}} \le \frac{\pi}{2}\,\frac{\lambda_j}{1-\lambda_j} = \frac{\pi}{2}\,\frac{t_{j+1}-t_j}{1-t_j-t_{j+1}} \le
 \frac{\pi}{2}\,\frac{t_{j+1}-t_j}{1-2t_{j+1}}\,,\quad j=0,\dots,n\,,
\end{equation}
where we have taken into account that $\norm{P_{t_j}-P_{t_{j+1}}}\le1$ and that $\frac{\pi}{2}\frac{\lambda_j}{1-\lambda_j}\ge 1$
if $\lambda_j\ge\frac{2}{2+\pi}$.

Obviously, the estimates \eqref{eq:projTriangle} and \eqref{eq:localKMM} hold for arbitrary finite partitions of the interval
$[0,t]$. In particular, if partitions with arbitrarily small mesh size are considered, then, as a result of
$\frac{t_{j+1}-t_j}{1-2t_{j+1}}\le\frac{t_{j+1}-t_j}{1-2t}$, the norm of each corresponding projector difference in
\eqref{eq:localKMM} is arbitrarily small as well. At the same time, the corresponding Riemann sums
\[
 \sum_{j=0}^n \frac{t_{j+1}-t_j}{1-2t_{j+1}}
\]
are arbitrarily close to the integral $\int_0^t\frac{1}{1-2\tau}\,\dd\tau$. Since $\frac{\arcsin(x)}{x}\to1$ as $x\to0$, we
conclude from \eqref{eq:projTriangle} and \eqref{eq:localKMM} that
\[
 \arcsin\bigl(\norm{P_0-P_t}\bigr) \le \frac{\pi}{2}\int_0^t \frac{1}{1-2\tau}\,\dd\tau=
 \frac{\pi}{4}\log\Bigl(\frac{1}{1-2t}\Bigr)\,.
\]
Once the bound \eqref{eq:KMM} has been generalized to the case where the operator $A$ is allowed to be unbounded, this argument is
an easy and straightforward way to prove the bound \eqref{eq:MS}.

Albeverio and Motovilov demonstrated in \cite{AM13} that a stronger result can be obtained from \eqref{eq:projTriangle}. They
considered a specific finite partition of the interval $[0,t]$ and used a suitable a priori bound (see
\cite[Corollary 4.3 and Remark 4.4]{AM13}) to estimate the corresponding summands of the right-hand side of
\eqref{eq:projTriangle}. This a priori bound, which is related to the Davis-Kahan $\sin2\Theta$ theorem from \cite{DK70}, is used
in the present work as well. We therefore state the corresponding result in the following proposition for future reference. It
should be noted that our formulation of the statement slightly differs from the original one in \cite{AM13}. A justification of
this modification, as well as a deeper discussion on the material including an alternative, straightforward proof of the original
result \cite[Corollary 4.3]{AM13}, can be found in \cite{Seel13}.

\begin{proposition}[{\cite[Corollary 2]{Seel13}}]\label{prop:genRotBound}
 Let $A$ and $V$ be as in Theorem \ref{thm:mainResult}. If $\norm{V}\le \frac{d}{\pi}$, then the spectral projections
 $\EE_{A}(\sigma)$ and $\EE_{A+V}\bigl(\cO_{d/2}(\sigma)\bigr)$ for the self-adjoint operators $A$ and $A+V$ associated with the
 Borel sets $\sigma$ and $\cO_{d/2}(\sigma)$, respectively, satisfy the estimate
 \[
  \arcsin\bigl(\norm{\EE_A(\sigma)-\EE_{A+V}\bigl(\cO_{d/2}(\sigma)\bigr)}\bigr) \le
  \frac{1}{2}\arcsin\Bigl(\frac{\pi}{2}\cdot 2\,\frac{\norm{V}}{d}\Bigr)\le\frac{\pi}{4}\,.
 \] 
\end{proposition}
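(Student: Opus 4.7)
The plan is to reduce the statement to a Davis-Kahan $\sin2\Theta$-type bound and then convert the resulting estimate on $\sin(2\theta)$ into one on $\theta$ via a homotopy argument. Setting $P:=\EE_A(\sigma)$ and $Q:=\EE_{A+V}\bigl(\cO_{d/2}(\sigma)\bigr)$, the hypothesis $\norm{V}\le d/\pi<d/2$ ensures via \eqref{eq:pertNormBound} that $Q$ is the spectral projection described in the statement, so $\theta:=\arcsin\norm{P-Q}\in[0,\pi/2]$ is the maximal angle to be controlled. Relative to the decomposition $\cH=\Ran P\oplus\Ran P^\perp$, I would write $A=A_0\oplus A_1$ with $\spec(A_0)=\sigma$ and $\spec(A_1)=\Sigma$, and represent $V$ as a $2\times 2$ block operator matrix whose off-diagonal block $W$ satisfies $\norm{W}\le\norm{V}$.

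The main step is the $\sin2\Theta$ estimate in its general (non-subordinated) form, which yields the sharp factor $\pi/2$ stemming from the Bhatia--Davis--McIntosh norm bound on the solution of the Sylvester/Rosenblum equation $A_0X-XA_1=W$ when $\dist(\spec A_0,\spec A_1)=d$. Applied in the present setting, this should produce
\[
 \sin(2\theta)\le\frac{\pi\norm{V}}{d}\le 1\,,
\]
the second inequality being the hypothesis. Taking $\arcsin$ of both sides to deduce $2\theta\le\arcsin(\pi\norm{V}/d)$ is, however, only legitimate on the branch $2\theta\in[0,\pi/2]$, so the a priori bound $\theta\le\pi/4$ must be secured first.

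To this end, I would apply the $\sin2\Theta$ estimate uniformly along the family $V_s:=sV$, $s\in[0,1]$, and invoke the norm continuity of the associated spectral projections (as cited just above the proposition) to see that $\theta_s:=\arcsin\norm{P-\EE_{A+V_s}\bigl(\cO_{d/2}(\sigma)\bigr)}$ depends continuously on $s\in[0,1]$ with $\theta_0=0$. The estimate then gives $\sin(2\theta_s)\le \pi s\norm{V}/d\le 1$, with strict inequality for $s<1$. By the intermediate value theorem, $\theta_s$ can reach $\pi/4$ only at a point where $\sin(2\theta_s)=1$, and by the above this forces $s=1$; hence $\theta_s\le\pi/4$ throughout $[0,1]$. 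Taking $\arcsin$ in the $\sin2\Theta$ inequality at $s=1$ then yields the assertion, with the terminal bound by $\pi/4$ arising from $\pi\norm{V}/d\le 1$.

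The principal obstacle is establishing the $\sin2\Theta$ estimate with the sharp constant $\pi/2$ in the general, non-subordinated configuration of $\sigma$ and $\Sigma$; this rests on the Bhatia--Davis--McIntosh norm bound for solutions of the Rosenblum equation and is essentially the content of \cite[Corollary 2]{Seel13}. The subsequent branch-selection by homotopy is technically delicate but conceptually straightforward once the $\sin2\Theta$ estimate is in hand.
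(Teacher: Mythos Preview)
The paper does not prove this proposition; it is stated as a citation of \cite[Corollary~2]{Seel13} (cf.\ also \cite[Corollary~4.3 and Remark~4.4]{AM13}), and the surrounding text explicitly defers the justification to \cite{Seel13}. So there is no ``paper's own proof'' to compare against here.

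That said, your outline is correct and matches the standard route taken in \cite{AM13} and \cite{Seel13}: first establish the $\sin 2\Theta$ estimate $\sin(2\theta)\le \pi\norm{V}/d$ in the general (non-subordinated) setting via the Bhatia--Davis--McIntosh constant $\pi/2$ for the Sylvester equation, and then use a continuity/homotopy argument along $s\mapsto A+sV$ to force the branch $\theta\le\pi/4$ before taking $\arcsin$. Your branch-selection argument is fine: if $\theta_{s_0}=\pi/4$ for some $s_0<1$ then $1=\sin(2\theta_{s_0})\le \pi s_0\norm{V}/d<1$, a contradiction; continuity and $\theta_0=0$ then give $\theta_s<\pi/4$ for $s<1$ and $\theta_1\le\pi/4$. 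You correctly flag that the substantive work lies in the $\sin 2\Theta$ bound with the sharp constant, which is precisely what \cite{Seel13} supplies.
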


The estimate given by Proposition \ref{prop:genRotBound} is universal in the sense that the estimating function
$x\mapsto\frac{1}{2}\arcsin(\pi x)$ depends neither on the unperturbed operator $A$ nor on the perturbation $V$. Moreover, for
perturbations $V$ satisfying $\norm{V}\le\frac{4}{\pi^2+4}\,d$, this a priori bound on the maximal angle between the corresponding
spectral subspaces is the strongest one available so far, cf.\ \cite[Remark 5.5]{AM13}.

Assume that the given partition of the interval $[0,t]$ additionally satisfies
\begin{equation}\label{eq:seqParamCond}
 \lambda_j = \frac{t_{j+1}-t_j}{1-2t_j}\le\frac{1}{\pi}\,,\quad j=0,\dots,n\,.
\end{equation}
In this case, it follows from \eqref{eq:localPert}, \eqref{eq:localSpecPert}, \eqref{eq:projTriangle}, and Proposition
\ref{prop:genRotBound} that
\begin{equation}\label{eq:essEstimate}
 \arcsin\bigl(\norm{P_0-P_t}\bigr) \le \frac{1}{2}\sum_{j=0}^n \arcsin(\pi\lambda_j)\,.
\end{equation}

Along with a specific choice of the partition of the interval $[0,t]$, estimate \eqref{eq:essEstimate} is the essence of the
approach by Albeverio and Motovilov in \cite{AM13}. In the present work, we optimize the choice of the partition of the interval
$[0,t]$, so that for every fixed parameter $t$ the right-hand side of inequality \eqref{eq:essEstimate} is minimized. An equivalent
and more convenient reformulation of this approach is to maximize the parameter $t$ in estimate \eqref{eq:essEstimate} over all
possible choices of the parameters $n$ and $\lambda_j$ for which the right-hand side of \eqref{eq:essEstimate} takes a fixed value.

Obviously, we can generalize estimate \eqref{eq:essEstimate} to the case where the finite sequence $(t_j)_{j=1}^n$ is allowed to be
just increasing and not necessarily strictly increasing. Altogether, this motivates the following considerations.

\begin{definition}\label{def:params}
 For $n\in\N_0$ define
 \[
  D_n:=\Bigl\{ (\lambda_j)\in l^1(\N_0) \Bigm| 0\le \lambda_j\le\frac{1}{\pi}\ \
  \text{ for }\ \ j\le n\ \ \text{ and }\ \ \lambda_j=0\ \ \text{ for }\ \ j\ge n+1 \Bigr\}\,,
 \]
 and let $D:=\bigcup_{n\in\N_0} D_n$.
\end{definition}

Every finite partition of the interval $[0,t]$ that satisfies condition \eqref{eq:seqParamCond} is related to a sequence in $D$ in
the obvious way. Conversely, the following lemma allows to regain the finite partition of the interval $[0,t]$ from this sequence.

\begin{lemma}\label{lem:seq}
 \hspace*{2cm}

 \begin{enumerate}
  \renewcommand{\theenumi}{\alph{enumi}}
  \item For every $x\in\bigl[0,\frac{1}{2}\bigr)$ the mapping $\bigl[0,\frac{1}{2}\bigr]\ni t\mapsto t+x(1-2t)$ is strictly
        increasing.
  \item For every $\lambda=(\lambda_j)\in D$ the sequence $(t_j)\subset\R$ given by the recursion
        \begin{equation}\label{eq:seqDef}
         t_{j+1} = t_j + \lambda_j (1-2t_j)\,,\quad j\in\N_0\,,\quad t_0=0\,,
        \end{equation}
        is increasing and satisfies $0\le t_j<\frac{1}{2}$ for all $j\in\N_0$. Moreover, one has $t_j=t_{n+1}$ for $j\ge n+1$ if
        $\lambda\in D_n$. In particular, $(t_j)$ is eventually constant.
 \end{enumerate}
 
 \begin{proof}
  The proof of claim (a) is straightforward and is hence omitted.

  For the proof of (b), let $\lambda=(\lambda_j)\in D$ be arbitrary and let $(t_j)\subset\R$ be given by \eqref{eq:seqDef}. Observe
  that $t_0=0<\frac{1}{2}$ and that (a) implies that
  \[
   0 \le t_{j+1}=t_j + \lambda_j(1-2t_j) < \frac{1}{2} + \lambda_j\Bigl(1-2\cdot\frac{1}{2}\Bigr) = \frac{1}{2}
   \quad\text{ if }\quad 0\le t_j<\frac{1}{2}\,.
  \]
  Thus, the two-sided estimate $0\le t_j<\frac{1}{2}$ holds for all $j\in\N_0$ by induction. In particular, it follows that
  $t_{j+1}-t_j=\lambda_j(1-2t_j)\ge 0$ for all $j\in\N_0$, that is, the sequence $(t_j)$ is increasing. Let $n\in\N_0$ such that
  $\lambda\in D_n$. Since $\lambda_j=0$ for $j\ge n+1$, it follows from the definition of $(t_j)$ that $t_{j+1}=t_j$ for
  $j\ge n+1$, that is, $t_j=t_{n+1}$ for $j\ge n+1$.
 \end{proof}%
\end{lemma}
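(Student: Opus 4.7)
The plan is to handle part (a) by a one-line linearity observation, and then use it inductively for part (b).

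For part (a), I would rewrite $t + x(1-2t) = (1-2x)t + x$, which is an affine function of $t$ with slope $1-2x$. Since the hypothesis $0 \le x < \tfrac{1}{2}$ forces $1-2x > 0$, strict monotonicity is immediate; no further calculation is required.

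For part (b), let $\lambda = (\lambda_j) \in D_n$ be fixed and let $(t_j)$ be defined by the recursion \eqref{eq:seqDef}. I would prove the two-sided bound $0 \le t_j < \tfrac{1}{2}$ by induction on $j$. The base case $t_0 = 0$ is trivial. For the inductive step, assume $0 \le t_j < \tfrac{1}{2}$. Nonnegativity of $t_{j+1}$ follows from $t_{j+1} - t_j = \lambda_j(1-2t_j) \ge 0$, since $\lambda_j \ge 0$ and $1 - 2t_j > 0$ by the inductive hypothesis; this simultaneously shows that $(t_j)$ is increasing. For the upper bound, apply part (a) with $x = \lambda_j \le \tfrac{1}{\pi} < \tfrac{1}{2}$: the strictly increasing map $s \mapsto s + \lambda_j(1-2s)$ sends the value $t_j < \tfrac{1}{2}$ to a value strictly below its value at $\tfrac{1}{2}$, which equals $\tfrac{1}{2} + \lambda_j(1 - 2\cdot\tfrac{1}{2}) = \tfrac{1}{2}$. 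Hence $t_{j+1} < \tfrac{1}{2}$, closing the induction.

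Finally, for $j \ge n+1$ the definition of $D_n$ gives $\lambda_j = 0$, so the recursion collapses to $t_{j+1} = t_j$, yielding $t_j = t_{n+1}$ for all $j \ge n+1$, which is the claimed eventual constancy. There is essentially no obstacle here; the only subtlety worth flagging is that the strict inequality $t_j < \tfrac{1}{2}$ (as opposed to $t_j \le \tfrac{1}{2}$) must be carried through the induction, which is precisely what the strict monotonicity in part (a) delivers.
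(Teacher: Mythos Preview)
Your proof is correct and follows essentially the same approach as the paper: both use part (a) to push the strict upper bound $t_j<\tfrac{1}{2}$ through an induction, derive monotonicity from $\lambda_j(1-2t_j)\ge 0$, and read off eventual constancy from $\lambda_j=0$ for $j\ge n+1$. Your explicit rewriting $t+x(1-2t)=(1-2x)t+x$ for part (a) is exactly the ``straightforward'' argument the paper omits.
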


It follows from part (b) of the preceding lemma that for every $\lambda\in D$ the sequence $(t_j)$ given by \eqref{eq:seqDef}
yields a finite partition of the interval $[0,t]$ with $t=\max_{j\in\N_0}t_j<\frac{1}{2}$. In this respect, the approach to
optimize the parameter $t$ in \eqref{eq:essEstimate} with a fixed right-hand side can now be formalized in the following way.

\begin{definition}\label{def:optProb}
 Let $W\colon D\to l^\infty(\N_0)$ denote the (non-linear) operator that maps every sequence in $D$ to the corresponding increasing
 and eventually constant sequence given by the recursion \eqref{eq:seqDef}. Moreover, let
 $M\colon\bigl[0,\frac{1}{\pi}\bigr]\to\bigl[0,\frac{\pi}{4}\bigr]$ be given by
 \[
  M(x):=\frac{1}{2}\arcsin(\pi x)\,.
 \]
 Finally, for $\theta\in\bigl[0,\frac{\pi}{2}\bigr]$ define
 \[
  D(\theta):=\biggl\{(\lambda_j)\in D \biggm| \sum_{j=0}^\infty M(\lambda_j)=\theta\biggr\}\subset D
 \]
 and
 \begin{equation}\label{eq:optProb}
  T(\theta):=\sup\bigl\{\max W(\lambda) \bigm| \lambda\in D(\theta)\bigr\}\,,
 \end{equation}
 where $\max W(\lambda):=\max_{j\in\N_0}t_j$ with $(t_j)=W(\lambda)$.
\end{definition}

For every fixed $\theta\in\bigl[0,\frac{\pi}{2}\bigr]$, it is easy to verify that indeed $D(\theta)\neq\emptyset$. Moreover, one
has $0\le T(\theta)\le\frac{1}{2}$ by part (b) of Lemma \ref{lem:seq}, and $T(\theta)=0$ holds if and only if $\theta=0$. In order
to compute $T(\theta)$ for $\theta\in\bigl(0,\frac{\pi}{2}\bigr]$, we have to maximize $\max W(\lambda)$ over
$\lambda\in D(\theta)\subset D$. This constrained optimization problem plays the central role in the approach presented in this
work.

The following proposition shows how this optimization problem is related to the problem of estimating the maximal angle between the
corresponding spectral subspaces.

\begin{proposition}\label{prop:mainEstimate}
 Assume Hypothesis \ref{app:hypHyp}. Let
 $\bigl[0,\frac{\pi}{2}\bigr]\ni\theta\mapsto S(\theta)\in\bigl[0,S\bigl(\frac{\pi}{2}\bigr)\bigr]\subset\bigl[0,\frac{1}{2}\bigr]$
 be a continuous, strictly increasing (hence invertible) mapping with
 \[
  0\le S(\theta) \le T(\theta)\quad\text{ for }\quad 0\le \theta<\frac{\pi}{2}\,.
 \]
 Then
 \[
  \arcsin\bigl(\norm{P_0-P_t}\bigr) \le S^{-1}(t)\quad\text{ for }\quad 0\le t< S\Bigl(\frac{\pi}{2}\Bigr)\,.
 \]
 
 \begin{proof}
  Since the mapping $\theta\mapsto S(\theta)$ is invertible, it suffices to show the inequality
  \begin{equation}\label{eq:mainEstForS}
   \arcsin\bigl(\norm{P_0-P_{S(\theta)}}\bigr) \le \theta\quad\text{ for }\quad 0\le\theta<\frac{\pi}{2}\,.
  \end{equation}

  Considering $T(0)=S(0)=0$, the case $\theta=0$ in inequality \eqref{eq:mainEstForS} is obvious. Let
  $\theta\in\bigl(0,\frac{\pi}{2}\bigr)$. In particular, one has $T(\theta)>0$. For arbitrary $t$ with $0\le t<T(\theta)$ choose
  $\lambda=(\lambda_j)\in D(\theta)$ such that $t<\max W(\lambda)\le T(\theta)$. Denote $(t_j):=W(\lambda)$. Since
  $t_j<\frac{1}{2}$ for all $j\in\N_0$ by part (b) of Lemma \ref{lem:seq}, it follows from the definition of $(t_j)$ that
  \begin{equation}\label{eq:locSeqI}
   \frac{t_{j+1}-t_j}{1-2t_j}=\lambda_j\le\frac{1}{\pi} \quad\text{ for all }\quad j\in\N_0\,.
  \end{equation}
  Moreover, considering $t<\max W(\lambda)=\max_{j\in\N_0}t_j$, there is $k\in\N_0$ such that $t_k\le t< t_{k+1}$. In particular,
  one has
  \begin{equation}\label{eq:locSeqII}
   \frac{t-t_k}{1-2t_k} < \frac{t_{k+1}-t_k}{1-2t_k} = \lambda_k\le\frac{1}{\pi}\,.
  \end{equation}
  Using the triangle inequality for the metric $\rho$ given by \eqref{eq:metric}, it follows from \eqref{eq:localPert},
  \eqref{eq:localSpecPert}, \eqref{eq:locSeqI}, \eqref{eq:locSeqII}, and Proposition \ref{prop:genRotBound} that
  \[
   \begin{aligned}
    \arcsin\bigl(\norm{P_0-P_t}\bigr)
    &\le \sum_{j=0}^{k-1}\arcsin\bigl(\norm{P_{t_j}-P_{t_{j+1}}}\bigr) + \arcsin\bigl(\norm{P_{t_k}-P_t}\bigr)\\
    &\le \sum_{j=0}^{k-1} M(\lambda_j) + M(\lambda_k) \le \sum_{j=0}^\infty M(\lambda_j)=\theta\,,
   \end{aligned}
  \]
  that is,
  \begin{equation}\label{eq:mainEstFort}
   \arcsin\bigl(\norm{P_0-P_t}\bigr) \le \theta\quad\text{ for all }\quad 0\le t < T(\theta)\,.
  \end{equation}
  Since the mapping $\bigl[0,\frac{1}{2}\bigr)\ni\tau\mapsto P_\tau$ is norm continuous and
  $S(\theta)<S\bigl(\frac{\pi}{2}\bigr)\le\frac{1}{2}$, estimate \eqref{eq:mainEstFort} also holds for $t=S(\theta)\le T(\theta)$.
  This shows \eqref{eq:mainEstForS} and, hence, completes the proof.
 \end{proof}%
\end{proposition}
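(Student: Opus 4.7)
The plan is to reduce the statement, via the invertibility of $S$, to proving the equivalent bound
\[
 \arcsin\bigl(\norm{P_0-P_{S(\theta)}}\bigr)\le\theta \quad\text{for every}\quad 0\le\theta<\frac{\pi}{2}.
\]
The case $\theta=0$ is immediate since $S(0)=0$. For $\theta\in\bigl(0,\frac{\pi}{2}\bigr)$ I would first establish the slightly stronger open-interval estimate $\arcsin(\norm{P_0-P_t})\le\theta$ for all $t\in[0,T(\theta))$, and then extend it to the endpoint $t=S(\theta)\le T(\theta)$ at the end by invoking the norm continuity of the map $\tau\mapsto P_\tau$ on $\bigl[0,\frac{1}{2}\bigr)$ recorded in the paragraph following Hypothesis \ref{app:hypHyp}.

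To prove the open-interval estimate, I would exploit that $T(\theta)$ is defined as a supremum: for $t<T(\theta)$ I can pick $\lambda=(\lambda_j)\in D(\theta)$ with $t<\max W(\lambda)$. Writing $(t_j):=W(\lambda)$, this sequence is increasing, eventually constant, and strictly bounded above by $\frac{1}{2}$ by Lemma \ref{lem:seq}(b), so there is a unique index $k$ with $t_k\le t<t_{k+1}$. Combining this with the triangle inequality for the arcsin metric $\rho$ on orthogonal projections yields
\[
 \arcsin\bigl(\norm{P_0-P_t}\bigr)\le\sum_{j=0}^{k-1}\arcsin\bigl(\norm{P_{t_j}-P_{t_{j+1}}}\bigr)+\arcsin\bigl(\norm{P_{t_k}-P_t}\bigr),
\]
reducing the problem to controlling each summand locally.

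The individual estimates come directly from Proposition \ref{prop:genRotBound} applied to $B_{t_{j+1}}$ viewed as a perturbation of $B_{t_j}$: the relations \eqref{eq:localPert}--\eqref{eq:localSpecPert} together with the bound $\lambda_j\le\frac{1}{\pi}$ built into the definition of $D$ guarantee that the relative perturbation strength $\norm{B_{t_{j+1}}-B_{t_j}}/\delta_{t_j}$ does not exceed $\frac{1}{\pi}$, so the proposition yields $\arcsin(\norm{P_{t_j}-P_{t_{j+1}}})\le M(\lambda_j)$. For the final partial piece $[t_k,t]$, the relative strength $(t-t_k)/(1-2t_k)$ is strictly below $\lambda_k$, so the monotonicity of $M$ delivers the same bound $\arcsin(\norm{P_{t_k}-P_t})\le M(\lambda_k)$ without any extra work. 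Summing and invoking the constraint $\lambda\in D(\theta)$ gives exactly $\sum_{j=0}^\infty M(\lambda_j)=\theta$, closing the open-interval estimate.

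The main subtlety I anticipate is conceptual rather than computational: one must treat the final partial block $[t_k,t]$ separately from the full blocks $[t_j,t_{j+1}]$ (both handled by the same local rotation bound, but for different reasons), and one must argue the transition from the open-interval estimate to the endpoint $t=S(\theta)$ because the argument does not a priori extend to $t=T(\theta)$. The latter is a straightforward continuity passage, since $S(\theta)<S(\pi/2)\le\frac{1}{2}$ keeps $P_{S(\theta)}$ well-defined and norm continuity of $\tau\mapsto P_\tau$ permits taking the limit $t\nearrow S(\theta)$.
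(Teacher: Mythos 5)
Your proposal is correct and follows essentially the same route as the paper's own proof: the reduction to the bound $\arcsin(\norm{P_0-P_{S(\theta)}})\le\theta$, the choice of a near-optimal $\lambda\in D(\theta)$, the split into full blocks $[t_j,t_{j+1}]$ plus a final partial block $[t_k,t]$ handled via Proposition \ref{prop:genRotBound} and monotonicity of $M$, and the closing continuity argument to reach $t=S(\theta)$ all match the paper's argument step by step.
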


It turns out that the mapping $\bigl[0,\frac{\pi}{2}\bigr]\ni\theta\mapsto T(\theta)$ is continuous and strictly increasing. It
therefore satisfies the hypotheses of Proposition \ref{prop:mainEstimate}. In this respect, it remains to compute $T(\theta)$ for
$\theta\in\bigl[0,\frac{\pi}{2}\bigr]$ in order to prove Theorem \ref{thm:mainResult}. This is done in Section \ref{sec:solOptProb}
below. For convenience, the following theorem states the corresponding result in advance.

\begin{theorem}\label{thm:solOptProb}
 In the interval $\bigl(0,\frac{\pi}{2}\bigr]$ the equation
 \[
  \Bigl(1-\frac{2}{\pi}\sin\vartheta\Bigr)^2 = \biggl(1-\frac{2}{\pi}\sin\Bigl(\frac{2\vartheta}{3}\Bigr)\biggr)^3
 \]
 has a unique solution $\vartheta\in\bigl(\arcsin\bigl(\frac{2}{\pi}\bigr),\frac{\pi}{2}\bigr)$. Moreover, the quantity $T(\theta)$
 given in \eqref{eq:optProb} has the representation
 \begin{equation}\label{eq:solOptProb}
  T(\theta) = \begin{cases}
               \frac{1}{\pi}\sin(2\theta) & \text{ for }\quad 0 \le \theta \le \arctan\bigl(\frac{2}{\pi}\bigr)=\frac{1}{2}
                  \arcsin\bigl(\frac{4\pi}{\pi^2+4}\bigr)\,,\\[0.1cm]
               \frac{2}{\pi^2}+\frac{\pi^2-4}{2\pi^2}\sin^2\theta & \text{ for }\quad \arctan\bigl(\frac{2}{\pi}
                  \bigr)<\theta<\arcsin\bigl(\frac{2}{\pi}\bigr)\,,\\[0.1cm]
               \frac{1}{2}-\frac{1}{2}\bigl(1-\frac{2}{\pi}\sin\theta\bigr)^2 & \text{ for }\quad
                  \arcsin\bigl(\frac{2}{\pi}\bigr)\le\theta\le\vartheta\,,\\[0.1cm]
               \frac{1}{2}-\frac{1}{2}\Bigl(1-\frac{2}{\pi}\sin\bigl(\frac{2\theta}{3}\bigr)\Bigr)^3 &\text{ for }\quad
                  \vartheta < \theta \le \frac{\pi}{2}\,.
              \end{cases}
 \end{equation}
 The mapping $\bigl[0,\frac{\pi}{2}\bigr]\ni\theta\mapsto T(\theta)$ is strictly increasing, continuous on
 $\bigl[0,\frac{\pi}{2}\bigr]$, and continuous differentiable on $\bigl(0,\frac{\pi}{2}\bigr)\setminus\{\vartheta\}$.
\end{theorem}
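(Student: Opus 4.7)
The plan is to convert the optimization \eqref{eq:optProb} into a cleaner product-minimization problem, analyse it via convexity, and identify four explicit extremal configurations that correspond to the four regimes of \eqref{eq:solOptProb}.

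As a first step, setting $u_{j}:=1-2t_{j}$ turns the recursion \eqref{eq:seqDef} into $u_{j+1}=(1-2\lambda_{j})u_{j}$ with $u_{0}=1$, so that $1-2t_{n+1}=\prod_{j=0}^{n}(1-2\lambda_{j})$. The substitution $\mu_{j}:=2M(\lambda_{j})=\arcsin(\pi\lambda_{j})\in\bigl[0,\tfrac{\pi}{2}\bigr]$ identifies $D(\theta)$ (modulo trailing zeros) with the finitely supported sequences in $\bigl[0,\tfrac{\pi}{2}\bigr]$ summing to $2\theta$, and turns $1-2\lambda_{j}$ into $f(\mu_{j}):=1-\tfrac{2}{\pi}\sin\mu_{j}$. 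Maximizing $\max W(\lambda)$ over $\lambda\in D(\theta)$ is therefore equivalent to minimizing $\prod_{j=0}^{n}f(\mu_{j})$, or equivalently maximizing $\sum g(\mu_{j})$ with $g:=-\log f$, over all $n\in\N_{0}$ and $(\mu_{0},\dots,\mu_{n})\in\bigl[0,\tfrac{\pi}{2}\bigr]^{n+1}$ with $\sum_{j=0}^{n}\mu_{j}=2\theta$.

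A direct calculation yields
\[
 g''(\mu)=\frac{(2/\pi)\bigl((2/\pi)-\sin\mu\bigr)}{\bigl(1-(2/\pi)\sin\mu\bigr)^{2}},
\]
so $g$ is strictly convex on $[0,c]$ and strictly concave on $[c,\tfrac{\pi}{2}]$ with $c:=\arcsin(2/\pi)$; correspondingly, $g'$ increases on $[0,c]$ from $\tfrac{2}{\pi}$ to $\tfrac{2}{\sqrt{\pi^{2}-4}}$ and then decreases back to $0$ on $[c,\tfrac{\pi}{2}]$. A Lagrange/KKT analysis on each fixed-$n$ subproblem shows that at any optimum the nonzero $\mu_{j}$'s lie in the preimage $\{g'=\lambda\}$ for a common multiplier $\lambda\ge 0$; since $g'$ is unimodal, this preimage has at most two elements, so the nonzero $\mu_{j}$'s take at most two distinct interior values, possibly supplemented by boundary values $\mu_{j}=\tfrac{\pi}{2}$. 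Enumerating the resulting candidates leads to precisely four extremal families: (i) a single step $\mu_{0}=2\theta$, feasible only for $\theta\le\tfrac{\pi}{4}$, producing $T=\tfrac{1}{\pi}\sin(2\theta)$; (ii) two equal steps $\mu_{0}=\mu_{1}=\theta$, producing $T=\tfrac{1}{2}\bigl(1-(1-\tfrac{2}{\pi}\sin\theta)^{2}\bigr)$; (iii) two asymmetric steps $\mu_{0}+\mu_{1}=2\theta$ satisfying the critical-point condition $\cos\tfrac{\mu_{0}-\mu_{1}}{2}=\tfrac{\pi}{2}\sin\theta$, feasible only on $[\arctan(2/\pi),\arcsin(2/\pi)]$, which after a short trigonometric simplification yields $1-2T=(1-4/\pi^{2})\cos^{2}\theta$, matching the regime-2 formula; and (iv) three equal steps $\mu_{j}=2\theta/3$, producing $T=\tfrac{1}{2}\bigl(1-(1-\tfrac{2}{\pi}\sin(2\theta/3))^{3}\bigr)$. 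Comparing these four values of $\prod f(\mu_{j})$—using, in particular, the identities $1-(2/\pi)\sin(2\theta)-(1-4/\pi^{2})\cos^{2}\theta=(\sin\theta-(2/\pi)\cos\theta)^{2}$ and $(1-\tfrac{2}{\pi}\sin\theta)^{2}-(1-4/\pi^{2})\cos^{2}\theta=(\sin\theta-\tfrac{2}{\pi})^{2}$—pins down the transitions exactly at $\arctan(2/\pi)$, $\arcsin(2/\pi)$, and at the unique $\vartheta$ solving $(1-\tfrac{2}{\pi}\sin\vartheta)^{2}=(1-\tfrac{2}{\pi}\sin(2\vartheta/3))^{3}$; the existence, uniqueness, and location of $\vartheta$ in $(\arcsin(2/\pi),\tfrac{\pi}{2})$ then follow from a monotonicity analysis of the two sides together with the boundary values at $0$ and $\tfrac{\pi}{2}$. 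Continuity of $T$ on $\bigl[0,\tfrac{\pi}{2}\bigr]$ and $C^{1}$-regularity off $\{\vartheta\}$ drop out of the four explicit pieces by inspection.

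The main obstacle is the systematic exclusion of the remaining candidate configurations, especially those with $n\ge 3$ nonzero $\mu_{j}$'s and those in which one or several $\mu_{j}$'s are pinned at the boundary $\tfrac{\pi}{2}$. The natural tool is a symmetrization/splitting argument showing that any equally-spaced configuration with at least four steps is strictly dominated by the three-step configuration on the whole range $(0,\tfrac{\pi}{2}]$, and that a boundary step $\mu_{j}=\tfrac{\pi}{2}$ can always be absorbed into one of the four listed families by merging it with a neighbouring step. These reductions are expected to rely on the elementary inequalities collected in Appendix \ref{app:sec:inequalities}.
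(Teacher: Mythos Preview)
Your reformulation via $g=-\log f$ and the unimodality of $g'$ is clean and leads to the same critical-point structure that the paper obtains algebraically in Lemma~\ref{lem:critPoints}; your two perfect-square identities also give slicker derivations of the crossovers at $\arctan(2/\pi)$ and $\arcsin(2/\pi)$ than the corresponding parts of Lemma~\ref{app1:lem1}. So the setup and the identification of the four winning regimes are correct and in places tidier than the paper.

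The gap is exactly the part you label ``the main obstacle'', and it is where essentially all of the paper's technical effort lies. Two concrete points. First, your enumeration is incomplete even at the level of KKT candidates: for three nonzero steps the stationarity conditions produce, besides the equal configuration (iv), a genuine interior critical point with a $2{+}1$ split $\mu_0=\mu_1\neq\mu_2$ (case~\eqref{eq:critPointsNotEqual} of Lemma~\ref{lem:critPoints} with $l=n-1$); ruling this out is not a symmetrization and requires the nontrivial inequality of Lemma~\ref{app:lem3}, whose proof takes up most of the appendix. Second, the tool the paper actually uses to kill the higher-$n$ candidates is not a splitting argument but the truncation principle of Lemma~\ref{lem:paramSubst}: if $\lambda$ is optimal in $D_n(\theta)$, then every initial segment of $\lambda$---in any ordering, by Lemma~\ref{lem:paramPermut}---is optimal for the corresponding lower-dimensional subproblem. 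Once $T_1$ and $T_2$ are known, this recursively forces $T_n=T_2$ for all $n\ge 2$ (Proposition~\ref{prop:solOptProb}). You do not mention this mechanism, and a direct symmetrization is delicate here because $g$ is neither globally convex nor concave. Finally, the uniqueness of $\vartheta$ is not a ``monotonicity analysis of the two sides'': both sides of the defining equation are strictly decreasing on $(0,\pi/2]$, and the paper's Lemma~\ref{app:lem2} has to go to fourth derivatives to isolate a single crossing.
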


Theorem \ref{thm:mainResult} is now a straightforward consequence of Proposition \ref{prop:mainEstimate} and Theorem
\ref{thm:solOptProb}.

\begin{proof}[Proof of Theorem \ref{thm:mainResult}]
 According to Theorem \ref{thm:solOptProb}, the mapping $\bigl[0,\frac{\pi}{2}\bigr]\ni\theta\mapsto T(\theta)$ is strictly
 increasing and continuous. Hence, its range is the whole interval $[0,c_\mathrm{crit}]$, where $c_\mathrm{crit}$ is given by
 $c_\mathrm{crit}=T\bigl(\frac{\pi}{2}\bigr)=\frac{1}{2}-\frac{1}{2}\bigl(1-\frac{\sqrt{3}}{\pi}\bigl)^3$. Let
 $N=T^{-1}\colon[0,c_\mathrm{crit}]\to\bigl[0,\frac{\pi}{2}\bigr]$ denote the inverse of this mapping.
 
 Obviously, the function $N$ is also strictly increasing and continuous. Moreover, using representation \eqref{eq:solOptProb}, it
 is easy to verify that $N$ is explicitly given by \eqref{eq:mainResultFunc}. In particular, the constant
 $\kappa=T(\vartheta)=\frac{1}{2}-\frac{1}{2}\bigl(1-\frac{2}{\pi}\sin\vartheta\bigr)^2\in
 \bigl(4\frac{\pi^2-2}{\pi^4},2\frac{\pi-1}{\pi^2}\bigr)$ is the unique solution to equation \eqref{eq:kappa} in the interval
 $\bigl(0,2\frac{\pi-1}{\pi^2}\bigr]$. Furthermore, the function $N$ is continuously differentiable on
 $(0,c_\mathrm{crit})\setminus\{\kappa\}$ since the mapping $\theta\mapsto T(\theta)$ is continuously differentiable on
 $\bigl(0,\frac{\pi}{2}\bigr)\setminus\{\vartheta\}$.

 Let $V$ be a bounded self-adjoint operator on $\cH$ satisfying $\norm{V}<c_\mathrm{crit}\cdot d$. The case $V=0$ is obvious.
 Assume that $V\neq0$. Then, $B_t:=A+td\frac{V}{\norm{V}}$, $\Dom(B_t):=\Dom(A)$, and $P_t:=\EE_{B_t}\bigl(\cO_{d/2}(\sigma)\bigr)$
 for $0\le t<\frac{1}{2}$ satisfy Hypothesis \ref{app:hypHyp}. Moreover, one has $A+V=B_\tau$ with
 $\tau = \frac{\norm{V}}{d}<c_\mathrm{crit}=T\bigl(\frac{\pi}{2}\bigr)$. Applying Proposition \ref{prop:mainEstimate} to the
 mapping $\theta\mapsto T(\theta)$ finally gives
 \begin{equation}\label{eq:finalEstimate}
  \arcsin\bigl(\norm{E_A(\sigma)-E_{A+V}\bigl(\cO_{d/2}(\sigma)\bigr)}\bigr)=\arcsin\bigl(\norm{P_0-P_\tau}\bigr) \le N(\tau)
  = N\Bigl(\frac{\norm{V}}{d}\Bigr)\,,
 \end{equation}
 which completes the proof.
\end{proof}%

\begin{remark}\label{rem:kappaRepl}
 Numerical evaluations give $\vartheta=1{.}1286942\ldots<\arcsin\bigl(\frac{4\pi}{\pi^2+4}\bigr)=2\arctan\bigl(\frac{2}{\pi}\bigr)$
 and $\kappa = T(\vartheta) = 0{.}4098623\ldots<\frac{8\pi^2}{(\pi^2+4)^2}$.

 However, the estimate \eqref{eq:finalEstimate} remains valid if the constant $\kappa$ in the explicit representation for the
 function $N$ is replaced by any other constant within the interval $\bigl(4\frac{\pi^2-2}{\pi^4},2\frac{\pi-1}{\pi^2}\bigr)$. This
 can be seen by applying Proposition \ref{prop:mainEstimate} to each of the two mappings
 \[
  \theta\mapsto \frac{1}{2}-\frac{1}{2}\Bigl(1-\frac{2}{\pi}\sin\theta\Bigr)^2\quad\text{ and }\quad
  \theta\mapsto \frac{1}{2}-\frac{1}{2}\biggl(1-\frac{2}{\pi}\sin\Bigl(\frac{2\theta}{3}\Bigr)\biggr)^3\,.
 \]
 These mappings indeed satisfy the hypotheses of Proposition \ref{prop:mainEstimate}. Both are obviously continuous and strictly
 increasing, and, by particular choices of $\lambda\in D(\theta)$, it is easy to see from the considerations in Section
 \ref{sec:solOptProb} that they are less or equal to $T(\theta)$, see equation \eqref{eq:limitEquiParam} below.
\end{remark}

The statement of Theorem \ref{thm:solOptProb} actually goes beyond that of Theorem \ref{thm:mainResult}. As a matter of fact,
instead of equality in \eqref{eq:solOptProb}, it would be sufficient for the proof of Theorem \ref{thm:mainResult} to have that the
right-hand side of \eqref{eq:solOptProb} is just less or equal to $T(\theta)$. This, in turn, is rather easy to establish by
particular choices of $\lambda\in D(\theta)$, see Lemma \ref{lem:critPoints} and the proof of Lemma \ref{lem:twoParams} below.

However, Theorem \ref{thm:solOptProb} states that the right-hand side of \eqref{eq:solOptProb} provides an exact representation for
$T(\theta)$, and most of the considerations in Section \ref{sec:solOptProb} are required to show this stronger result. As a
consequence, the bound from Theorem \ref{thm:mainResult} is optimal within the framework of the approach by estimate
\eqref{eq:essEstimate}.

In fact, the following observation shows that a bound substantially stronger than the one from Proposition \ref{prop:genRotBound}
is required, at least for small perturbations, in order to improve on Theorem \ref{thm:mainResult}.

\begin{remark}
 One can modify the approach \eqref{eq:essEstimate} by replacing the term $M(\lambda_j)=\frac{1}{2}\arcsin(\pi\lambda_j)$ by
 $N(\lambda_j)$ and relaxing the condition \eqref{eq:seqParamCond} to $\lambda_j\le c_{\text{crit}}$. Yet, it follows from Theorem
 \ref{thm:solOptProb} that the corresponding optimization procedure leads to exactly the same result \eqref{eq:solOptProb}. This
 can be seen from the fact that each $N(\lambda_j)$ is of the form of the right-hand side of \eqref{eq:essEstimate} (cf.\ the
 computation of $T(\theta)$ in Section \ref{sec:solOptProb} below), so that we are actually dealing with essentially the same
 optimization problem. In this sense, the function $N$ is a fixed point in the approach presented here.
\end{remark}

We close this section with a comparison of Theorem \ref{thm:mainResult} with the strongest previously known result by Albeverio and
Motovilov from \cite{AM13}.

\begin{remark}\label{rem:estOptimality}
 One has $N(x)=M_*(x)$ for $0\le x\le \frac{4}{\pi^2+4}$, and the inequality $N(x)<M_*(x)$ holds for all
 $\frac{4}{\pi^2+4}<x\le c_*$, where $c_*\in\bigl(0,\frac{1}{2}\bigr)$ and $M_*\colon[0,c_*]\to\bigl[0,\frac{\pi}{2}\bigr]$ are
 given by \eqref{eq:AMConst} and \eqref{eq:AMFunc}, respectively. Indeed, it follows from the computation of $T(\theta)$ in Section
 \ref{sec:solOptProb} (see Remark \ref{rem:AMvsS} below) that
 \[
  x < T(M_*(x))\le c_\mathrm{crit}\quad\text{ for }\quad \frac{4}{\pi^2+4} < x \le c_*\,.
 \]
 Since the function $N=T^{-1}\colon[0,c_{\mathrm{crit}}]\to\bigl[0,\frac{\pi}{2}\bigr]$ is strictly increasing, this implies that
 \[
  N(x)< N\bigl(T(M_*(x))\bigr)=M_*(x)\quad \text{ for }\quad \frac{4}{\pi^2+4}<x\le c_*\,.
 \]
\end{remark}

\section{Proof of Theorem \ref{thm:solOptProb}}\label{sec:solOptProb}
We split the proof of Theorem \ref{thm:solOptProb} into several steps. We first reduce the problem of computing $T(\theta)$ to the
problem of solving suitable finite-dimensional constrained optimization problems, see equations \eqref{eq:supOptn} and
\eqref{eq:extremalProb}. The corresponding critical points are then characterized in Lemma \ref{lem:critPoints} using Lagrange
multipliers. The crucial tool to reduce the set of relevant critical points is provided by Lemma \ref{lem:paramSubst}. Finally, the
finite-dimensional optimization problems are solved in Lemmas \ref{lem:twoParams}, \ref{lem:threeParams}, and
\ref{prop:solOptProb}.

Throughout this section, we make use of the notations introduced in Definitions \ref{def:params} and \ref{def:optProb}. In
addition, we fix the following notations.

\begin{definition}
 For $n\in\N_0$ and $\theta\in\bigl[0,\frac{\pi}{2}\bigr]$ define $D_n(\theta):=D(\theta)\cap D_n$. Moreover, let
 \[
  T_n(\theta):=\sup\bigl\{\max W(\lambda) \bigm| \lambda\in D_n(\theta)\bigr\}\quad\text{ if }\quad D_n(\theta)\neq\emptyset\,,
 \]
 and set $T_n(\theta):=0$ if $D_n(\theta)=\emptyset$.
\end{definition}

As a result of $D(0)=D_n(0)=\{0\}\subset l^1(\N_0)$, we have $T(0)=T_n(0)=0$ for every $n\in\N_0$. Let
$\theta\in\bigl(0,\frac{\pi}{2}\bigr]$ be arbitrary. Since $D_0(\theta) \subset D_1(\theta) \subset D_2(\theta)\subset \dots$, we
obtain
\[
 T_0(\theta) \le T_1(\theta) \le T_2(\theta) \le \dots
\]
Moreover, we observe that
\begin{equation}\label{eq:supOptn}
 T(\theta)=\sup_{n\in\N_0}T_n(\theta)\,.
\end{equation}
In fact, we show below that $T_n(\theta)=T_2(\theta)$ for every $n\ge 2$, so that $T(\theta)=T_2(\theta)$, see Lemma
\ref{prop:solOptProb}.

Let $n\in\N$ be arbitrary and let $\lambda=(\lambda_j)\in D_n$. Denote $(t_j):=W(\lambda)$. It follows from part (b) of Lemma
\ref{lem:seq} that $\max W(\lambda)=t_{n+1}$. Moreover, we have
\[
 1-2t_{j+1} = 1-2t_j -2\lambda_j(1-2t_j) = (1-2t_j)(1-2\lambda_j)\,,\quad j=0,\dots,n\,.
\]
Since $t_0=0$, this implies that
\[
 1-2t_{n+1} = \prod_{j=0}^n (1-2\lambda_j)\,.
\]
In particular, we obtain the explicit representation
\begin{equation}\label{eq:seqExplRepr}
 \max W(\lambda)=t_{n+1} = \frac{1}{2}\biggl(1-\prod_{j=0}^n(1-2\lambda_j)\biggr)\,.
\end{equation}

An immediate conclusion of representation \eqref{eq:seqExplRepr} is the following statement.

\begin{lemma}\label{lem:paramPermut}
 For $\lambda=(\lambda_j)\in D_n$ the value of $\max W(\lambda)$ does not depend on the order of the entries
 $\lambda_0,\dots,\lambda_n$.
\end{lemma}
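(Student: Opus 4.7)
The plan is simply to read the result off the explicit formula \eqref{eq:seqExplRepr} that was derived in the excerpt immediately before the lemma is stated. Since that derivation has already been carried out, no further unwinding of the recursion \eqref{eq:seqDef} is required, and the argument reduces to a one-line symmetry observation.

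Concretely, \eqref{eq:seqExplRepr} gives
\[
 \max W(\lambda) = \frac{1}{2}\Bigl(1 - \prod_{j=0}^{n}(1-2\lambda_j)\Bigr),
\]
and the product on the right is a finite product of real numbers, hence manifestly invariant under any permutation of its factors. Therefore, if $\pi$ is a permutation of $\{0,\dots,n\}$ and $\lambda' := (\lambda_{\pi(0)},\dots,\lambda_{\pi(n)},0,0,\dots)$, then $\max W(\lambda') = \max W(\lambda)$, even though the intermediate terms of $W(\lambda)$ and $W(\lambda')$ at indices $1,\dots,n$ may well differ.

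I expect no real obstacle here; this lemma is purely a bookkeeping statement packaged for later use (so that one may freely reorder the entries of $\lambda$, for instance to make them monotone, when studying the optimization problem \eqref{eq:optProb}). The only two points worth checking in passing are that the permuted sequence $\lambda'$ still lies in $D_n$, which is immediate since the defining constraints $0 \le \lambda_j \le 1/\pi$ for $j \le n$ and $\lambda_j = 0$ for $j \ge n+1$ are symmetric in the first $n+1$ indices, and that the degenerate case $n = 0$ (formally excluded from the derivation preceding \eqref{eq:seqExplRepr}, which assumes $n \in \mathbb{N}$) is trivial, as there is then only a single entry $\lambda_0$ to permute.
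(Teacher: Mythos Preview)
Your proposal is correct and matches the paper's approach exactly: the paper simply declares the lemma to be ``an immediate conclusion of representation \eqref{eq:seqExplRepr}'' without giving any further argument, and your write-up spells out precisely that one-line symmetry observation (with the harmless extra remarks about $\lambda'\in D_n$ and the trivial $n=0$ case).
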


Another implication of representation \eqref{eq:seqExplRepr} is the fact that $\max W(\lambda)=t_{n+1}$ can be considered as a
continuous function of the variables $\lambda_0,\dots,\lambda_n$. Since the set $D_n(\theta)$ is compact as a closed bounded subset
of an $(n+1)$-dimensional subspace of $l^1(\N_0)$, we deduce that $T_n(\theta)$ can be written as
\begin{equation}\label{eq:extremalProb}
 T_n(\theta) = \max\bigl\{t_{n+1} \bigm| (t_j)=W(\lambda)\,,\ \lambda\in D_n(\theta)\bigr\}\,.
\end{equation}
Hence, $T_n(\theta)$ is determined by a finite-dimensional constrained optimization problem, which can be studied by use of
Lagrange multipliers.

Taking into account the definition of the set $D_n(\theta)$, it follows from equation \eqref{eq:extremalProb} and representation
\eqref{eq:seqExplRepr} that there is some point $(\lambda_0,\dots,\lambda_n)\in\bigl[0,\frac{1}{\pi}\bigr]^{n+1}$ such that
\[
 T_n(\theta) = t_{n+1} = \frac{1}{2}\biggl(1-\prod_{j=0}^n(1-2\lambda_j)\biggr)\quad\text{ and }\quad
 \sum_{j=0}^n M(\lambda_j)=\theta\,,
\]
where $M(x)=\frac{1}{2}\arcsin(\pi x)$ for $0\le x\le\frac{1}{\pi}$. In particular, if
$(\lambda_0,\dots,\lambda_n)\in\bigl(0,\frac{1}{\pi}\bigr)^{n+1}$, then the method of Lagrange multipliers gives a constant
$r\in\R$, $r\neq0$, with
\[
 \frac{\partial t_{n+1}}{\partial\lambda_k} = r\cdot M'(\lambda_k) = r\cdot\frac{\pi}{2\sqrt{1-\pi^2\lambda_k^2}}
 \quad\text{ for }\quad k=0,\dots,n\,.
\]
Hence, in this case, for every $k\in\{0,\dots,n-1\}$ we obtain
\begin{equation}\label{eq:critPointsCond}
 \frac{\sqrt{1-\pi^2\lambda_k^2}}{\sqrt{1-\pi^2\lambda_{k+1}^2}} =
 \frac{\dfrac{\partial t_{n+1}}{\partial\lambda_{k+1}}}{\dfrac{\partial t_{n+1}}{\partial\lambda_k}} =
 \frac{\prod\limits_{\substack{j=0\\ j\neq k+1}}^n (1-2\lambda_j)}{\prod\limits_{\substack{j=0\\ j\neq k}}^n(1-2\lambda_j)} =
 \frac{1-2\lambda_k}{1-2\lambda_{k+1}}\,.
\end{equation}
This leads to the following characterization of critical points of the mapping $\lambda\mapsto\max W(\lambda)$ on $D_n(\theta)$.

\begin{lemma}\label{lem:critPoints}
 For $n\ge 1$ and $\theta\in\bigl(0,\frac{\pi}{2}\bigr]$ let $\lambda=(\lambda_j)\in D_n(\theta)$ with
 $T_n(\theta)=\max W(\lambda)$. Assume that $\lambda_0\ge\dots\ge\lambda_n$. If, in addition, $\lambda_0<\frac{1}{\pi}$ and
 $\lambda_n>0$, then either one has
 \[
  \lambda_0=\dots=\lambda_n=\frac{1}{\pi}\sin\Bigl(\frac{2\theta}{n+1}\Bigr)\,,
 \]
 so that
 \begin{equation}\label{eq:limitEquiParam}
  \max W(\lambda) = \frac{1}{2}-\frac{1}{2}\left(1-\frac{2}{\pi}\sin\Bigl(\frac{2\theta}{n+1}\Bigr)\right)^{n+1}\,,
 \end{equation}
 or there is $l\in\{0,\dots,n-1\}$ with
 \begin{equation}\label{eq:critPointsNotEqual}
  \frac{4}{\pi^2+4}>\lambda_0=\dots=\lambda_l>\frac{2}{\pi^2}>\lambda_{l+1}=\dots=\lambda_n>0\,.
 \end{equation}
 In the latter case, $\lambda_0$ and $\lambda_n$ satisfy
 \begin{equation}\label{eq:critPointsRels}
  \lambda_0+\lambda_n =\frac{4\alpha^2}{\pi^2+4\alpha^2}\quad\text{ and }\quad
  \lambda_0\lambda_n=\frac{\alpha^2-1}{\pi^2+4\alpha^2}\,,
 \end{equation}
 where
 \begin{equation}\label{eq:critPointsAlpha}
  \alpha = \frac{\sqrt{1-\pi^2\lambda_0^2}}{1-2\lambda_0} = \frac{\sqrt{1-\pi^2\lambda_n^2}}{1-2\lambda_n}\in(1,m)\,,\quad
  m:=\frac{\pi}{2}\tan\Bigl(\arcsin\Bigl(\frac{2}{\pi}\Bigr)\Bigr)\,.
 \end{equation}
 
 \begin{proof}
  Let $\lambda_0<\frac{1}{\pi}$ and $\lambda_n>0$. In particular, one has
  $(\lambda_0,\dots,\lambda_n)\in\bigl(0,\frac{1}{\pi}\bigr)^{n+1}$. Hence, it follows from \eqref{eq:critPointsCond} that
  \begin{equation}\label{eq:defAlpha}
   \alpha:=\frac{\sqrt{1-\pi^2\lambda_k^2}}{1-2\lambda_k} 
  \end{equation}
  does not depend on $k\in\{0,\dots,n\}$.
  
  If $\lambda_0=\lambda_n$, then all $\lambda_j$ coincide and one has
  $\theta=(n+1)M(\lambda_0)=\frac{n+1}{2}\arcsin(\pi\lambda_0)$, that is,
  $\lambda_0=\dots=\lambda_n=\frac{1}{\pi}\sin\bigl(\frac{2\theta}{n+1}\bigr)$. Inserting this into representation
  \eqref{eq:seqExplRepr} yields equation \eqref{eq:limitEquiParam}.
  
  Now assume that $\lambda_0>\lambda_n$. A straightforward calculation shows that $x=\frac{2}{\pi^2}$ is the only critical point of
  the mapping
  \begin{equation}\label{eq:constrParamMapping}
   \Bigl[0,\frac{1}{\pi}\Bigr]\ni x\mapsto \frac{\sqrt{1-\pi^2x^2}}{1-2x}\,,
  \end{equation}
  cf.\ \figurename\ \ref{fig:functionPlot}. The image of this point is $\bigl(1-\frac{4}{\pi^2}\bigr)^{-1/2}=m>1$. Moreover, $0$
  and $\frac{4}{\pi^2+4}$ are mapped to $1$, and $\frac{1}{\pi}$ is mapped to $0$. In particular, every value in the interval
  $(1,m)$ has exactly two preimages under the mapping \eqref{eq:constrParamMapping}, and all the other values in the range $[0,m]$
  have only one preimage. Since $\lambda_0>\lambda_n$ by assumption, it follows from \eqref{eq:defAlpha} that $\alpha$ has two
  preimages. Hence, $\alpha\in(1,m)$ and $\frac{4}{\pi^2+4}>\lambda_0>\frac{2}{\pi^2}>\lambda_n>0$. Furthermore, there is
  $l\in\{0,\dots,n-1\}$ with $\lambda_0=\dots=\lambda_l$ and $\lambda_{l+1}=\dots=\lambda_n$. This proves
  \eqref{eq:critPointsNotEqual} and \eqref{eq:critPointsAlpha}.

  Finally, the relations \eqref{eq:critPointsRels} follow from the fact that the equation $\frac{\sqrt{1-\pi^2z^2}}{1-2z}=\alpha$
  can be rewritten as
  \[
   0 = z^2 - \frac{4\alpha^2}{\pi^2+4\alpha^2}\,z + \frac{\alpha^2-1}{\pi^2+4\alpha^2} = (z-\lambda_0)(z-\lambda_n)
   = z^2 - (\lambda_0+\lambda_n)z + \lambda_0\lambda_n\,.\qedhere
  \]
 \end{proof}%
\end{lemma}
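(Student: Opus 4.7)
The plan is to build directly on the Lagrange multiplier computation in \eqref{eq:critPointsCond} that is already established before the lemma statement. Because the hypotheses $\lambda_0<1/\pi$ and $\lambda_n>0$ combined with the monotonicity $\lambda_0\ge\dots\ge\lambda_n$ place the extremizer $(\lambda_0,\dots,\lambda_n)$ in the open box $(0,1/\pi)^{n+1}$, the multiplier relation applies, and we obtain that the quantity
\[
 \alpha := \frac{\sqrt{1-\pi^2\lambda_k^2}}{1-2\lambda_k}
\]
is independent of $k\in\{0,\dots,n\}$. The whole proof then amounts to an analysis of the level sets of the map $g(x):=\sqrt{1-\pi^2x^2}/(1-2x)$ on $[0,1/\pi]$.

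First I would dispose of the degenerate case $\lambda_0=\lambda_n$: this forces all $\lambda_j$ to coincide with a common value $\mu$, the constraint $\sum_{j=0}^n M(\lambda_j)=\theta$ collapses to $\frac{n+1}{2}\arcsin(\pi\mu)=\theta$, hence $\mu=\frac{1}{\pi}\sin\bigl(\frac{2\theta}{n+1}\bigr)$, and substituting into \eqref{eq:seqExplRepr} yields \eqref{eq:limitEquiParam} immediately.

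Next I would analyze $g$. A direct differentiation shows that $g$ has a unique critical point in $(0,1/\pi)$, at $x=2/\pi^2$, where it attains its maximum $g(2/\pi^2)=(1-4/\pi^2)^{-1/2}=m$. The boundary values are $g(0)=1$, $g(1/\pi)=0$, and an easy check gives $g(4/(\pi^2+4))=1$. Consequently $g$ strictly increases from $1$ to $m$ on $[0,2/\pi^2]$ and strictly decreases from $m$ to $0$ on $[2/\pi^2,1/\pi]$, so every value in $(1,m)$ has exactly two preimages (one in each subinterval), while values in $[0,1]\cup\{m\}$ have at most one. If $\lambda_0>\lambda_n$, then $\alpha=g(\lambda_0)=g(\lambda_n)$ with two distinct preimages, forcing $\alpha\in(1,m)$ together with $4/(\pi^2+4)>\lambda_0>2/\pi^2>\lambda_n>0$, which is \eqref{eq:critPointsAlpha}. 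Every intermediate $\lambda_j$ also satisfies $g(\lambda_j)=\alpha$ and lies between $\lambda_n$ and $\lambda_0$, so it must equal one of the two preimages; the monotone ordering then yields the unique breakpoint $l$ in \eqref{eq:critPointsNotEqual}.

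The relations \eqref{eq:critPointsRels} drop out algebraically: squaring $g(\lambda)=\alpha$ gives $1-\pi^2\lambda^2=\alpha^2(1-2\lambda)^2$, which rearranges to the quadratic $(\pi^2+4\alpha^2)\lambda^2-4\alpha^2\lambda+(\alpha^2-1)=0$ whose two roots are precisely $\lambda_0$ and $\lambda_n$; Vieta's formulas then supply both identities. The main obstacle in this plan is the function-theoretic analysis of $g$ — establishing uniqueness of the critical point and the exact preimage count on $(1,m)$ — but this reduces to a one-variable calculus exercise, after which everything else is either already in place from \eqref{eq:critPointsCond} or a short algebraic computation.
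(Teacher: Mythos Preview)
Your proposal is correct and follows essentially the same route as the paper: both invoke the Lagrange condition \eqref{eq:critPointsCond} to obtain the constancy of $\alpha$, handle the equal case via the constraint and \eqref{eq:seqExplRepr}, analyze the single-variable map $x\mapsto\sqrt{1-\pi^2x^2}/(1-2x)$ to pin down the two-preimage structure, and finish with Vieta's formulas on the quadratic obtained by squaring $g(\lambda)=\alpha$. The only differences are cosmetic (you name the map $g$ and spell out the monotonicity on the two subintervals a bit more explicitly), so there is nothing to add.
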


\begin{figure}[ht]\begin{center} 
 \scalebox{0.5}{\includegraphics{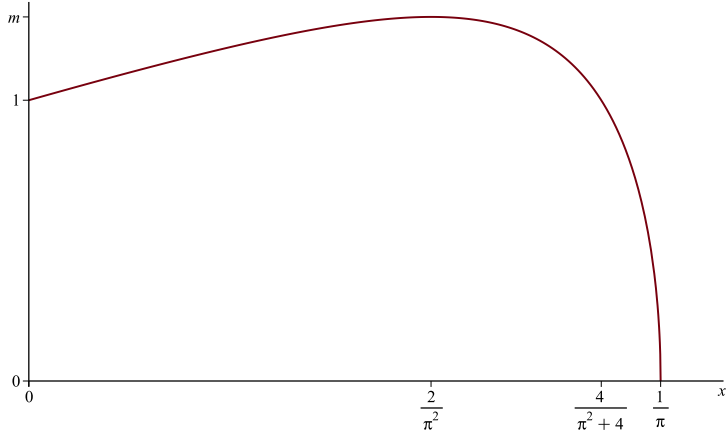}}
 \caption{The mapping $\bigl[0,\frac{1}{\pi}\bigr]\ni x\mapsto \frac{\sqrt{1-\pi^2x^2}}{1-2x}$.}
 \label{fig:functionPlot}
\end{center}
\end{figure}

The preceding lemma is one of the main ingredients for solving the constrained optimization problem that defines the quantity
$T_n(\theta)$ in \eqref{eq:extremalProb}. However, it is still a hard task to compute $T_n(\theta)$ from the corresponding critical
points. Especially the case \eqref{eq:critPointsNotEqual} in Lemma \ref{lem:critPoints} is difficult to handle and needs careful
treatment. An efficient computation of $T_n(\theta)$ therefore requires a technique that allows to narrow down the set of relevant
critical points. The following result provides an adequate tool for this and is thus crucial for the remaining considerations. The
idea behind this approach may also prove useful for solving similar optimization problems.

\begin{lemma}\label{lem:paramSubst}
 For $n\ge 1$ and $\theta\in\bigl(0,\frac{\pi}{2}\bigr]$ let $\lambda=(\lambda_j)\in D_n(\theta)$. If
 $T_n(\theta)=\max W(\lambda)$, then for every $k\in\{0,\dots,n\}$ one has
 \[
  \max W\bigl((\lambda_0,\dots,\lambda_k,0,\dots)\bigr) = T_k(\theta_k)\quad\text{ with }\quad
  \theta_k = \sum_{j=0}^k M(\lambda_j) \le \theta\,.
 \]
 
 \begin{proof}
  Suppose that $T_n(\theta)=\max W(\lambda)$. The case $k=n$ in the claim obviously agrees with this hypothesis.

  Let $k\in\{0,\dots,n-1\}$ be arbitrary and denote $(t_j):=W(\lambda)$. It follows from part (b) of Lemma \ref{lem:seq} that
  $t_{k+1} = \max W\bigl( (\lambda_0,\dots,\lambda_k,0,\dots) \bigr)$. In particular, one has $t_{k+1}\le T_k(\theta_k)$ since
  $(\lambda_0,\dots,\lambda_k,0,\dots)\in D_k(\theta_k)$.

  Assume that $t_{k+1}<T_k(\theta_k)$, and let $\gamma=(\gamma_j)\in D_k(\theta_k)$ with $\max W(\gamma)=T_k(\theta_k)$. Denote
  $\mu:=(\gamma_0,\dots,\gamma_k,\lambda_{k+1},\dots,\lambda_n,0,\dots)\in D_n(\theta_n)$ and $(s_j):=W(\mu)$. Again by part (b) of
  Lemma \ref{lem:seq}, one has $s_{k+1}=\max W(\gamma)>t_{k+1}$ and $s_{n+1}=\max W(\mu)\le T_n(\theta_n)$. Taking into account
  part (a) of Lemma \ref{lem:seq} and the definition of the operator $W$, one obtains that
  \[
   t_{k+2} = t_{k+1} + \lambda_{k+1}(1-2t_{k+1}) < s_{k+1} + \lambda_{k+1}(1-2s_{k+1}) = s_{k+2}\,.
  \]
  Iterating this estimate eventually gives $t_{n+1}<s_{n+1}\le T_n(\theta_n)$, which contradicts the case $k=n$ from above. Thus,
  $\max W\bigl((\lambda_0,\dots,\lambda_k,0,\dots)\bigr)=t_{k+1}=T_k(\theta_k)$ as claimed.
 \end{proof}%
\end{lemma}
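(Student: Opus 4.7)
The plan is to establish the identity by a contradiction argument that exploits the monotonicity from part (a) of Lemma~\ref{lem:seq}. The key observation is that the recursion \eqref{eq:seqDef} is structured so that if we replace a prefix of the sequence $\lambda$ by a different prefix producing a larger value at stage $k+1$, then the tail of the recursion --- run with the same remaining entries $\lambda_{k+1},\dots,\lambda_n$ --- only magnifies this advantage.

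First I would make the easy observation that, writing $(t_j):=W(\lambda)$, part~(b) of Lemma~\ref{lem:seq} together with the definition of $W$ gives $\max W\bigl((\lambda_0,\dots,\lambda_k,0,\dots)\bigr)=t_{k+1}$, and the truncated sequence sits in $D_k(\theta_k)$; hence $t_{k+1}\le T_k(\theta_k)$ by definition of $T_k$. So only the reverse inequality requires work, and the case $k=n$ is just the hypothesis.

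For $k\in\{0,\dots,n-1\}$, I would argue by contradiction: assume $t_{k+1}<T_k(\theta_k)$, pick some optimizer $\gamma=(\gamma_j)\in D_k(\theta_k)$ with $\max W(\gamma)=T_k(\theta_k)$, and splice it with the tail of $\lambda$ to form
\[
 \mu:=(\gamma_0,\dots,\gamma_k,\lambda_{k+1},\dots,\lambda_n,0,\dots)\,.
\]
Since $\sum_j M(\mu_j)=\theta_k+\sum_{j=k+1}^n M(\lambda_j)=\theta$, we have $\mu\in D_n(\theta)$. Writing $(s_j):=W(\mu)$, by construction $s_{k+1}=\max W(\gamma)>t_{k+1}$. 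Now comes the decisive step: applying the recursion $t_{j+1}=t_j+\lambda_j(1-2t_j)$ with the same $\lambda_{j}$'s to a strictly larger initial value, part~(a) of Lemma~\ref{lem:seq} yields $s_{j+1}>t_{j+1}$ for all $j=k+1,\dots,n$. Iterating this strict inequality gives $s_{n+1}>t_{n+1}=T_n(\theta)$, contradicting $\max W(\mu)\le T_n(\theta)$.

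I don't expect any serious obstacle: the argument is essentially a ``cut and paste'' exchange argument, and the only nontrivial ingredient is the monotonicity of $t\mapsto t+x(1-2t)$ on $[0,\tfrac{1}{2}]$ for $x\in[0,\tfrac{1}{2})$, which is part~(a) of Lemma~\ref{lem:seq}. The bookkeeping point to be careful with is that $\mu$ must land in $D_n(\theta)$ (so that the comparison with $T_n(\theta)$ is legitimate), which is immediate from additivity of the constraint $\sum_j M(\lambda_j)=\theta$ and the fact that all entries of $\mu$ lie in $[0,1/\pi]$. Otherwise the argument is structural and short.
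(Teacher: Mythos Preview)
Your proof is correct and follows essentially the same approach as the paper: a contradiction argument that splices an optimizer $\gamma\in D_k(\theta_k)$ with the tail $(\lambda_{k+1},\dots,\lambda_n)$ and then propagates the strict inequality $s_{k+1}>t_{k+1}$ forward using the monotonicity from part~(a) of Lemma~\ref{lem:seq}. The only cosmetic difference is that you explicitly verify $\mu\in D_n(\theta)$ via additivity of the constraint, whereas the paper writes $\mu\in D_n(\theta_n)$ (with $\theta_n=\theta$) without comment.
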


Lemma \ref{lem:paramSubst} states that if a sequence $\lambda\in D_n(\theta)$ solves the optimization problem for $T_n(\theta)$,
then every truncation of $\lambda$ solves the corresponding reduced optimization problem. This allows to exclude many sequences in
$D_n(\theta)$ from the considerations once the optimization problem is understood for small $n$. The number of parameters in
\eqref{eq:extremalProb} can thereby be reduced considerably.

The following lemma demonstrates this technique. It implies that the condition $\lambda_0<\frac{1}{\pi}$ in Lemma
\ref{lem:critPoints} is always satisfied except for one single case, which can be treated separately.

\begin{lemma}\label{lem:lambda_0}
 For $n\ge 1$ and $\theta\in\bigl(0,\frac{\pi}{2}\bigr]$ let $\lambda=(\lambda_j)\in D_n(\theta)$ with
 $T_n(\theta)=\max W(\lambda)$ and $\lambda_0\ge\dots\ge\lambda_n$. If $\lambda_0=\frac{1}{\pi}$, then $\theta=\frac{\pi}{2}$ and
 $n=1$.
 
 \begin{proof}
  Let $\lambda_0=\frac{1}{\pi}$ and define $\theta_1:= M(\lambda_0)+M(\lambda_1)\le\theta$. It is obvious that
  $\lambda\in D_1\bigl(\frac{\pi}{2})$ is equivalent to $\theta_1=\theta=\frac{\pi}{2}$. Assume that $\theta_1<\frac{\pi}{2}$.
  Clearly, one has $\theta_1\ge M(\lambda_0)=\frac{\pi}{4}$ and $\lambda_1 = \frac{1}{\pi}\sin\bigl(2\theta_1-\frac{\pi}{2}\bigr) =
  -\frac{1}{\pi}\left(1-2\sin^2\theta_1\right)\in\bigl[0,\frac{1}{\pi}\bigr)$. Taking into account representation
  \eqref{eq:seqExplRepr}, for $\mu:=(\lambda_0,\lambda_1,0,\dots)\in D_1(\theta_1)$ one computes
  \[
   \begin{aligned}
    \max W(\mu) &= \frac{1}{2}-\frac{1}{2}(1-2\lambda_0)(1-2\lambda_1) = (\lambda_0+\lambda_1) - 2\lambda_0\lambda_1\\
                &= \frac{2}{\pi}\sin^2\theta_1+\frac{2}{\pi^2}\left(1-2\sin^2\theta_1\right)=\frac{2}{\pi^2} + \frac{2\pi-4}
                   {\pi^2}\sin^2\theta_1\,.
   \end{aligned}
  \]
  Since $\arcsin\bigl(\frac{1}{\pi-1}\bigr)<\frac{\pi}{4}\le\theta_1<\frac{\pi}{2}$, it follows from part (\ref{app:lem1:a}) of
  Lemma \ref{app1:lem1} that
  \[
   \max W(\mu) < \frac{2}{\pi}\Bigl(1-\frac{1}{\pi}\sin\theta_1\Bigr)\sin\theta_1 =
   \frac{1}{2}-\frac{1}{2}\Bigl(1-\frac{2}{\pi}\sin\theta_1\Bigr)^2 \le T_1(\theta_1)\,,
  \]
  where the last inequality is due to representation \eqref{eq:limitEquiParam}. This is a contradiction to Lemma
  \ref{lem:paramSubst}. Hence, $\theta_1=\theta=\frac{\pi}{2}$ and, in particular, $\lambda=\mu\in D_1\bigl(\frac{\pi}{2}\bigr)$.
  
  Obviously, one has $D_1\bigl(\frac{\pi}{2}\bigr)=\left\{\bigl(\frac{1}{\pi},\frac{1}{\pi},0,\dots\bigr)\right\}$, so that
  $\lambda=\bigl(\frac{1}{\pi},\frac{1}{\pi},0,\dots\bigr)$. Taking into account that
  $\sin\bigl(\frac{\pi}{3}\bigr)=\frac{\sqrt{3}}{2}$, it follows from representations \eqref{eq:seqExplRepr} and
  \eqref{eq:limitEquiParam} that
  \[
   \max W(\lambda) = \frac{1}{2}-\frac{1}{2}\Bigl(1-\frac{2}{\pi}\Bigr)^2 <
   \frac{1}{2}-\frac{1}{2}\biggl(1-\frac{\sqrt{3}}{\pi}\biggr)^3\le T_2\Bigl(\frac{\pi}{2}\Bigr)\,.
  \]
  Since $\max W(\lambda)=T_n(\theta)$ by hypothesis, this implies that $n=1$.
 \end{proof}%
\end{lemma}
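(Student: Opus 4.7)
The plan is to apply Lemma \ref{lem:paramSubst} with $k=1$, which reduces the problem to understanding optimality for the two-parameter truncation $\mu = (\lambda_0,\lambda_1,0,\dots)$ at its own constraint level $\theta_1 := M(\lambda_0) + M(\lambda_1) \le \theta$. Since $\lambda_0 = 1/\pi$ forces $M(\lambda_0) = \pi/4$, the truncation already consumes the upper half of the budget, so the argument naturally splits into the cases $\theta_1 < \pi/2$ and $\theta_1 = \pi/2$, and the game is to exclude the first.

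In the case $\theta_1 < \pi/2$, the constraint $M(\lambda_1) = \theta_1 - \pi/4$ determines $\lambda_1$ explicitly, namely $\lambda_1 = \frac{1}{\pi}\sin(2\theta_1 - \pi/2) = -\frac{1}{\pi}(1 - 2\sin^2\theta_1)$. Substituting this together with $\lambda_0 = 1/\pi$ into the explicit formula \eqref{eq:seqExplRepr}, which for two parameters simplifies to $\max W(\mu) = (\lambda_0 + \lambda_1) - 2\lambda_0\lambda_1$, produces a clean function of $\sin^2\theta_1$. On the other hand, the equal-splitting competitor $(\eta,\eta,0,\dots) \in D_1(\theta_1)$ with $\eta = \frac{1}{\pi}\sin\theta_1$ gives, via \eqref{eq:limitEquiParam}, the lower bound $T_1(\theta_1) \ge \frac{1}{2} - \frac{1}{2}(1 - \frac{2}{\pi}\sin\theta_1)^2$. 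I would then invoke an elementary inequality (from the appendix, Lemma \ref{app1:lem1}) to obtain a strict inequality $\max W(\mu) < \frac{1}{2} - \frac{1}{2}(1 - \frac{2}{\pi}\sin\theta_1)^2 \le T_1(\theta_1)$ on the relevant interval $\theta_1 \in [\pi/4, \pi/2)$. This contradicts $\max W(\mu) = T_1(\theta_1)$ supplied by Lemma \ref{lem:paramSubst}, forcing $\theta_1 = \pi/2$.

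With $\theta_1 = \pi/2$ and $M(\lambda_0) = \pi/4$ already, the constraint leaves $M(\lambda_1) = \pi/4$, so $\lambda_1 = 1/\pi$; and since $\theta_1 = \theta$, there is no remaining budget for further nonzero entries. Hence $\lambda = (1/\pi, 1/\pi, 0, \dots)$ and $\theta = \pi/2$. To force $n = 1$, I would test $\lambda$ against the three-step equal-splitting competitor $\gamma := (\nu, \nu, \nu, 0, \dots) \in D_2(\pi/2)$ with $\nu = \frac{1}{\pi}\sin(\pi/3) = \sqrt{3}/(2\pi)$. By \eqref{eq:limitEquiParam} this yields $T_2(\pi/2) \ge \frac{1}{2} - \frac{1}{2}(1 - \sqrt{3}/\pi)^3$, and a direct numerical comparison $(1 - 2/\pi)^2 > (1 - \sqrt{3}/\pi)^3$ shows that $\max W(\lambda) = \frac{1}{2} - \frac{1}{2}(1-2/\pi)^2 < T_2(\pi/2) \le T_n(\pi/2)$ for all $n \ge 2$. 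So $n = 1$.

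The main obstacle is the first strict inequality: verifying that $(\lambda_0+\lambda_1) - 2\lambda_0\lambda_1$, expressed as $\frac{2}{\pi^2} + \frac{2\pi-4}{\pi^2}\sin^2\theta_1$, lies strictly below $\frac{1}{2} - \frac{1}{2}(1 - \frac{2}{\pi}\sin\theta_1)^2 = \frac{2}{\pi}\sin\theta_1\bigl(1 - \frac{1}{\pi}\sin\theta_1\bigr)$ throughout $\theta_1 \in [\pi/4, \pi/2)$. This is not purely formal since both sides depend on $\pi$-specific constants and the difference vanishes at $\theta_1 = \pi/2$; I expect to need a targeted calculus lemma of the type collected in Appendix \ref{app:sec:inequalities}. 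The final numerical comparison $(1 - 2/\pi)^2 > (1 - \sqrt{3}/\pi)^3$ is less delicate but should still be recorded carefully.
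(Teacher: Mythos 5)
Your proposal is correct and follows essentially the same route as the paper's proof: truncate to $k=1$ via Lemma~\ref{lem:paramSubst}, express $\max W(\mu)$ in terms of $\sin^2\theta_1$, compare with the equal-split competitor via part~(a) of Lemma~\ref{app1:lem1} to force $\theta_1=\theta=\pi/2$, and then use the numerical inequality $(1-2/\pi)^2>(1-\sqrt{3}/\pi)^3$ to exclude $n\ge 2$.
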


We are now able to solve the finite-dimensional constrained optimization problem in \eqref{eq:extremalProb} for every
$\theta\in\bigl[0,\frac{\pi}{2}\bigr]$ and $n\in\N$. We start with the case $n=1$.

\begin{lemma}\label{lem:twoParams}
 The quantity $T_1(\theta)$ has the representation
 \[
  T_1(\theta) = \begin{cases}
                  T_0(\theta)=\frac{1}{\pi}\,\sin(2\theta) & \text{ for }\quad
                    0\le\theta\le\arctan\bigl(\frac{2}{\pi}\bigr)=\frac{1}{2}\arcsin\bigl(\frac{4\pi}
                    {\pi^2+4}\bigr)\,,\\[0.1cm]
                  \frac{2}{\pi^2}+\frac{\pi^2-4}{2\pi^2}\sin^2\theta & \text{ for }\quad
                    \arctan\bigl(\frac{2}{\pi}\bigr)<\theta<\arcsin\bigl(\frac{2}{\pi}\bigr)\,,\\[0.1cm]
                  \frac{1}{2}-\frac{1}{2}\bigl(1-\frac{2}{\pi}\sin\theta\bigr)^2 & \text{ for }\quad
                    \arcsin\bigl(\frac{2}{\pi}\bigr)\le\theta\le\frac{\pi}{2}\,.
                \end{cases}
 \]
 In particular, if $0<\theta<\arcsin\bigl(\frac{2}{\pi}\bigr)$ and $\lambda=(\lambda_0,\lambda_1,0,\dots)\in D_1(\theta)$ with
 $\lambda_0=\lambda_1$, then the strict inequality $\max W(\lambda)<T_1(\theta)$ holds.

 The mapping $\bigl[0,\frac{\pi}{2}\bigr]\ni\theta\mapsto T_1(\theta)$ is strictly increasing, continuous on
 $\bigl[0,\frac{\pi}{2}\bigr]$, and continuously differentiable on $\bigl(0,\frac{\pi}{2}\bigr)$.
 
 \begin{proof}
  Since $T_1(0)=T_0(0)=0$, the representation is obviously correct for $\theta=0$. For $\theta=\frac{\pi}{2}$ one has
  $D_1\bigl(\frac{\pi}{2}\bigr)=\left\{\bigl(\frac{1}{\pi},\frac{1}{\pi},0,\dots\bigr)\right\}$, so that
  $T_1\bigl(\frac{\pi}{2}\bigr)=\frac{1}{2}-\frac{1}{2}\bigl(1-\frac{2}{\pi}\bigr)^2$ by representation \eqref{eq:seqExplRepr}.
  This also agrees with the claim.
  
  Now let $\theta\in\bigl(0,\frac{\pi}{2}\bigr)$ be arbitrary. Obviously, one has
  $D_0(\theta)=\left\{\bigl(\frac{1}{\pi}\sin(2\theta),0,\dots\bigr)\right\}$ if $\theta\le\frac{\pi}{4}$, and
  $D_0(\theta)=\emptyset$ if $\theta>\frac{\pi}{4}$. Hence,
  \begin{equation}\label{eq:oneParam}
   T_0(\theta)=\frac{1}{\pi}\sin(2\theta)\quad\text{ if }\quad 0<\theta\le\frac{\pi}{4}\,,
  \end{equation}
  and $T_0(\theta)=0$ if $\theta>\frac{\pi}{4}$.
  
  By Lemmas \ref{lem:paramPermut}, \ref{lem:critPoints}, and \ref{lem:lambda_0} there are only two sequences in
  $D_1(\theta)\setminus D_0(\theta)$ that need to be considered in order to compute $T_1(\theta)$. One of them is given by
  $\mu=(\mu_0,\mu_1,0,\dots)$ with $\mu_0=\mu_1=\frac{1}{\pi}\sin\theta\in\bigl(0,\frac{1}{\pi}\bigr)$. For this sequence,
  representation \eqref{eq:limitEquiParam} yields
  \begin{equation}\label{eq:twoParamsEqui}
   \max W(\mu) = \frac{1}{2}-\frac{1}{2}\Bigl(1-\frac{2}{\pi}\sin\theta\Bigr)^2=
   \frac{2}{\pi}\Bigl(1-\frac{1}{\pi}\sin\theta\Bigr)\sin\theta\,.
  \end{equation}
  
  The other sequence in $D_1(\theta)\setminus D_0(\theta)$ that needs to be considered is $\lambda=(\lambda_0,\lambda_1,0,\dots)$
  with $\lambda_0$ and $\lambda_1$ satisfying $\frac{4}{\pi^2+4}>\lambda_0>\frac{2}{\pi^2}>\lambda_1>0$ and
  \begin{equation}\label{eq:twoParamsRels}
   \lambda_0 + \lambda_1 = \frac{4\alpha^2}{\pi^2+4\alpha^2}\,,\quad
   \lambda_0\lambda_1 = \frac{\alpha^2-1}{\pi^2+4\alpha^2}\,,
  \end{equation}
  where
  \begin{equation}\label{eq:twoParamsAlpha}
   \alpha = \frac{\sqrt{1-\pi^2\lambda_0^2}}{1-2\lambda_0} = \frac{\sqrt{1-\pi^2\lambda_1^2}}{1-2\lambda_1}\in(1,m)\,,\quad
   m=\frac{\pi}{2}\tan\Bigl(\arcsin\Bigl(\frac{2}{\pi}\Bigr)\Bigr)\,.
  \end{equation}
  It turns out shortly that this sequence $\lambda$ exists if and only if
  $\arctan\bigl(\frac{2}{\pi}\bigr)<\theta<\arcsin\bigl(\frac{2}{\pi}\bigr)$.
  
  Using representation \eqref{eq:seqExplRepr} and the relations in \eqref{eq:twoParamsRels}, one obtains
  \begin{equation}\label{eq:twoParamsLimitAlpha}
   \max W(\lambda) = \frac{1}{2}-\frac{1}{2}(1-2\lambda_0)(1-2\lambda_1) = (\lambda_0+\lambda_1) - 2\lambda_0\lambda_1
   = 2\,\frac{\alpha^2+1}{\pi^2+4\alpha^2}\,.
  \end{equation}
  The objective is to rewrite the right-hand side of \eqref{eq:twoParamsLimitAlpha} in terms of $\theta$.
  
  It follows from
  \begin{equation}\label{eq:twoParams2Theta}
   2\theta = \arcsin(\pi\lambda_0) + \arcsin(\pi\lambda_1)
  \end{equation}
  and the relations \eqref{eq:twoParamsRels} and \eqref{eq:twoParamsAlpha} that
  \begin{equation}\label{eq:twoParamsSin2ThetaInAlpha}
   \begin{aligned}
    \sin(2\theta)
     &= \pi\lambda_0\sqrt{1-\pi^2\lambda_1^2} + \pi\lambda_1\sqrt{1-\pi^2\lambda_0^2}
      = \alpha\pi\lambda_0(1-2\lambda_1) + \alpha\pi\lambda_1(1-2\lambda_0)\\
     &= \alpha\pi\left(\lambda_0+\lambda_1 - 4\lambda_0\lambda_1\right)
      = \frac{4\alpha\pi}{\pi^2+4\alpha^2}\,.
   \end{aligned}
  \end{equation}
  Taking into account that $\sin(2\theta)>0$, equation \eqref{eq:twoParamsSin2ThetaInAlpha} can be rewritten as
  \[
   \alpha^2 - \frac{\pi}{\sin(2\theta)}\alpha + \frac{\pi^2}{4}=0\,.
  \]
  In turn, this gives
  \[
   \alpha = \frac{\pi}{2\sin(2\theta)}\left(1\pm\sqrt{1-\sin^2(2\theta)}\right)
   = \frac{\pi}{2}\,\frac{1\pm \abs{\cos^2\theta-\sin^2\theta}}{2\sin\theta\cos\theta}\,,
  \]
  that is,
  \begin{equation}\label{eq:twoParamsPossAlpha}
   \alpha = \frac{\pi}{2}\tan\theta\quad\text{ or }\quad \alpha = \frac{\pi}{2}\cot\theta\,.
  \end{equation}
  We show that the second case in \eqref{eq:twoParamsPossAlpha} does not occur.

  Since $1<\alpha<m<\frac{\pi}{2}$, by equation \eqref{eq:twoParamsSin2ThetaInAlpha} one has $\sin(2\theta)<1$, which implies that
  $\theta\neq\frac{\pi}{4}$. Moreover, combining relations \eqref{eq:twoParamsRels} and \eqref{eq:twoParamsAlpha}, $\lambda_1$ can
  be expressed in terms of $\lambda_0$ alone. Hence, by equation \eqref{eq:twoParams2Theta} the quantity $\theta$ can be written as
  a continuous function of the sole variable $\lambda_0\in \bigl(\frac{2}{\pi^2},\frac{4}{\pi^2+4}\bigr)$. Taking the limit
  $\lambda_0\to\frac{4}{\pi^2+4}$ in equation \eqref{eq:twoParams2Theta} then implies that $\lambda_1\to0$ and, therefore,
  $\theta\to\frac{1}{2}\arcsin\bigl(\frac{4\pi}{\pi^2+4}\bigr)<\frac{\pi}{4}$. This yields $\theta<\frac{\pi}{4}$ for every
  $\lambda_0\in\bigl(\frac{2}{\pi^2},\frac{4}{\pi^2+4}\bigr)$ by continuity, that is, the sequence $\lambda$ can exist only if
  $\theta<\frac{\pi}{4}$. Taking into account that $\alpha$ satisfies
  $1<\alpha<m=\frac{\pi}{2}\tan\bigl(\arcsin\bigl(\frac{2}{\pi}\bigr)\bigr)$, it now follows from \eqref{eq:twoParamsPossAlpha}
  that the sequence $\lambda$ exists if and only if $\arctan\bigl(\frac{2}{\pi}\bigr)<\theta<\arcsin\bigl(\frac{2}{\pi}\bigr)$ and,
  in this case, one has
  \begin{equation}\label{eq:twoParamsAlphaInTheta}
   \alpha=\frac{\pi}{2}\tan\theta\,.
  \end{equation}

  Combining equations \eqref{eq:twoParamsLimitAlpha} and \eqref{eq:twoParamsAlphaInTheta} finally gives
  \begin{equation}\label{eq:twoParamsLimitTheta}
   \begin{aligned}
    \max W(\lambda)
    &= \frac{1}{2}\,\frac{\frac{4}{\pi^2}+\tan^2\theta}{1+\tan^2\theta} = \frac{2}{\pi^2}\cos^2\theta + \frac{1}{2}\sin^2\theta
    = \frac{2}{\pi^2} + \frac{\pi^2-4}{2\pi^2}\,\sin^2\theta
   \end{aligned}
  \end{equation}
  for $\arctan\bigl(\frac{2}{\pi}\bigr)<\theta<\arcsin\bigl(\frac{2}{\pi}\bigr)$.
  
  As a result of Lemmas \ref{lem:paramPermut}, \ref{lem:critPoints}, and \ref{lem:lambda_0}, the quantities \eqref{eq:oneParam},
  \eqref{eq:twoParamsEqui}, and \eqref{eq:twoParamsLimitTheta} are the only possible values for $T_1(\theta)$, and we have to
  determine which of them is the greatest.
  
  The easiest case is $\theta>\frac{\pi}{4}$ since then \eqref{eq:twoParamsEqui} is the only possibility for $T_1(\theta)$.
  
  The quantity \eqref{eq:twoParamsLimitTheta} is relevant only if
  $\arctan\bigl(\frac{2}{\pi}\bigr)<\theta<\arcsin\bigl(\frac{2}{\pi}\bigr)<\frac{\pi}{4}$. In this case, it follows from parts
  (\ref{app:lem1:b}) and (\ref{app:lem1:c}) of Lemma \ref{app1:lem1} that \eqref{eq:twoParamsLimitTheta} gives the greatest value
  of the three possibilities and, hence, is the correct term for $T_1(\theta)$ here.
  
  For $0<\theta\le\arctan\bigl(\frac{2}{\pi}\bigr)<2\arctan\bigl(\frac{1}{\pi}\bigr)$, by part (\ref{app:lem1:d}) of Lemma
  \ref{app1:lem1} the quantity \eqref{eq:oneParam} is greater than \eqref{eq:twoParamsEqui}. Therefore, $T_1(\theta)$ is given by
  \eqref{eq:oneParam} in this case.
  
  Finally, consider the case $\arcsin\bigl(\frac{2}{\pi}\bigr)\le\theta\le\frac{\pi}{4}$. Since
  $2\arctan\bigl(\frac{1}{\pi}\bigr)<\arcsin\bigl(\frac{2}{\pi}\bigr)$, it follows from part (\ref{app:lem1:e}) of Lemma
  \ref{app1:lem1} that \eqref{eq:twoParamsEqui} is greater than \eqref{eq:oneParam} and, hence, coincides with $T_1(\theta)$.
  
  This completes the computation of $T_1(\theta)$ for $\theta\in\bigl[0,\frac{\pi}{2}\bigr]$. In particular, it follows from the
  discussion of the two cases $0<\theta\le\arctan\bigl(\frac{2}{\pi}\bigr)$ and
  $\arctan\bigl(\frac{2}{\pi}\bigr)<\theta<\arcsin\bigl(\frac{2}{\pi}\bigr)$ that $\max W(\mu)$ is always strictly less than
  $T_1(\theta)$ if $0<\theta<\arcsin\bigl(\frac{2}{\pi}\bigr)$.
  
  The piecewise defined mapping $\bigl[0,\frac{\pi}{2}\bigr]\ni\theta\mapsto T_1(\theta)$ is continuously differentiable on each of
  the corresponding subintervals. It remains to prove that the mapping is continuous and continuously differentiable at the points
  $\theta=\arctan\bigl(\frac{2}{\pi}\bigr)=\frac{1}{2}\arcsin\bigl(\frac{4\pi}{\pi^2+4}\bigr)$ and
  $\theta=\arcsin\bigl(\frac{2}{\pi}\bigr)$.
  
  Taking into account that $\sin^2\theta=\frac{4}{\pi^2+4}$ for $\theta=\frac{1}{2}\arcsin\bigl(\frac{4\pi}{\pi^2+4}\bigr)$, the
  continuity is straightforward to verify. The continuous differentiability follows from the relations
  \[
   \frac{\pi^2-4}{\pi^2}\sin\theta\cos\theta = \frac{2}{\pi}\Bigl(1-\frac{2}{\pi}\sin\theta\Bigr)\cos\theta
   \quad\text{ for }\quad \theta=\arcsin\Bigl(\frac{2}{\pi}\Bigr)\,
  \]
  and
  \[
   \frac{2}{\pi} \cos(2\theta) = \frac{\pi^2-4}{2\pi^2}\sin(2\theta) = \frac{\pi^2-4}{\pi^2}\sin\theta\cos\theta
   \quad\text{ for }\quad \theta = \frac{1}{2}\arcsin\Bigl(\frac{4\pi}{\pi^2+4}\Bigr)\,,
  \]
  where the latter is due to
  \[
   \cot\Bigl(\arcsin\Bigl(\frac{4\pi}{\pi^2+4}\Bigr)\Bigr) = \frac{\sqrt{1-\frac{16\pi^2}{(\pi^2+4)^2}}}{\frac{4\pi}{\pi^2+4}} =
   \frac{\pi^2-4}{4\pi}\,.
  \]
  This completes the proof.
 \end{proof}%
\end{lemma}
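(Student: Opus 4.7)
The plan is to reduce the optimization that defines $T_1(\theta)$ to a finite list of candidate sequences, use Lemmas \ref{lem:paramPermut}, \ref{lem:critPoints}, and \ref{lem:lambda_0} to identify them explicitly, compute $\max W$ on each, and then determine by direct comparison which candidate dominates in each $\theta$-subinterval. The endpoints can be handled at once: $D_1(0)=\{0\}$ gives $T_1(0)=0$, and by Lemma \ref{lem:lambda_0} one has $D_1(\pi/2)=\{(1/\pi,1/\pi,0,\dots)\}$, so representation \eqref{eq:seqExplRepr} yields $T_1(\pi/2)=\tfrac12-\tfrac12(1-2/\pi)^2$, matching the claim.

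For $\theta\in(0,\pi/2)$, by Lemma \ref{lem:paramPermut} I may order $\lambda_0\ge\lambda_1$, and by Lemma \ref{lem:lambda_0} I have $\lambda_0<1/\pi$. A maximizer then falls into one of three types: (a) a boundary point $\lambda_1=0$, which lies in $D_0(\theta)$ and yields $\max W=\tfrac{1}{\pi}\sin(2\theta)$, feasible only for $\theta\le\pi/4$; (b) the equal interior critical point $\lambda_0=\lambda_1=\tfrac{1}{\pi}\sin\theta$, which by \eqref{eq:limitEquiParam} gives $\max W=\tfrac12-\tfrac12(1-\tfrac{2}{\pi}\sin\theta)^2$; (c) an unequal interior critical point satisfying the Lagrange relations \eqref{eq:critPointsRels}--\eqref{eq:critPointsAlpha}. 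For case (c), I would use \eqref{eq:seqExplRepr} together with Vieta's formulas from \eqref{eq:critPointsRels} to write
\[
 \max W(\lambda) = (\lambda_0+\lambda_1)-2\lambda_0\lambda_1 = \frac{2(\alpha^2+1)}{\pi^2+4\alpha^2}\,.
\]
To eliminate $\alpha$, I would expand $2\theta=\arcsin(\pi\lambda_0)+\arcsin(\pi\lambda_1)$ via the sine addition formula, substitute $\sqrt{1-\pi^2\lambda_j^2}=\alpha(1-2\lambda_j)$, and use \eqref{eq:critPointsRels} again to obtain $\sin(2\theta)=4\alpha\pi/(\pi^2+4\alpha^2)$. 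This rearranges to a quadratic in $\alpha$ with roots $\tfrac{\pi}{2}\tan\theta$ and $\tfrac{\pi}{2}\cot\theta$.

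The trickiest step is selecting the correct root. I would exploit that \eqref{eq:twoParamsRels}--\eqref{eq:twoParamsAlpha} determine $\lambda_1$ continuously from $\lambda_0\in(2/\pi^2,4/(\pi^2+4))$, so $\theta$ depends continuously on $\lambda_0$; sending $\lambda_0\to4/(\pi^2+4)$ forces $\lambda_1\to0$ and $\theta\to\tfrac12\arcsin(4\pi/(\pi^2+4))<\pi/4$, while $\lambda_0\to 2/\pi^2$ forces $\lambda_0=\lambda_1=2/\pi^2$ and $\theta\to\arcsin(2/\pi)$. Hence $\theta<\pi/4$ throughout case (c), and among the two roots only $\alpha=\tfrac{\pi}{2}\tan\theta$ is consistent with $1<\alpha<m=\tfrac{\pi}{2}\tan(\arcsin(2/\pi))$; this further shows case (c) is realized precisely when $\arctan(2/\pi)<\theta<\arcsin(2/\pi)$, and substituting back gives $\max W=\tfrac{2}{\pi^2}+\tfrac{\pi^2-4}{2\pi^2}\sin^2\theta$.

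It remains to compare (a), (b), (c) pointwise. For $\theta>\pi/4$ only (b) is available, and for $\theta<\arctan(2/\pi)$ only (a) and (b) are available; on the overlap range $\arctan(2/\pi)<\theta<\arcsin(2/\pi)$ all three compete. The necessary pointwise inequalities among the three closed-form expressions are exactly the elementary inequalities collected in Lemma \ref{app1:lem1} of the appendix, and applying the appropriate parts gives (a) dominant on $[0,\arctan(2/\pi)]$, (c) dominant on $(\arctan(2/\pi),\arcsin(2/\pi))$, and (b) dominant on $[\arcsin(2/\pi),\pi/2]$; the strict inequality $\max W(\mu)<T_1(\theta)$ for the equal-parameter candidate on $(0,\arcsin(2/\pi))$ is a byproduct of these comparisons. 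Continuity and $C^1$-matching at the two junctions reduce to direct trigonometric identities (noting $\sin^2\theta=4/(\pi^2+4)$ at $\theta=\tfrac12\arcsin(4\pi/(\pi^2+4))$ and computing the two one-sided derivatives at $\theta=\arcsin(2/\pi)$). The main obstacle I expect is the branch-selection argument for $\alpha$, together with correctly marshalling the appendix inequalities to pick out the dominant candidate on each subinterval; the rest is bookkeeping.
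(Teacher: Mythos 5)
Your proposal follows the paper's proof essentially step for step: the same reduction via Lemmas \ref{lem:paramPermut}, \ref{lem:critPoints}, and \ref{lem:lambda_0} to the three candidate families, the same Vieta-based elimination of $\alpha$ leading to the quadratic with roots $\frac{\pi}{2}\tan\theta$ and $\frac{\pi}{2}\cot\theta$, the same continuity argument in $\lambda_0$ to establish $\theta<\pi/4$ and select the branch $\alpha=\frac{\pi}{2}\tan\theta$, and the same appeal to the appendix inequalities of Lemma \ref{app1:lem1} for the pointwise comparisons and to direct trigonometric identities for the $C^1$-matching. There is no gap and no meaningful deviation from the paper's route.
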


So far, Lemma \ref{lem:paramSubst} has been used only to obtain Lemma \ref{lem:lambda_0}. Its whole strength becomes apparent in
connection with Lemma \ref{lem:paramPermut}. This is demonstrated in the following corollary to Lemma \ref{lem:twoParams}, which
states that in \eqref{eq:critPointsNotEqual} the sequences with $l\in\{0,\dots,n-2\}$  do not need to be considered.

\begin{corollary}\label{cor:critPoints:ln}
 In the case \eqref{eq:critPointsNotEqual} in Lemma \ref{lem:critPoints} one has $l=n-1$.
 
 \begin{proof}
  The case $n=1$ is obvious. For $n\ge 2$ let $\lambda=(\lambda_0,\dots,\lambda_n,0,\dots)\in D_n(\theta)$ with
  \[
   \frac{4}{\pi^2+4}>\lambda_0=\dots=\lambda_l > \frac{2}{\pi^2} > \lambda_{l+1}=\dots=\lambda_n>0
  \]
  for some $l\in\{0,\dots,n-2\}$. In particular, one has $0<\lambda_{n-1}=\lambda_n<\frac{2}{\pi^2}$, which implies that
  $0<\tilde\theta:=M(\lambda_{n-1})+M(\lambda_n)<\arcsin\bigl(\frac{2}{\pi}\bigr)$. Hence, it follows from Lemma
  \ref{lem:twoParams} that
  \[
   \max W\bigl((\lambda_{n-1},\lambda_n,0,\dots)\bigr) < T_1(\tilde\theta)\,.
  \]
  By Lemmas \ref{lem:paramPermut} and \ref{lem:paramSubst} one concludes that
  \[
   \max W(\lambda) = \max W\bigl((\lambda_{n-1},\lambda_n,\lambda_0,\dots,\lambda_{n-2},0,\dots)\bigr)<T_n(\theta)\,.
  \]
  This leaves $l=n-1$ as the only possibility in \eqref{eq:critPointsNotEqual}.
 \end{proof}%
\end{corollary}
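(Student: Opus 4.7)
The plan is to argue by contradiction using the two structural tools already established: Lemma \ref{lem:paramPermut} (permutation-invariance of $\max W$) and Lemma \ref{lem:paramSubst} (every truncation of an optimal sequence is itself optimal for the reduced problem). The key technical input will be the strict-inequality tail of Lemma \ref{lem:twoParams}, which says that on $\bigl(0,\arcsin(\tfrac{2}{\pi})\bigr)$ the equal-pair sequence never attains $T_1(\tilde\theta)$.

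First I dispose of $n=1$, where $l\in\{0,\dots,n-1\}$ forces $l=0=n-1$ automatically. So assume $n\ge 2$ and suppose, for contradiction, that the configuration in \eqref{eq:critPointsNotEqual} occurs with some $l\le n-2$. Then the block of small parameters $\lambda_{l+1}=\dots=\lambda_n$ contains at least two entries; in particular $\lambda_{n-1}=\lambda_n$ and $0<\lambda_n<\tfrac{2}{\pi^2}$.

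Next I set $\tilde\theta := M(\lambda_{n-1})+M(\lambda_n)=\arcsin(\pi\lambda_n)$. The bound $\lambda_n<\tfrac{2}{\pi^2}$ gives $0<\tilde\theta<\arcsin(\tfrac{2}{\pi})$, which is precisely the range where Lemma \ref{lem:twoParams} provides the strict inequality for an equal-pair sequence. Hence
\[
 \max W\bigl((\lambda_{n-1},\lambda_n,0,\dots)\bigr)\;<\;T_1(\tilde\theta).
\]
Now I invoke Lemma \ref{lem:paramPermut} to reorder the entries of $\lambda$ so that the two small parameters $\lambda_{n-1},\lambda_n$ are placed in positions $0,1$; this does not change $\max W(\lambda)$ and does not change membership in $D_n(\theta)$. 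Applying Lemma \ref{lem:paramSubst} to the permuted sequence at $k=1$ would force the truncation to positions $0,1$ to attain $T_1(\tilde\theta)$ exactly, contradicting the strict inequality above. Therefore the starting assumption $l\le n-2$ is untenable, and $l=n-1$ is the only remaining possibility in \eqref{eq:critPointsNotEqual}.

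The main obstacle I anticipate is ensuring that the strict inequality in Lemma \ref{lem:twoParams} is applied in its sharp form on the correct $\tilde\theta$-range; everything else is essentially bookkeeping once permutation and truncation are combined. No new analytic input is needed beyond the lemmas already proved.
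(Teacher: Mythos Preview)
Your proposal is correct and follows essentially the same argument as the paper's proof: both permute the two equal small entries to the front (Lemma \ref{lem:paramPermut}), invoke the strict inequality for equal pairs from Lemma \ref{lem:twoParams} on the range $\tilde\theta\in\bigl(0,\arcsin(\tfrac{2}{\pi})\bigr)$, and then use Lemma \ref{lem:paramSubst} (in your version directly, in the paper's via its contrapositive) to conclude that the original sequence cannot be a maximizer.
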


We now turn to the computation of $T_2(\theta)$ for $\theta\in\bigl[0,\frac{\pi}{2}\bigr]$.

\begin{lemma}\label{lem:threeParams}
 In the interval $\bigl(0,\frac{\pi}{2}\bigr]$ the equation
 \begin{equation}\label{eq:defVartheta}
  \Bigl(1-\frac{2}{\pi}\sin\vartheta\Bigr)^2 = \biggl(1-\frac{2}{\pi}\sin\Bigl(\frac{2\vartheta}{3}\Bigr)\biggr)^3
 \end{equation}
 has a unique solution $\vartheta\in\bigl(\arcsin\bigl(\frac{2}{\pi}\bigr),\frac{\pi}{2}\bigr)$. Moreover, the quantity
 $T_2(\theta)$ has the representation
 \[
  T_2(\theta) = \begin{cases}
                 T_1(\theta) & \text{ for }\quad 0\le\theta\le\vartheta\,,\\[0.1cm]
                 \dfrac{1}{2}-\dfrac{1}{2}\biggl(1-\dfrac{2}{\pi}\sin\Bigl(\dfrac{2\theta}{3}\Bigr)\biggr)^3
                  & \text{ for }\quad \vartheta<\theta\le\frac{\pi}{2}\,.
                \end{cases}
 \]
 In particular, one has $T_1(\theta)<T_2(\theta)$ if $\theta>\vartheta$, and the strict inequality $\max W(\lambda)<T_2( \theta)$
 holds for $\theta\in\bigl(0,\frac{\pi}{2}\bigr]$ and $\lambda=(\lambda_0,\lambda_1,\lambda_2,0, \dots)\in D_2(\theta)$ with
 $\lambda_0=\lambda_1>\lambda_2>0$.
 
 The mapping $\bigl[0,\frac{\pi}{2}\bigr]\ni\theta\mapsto T_2(\theta)$ is strictly increasing, continuous on
 $\bigl[0,\frac{\pi}{2}\bigr]$, and continuously differentiable on $\bigl(0,\frac{\pi}{2}\bigr)\setminus\{\vartheta\}$.
 
 \begin{proof}
  Since $T_2(0)=T_1(0)=0$, the case $\theta=0$ in the representation for $T_2(\theta)$ is obvious. Let
  $\theta\in\bigl(0,\frac{\pi}{2}\bigr]$ be arbitrary. It follows from Lemmas \ref{lem:paramPermut}, \ref{lem:critPoints}, and
  \ref{lem:lambda_0} and Corollary \ref{cor:critPoints:ln} that there are only two sequences in $D_2(\theta)\setminus D_1(\theta)$
  that need to be considered in order to compute $T_2(\theta)$. One of them is $\mu=(\mu_0,\mu_1,\mu_2,0,\dots)$ with
  $\mu_0=\mu_1=\mu_2=\frac{1}{\pi}\sin\bigl(\frac{2\theta}{3}\bigr)$. For this sequence representation \eqref{eq:limitEquiParam}
  yields
  \begin{equation}\label{eq:threeParamsLimitEqui}
   \max W(\mu) = \frac{1}{2}-\frac{1}{2}\biggl(1-\frac{2}{\pi}\sin\Bigl(\frac{2\theta}{3}\Bigr)\biggr)^3\,.
  \end{equation}
  The other sequence in $D_2(\theta)\setminus D_1(\theta)$ that needs to be considered is
  $\lambda=(\lambda_0,\lambda_1,\lambda_2,0,\dots)$, where $\frac{4}{\pi^2+4}>\lambda_0=\lambda_1>\frac{2}{\pi^2}>\lambda_2>0$ and
  $\lambda_0$ and $\lambda_2$ are given by \eqref{eq:critPointsRels} and \eqref{eq:critPointsAlpha}. Using representation
  \eqref{eq:seqExplRepr}, one obtains 
  \begin{equation}\label{eq:threeParamsLimitNotEqual}
   \max W(\lambda) = \frac{1}{2}-\frac{1}{2}(1-2\lambda_0)^2(1-2\lambda_2)\,.
  \end{equation}
  According to Lemma \ref{app:lem3}, this sequence $\lambda$ can exist only if $\theta$ satisfies the two-sided estimate
  $\frac{3}{2}\arcsin\bigl(\frac{2}{\pi}\bigr)<\theta\le\arcsin\bigl(\frac{12+\pi^2}{8\pi}\bigr)+
  \frac{1}{2}\arcsin\bigl(\frac{12-\pi^2}{4\pi}\bigr)$.
  However, if $\lambda$ exists, combining Lemma \ref{app:lem3} with equations \eqref{eq:threeParamsLimitEqui} and
  \eqref{eq:threeParamsLimitNotEqual} yields
  \[
   \max W(\lambda) < \max W(\mu)\,.
  \]
  Therefore, in order to compute $T_2(\theta)$ for $\theta\in\bigl(0,\frac{\pi}{2}\bigr]$, it remains to compare
  \eqref{eq:threeParamsLimitEqui} with $T_1(\theta)$. In particular, for every sequence
  $\lambda=(\lambda_0,\lambda_1,\lambda_2,0,\dots)\in D_2(\theta)$ with $\lambda_0=\lambda_1>\lambda_2>0$ the strict inequality
  $\max W(\lambda) < T_2(\theta)$ holds.
  
  According to Lemma \ref{app:lem2}, there is a unique $\vartheta\in\bigl(\arcsin\bigl(\frac{2}{\pi}\bigr),\frac{\pi}{2}\bigr)$
  such that
  \[
   \Bigl(1-\frac{2}{\pi}\sin\theta\Bigr)^2 < \biggl(1-\frac{2}{\pi}\sin\Bigl(\frac{2\theta}{3}\Bigr)\biggr)^3
   \quad\text{ for }\quad 0 < \theta < \vartheta
  \]
  and
  \[
   \Bigl(1-\frac{2}{\pi}\sin\theta\Bigr)^2 > \biggl(1-\frac{2}{\pi}\sin\Bigl(\frac{2\theta}{3}\Bigr)\biggr)^3
   \quad\text{ for }\quad \vartheta < \theta \le \frac{\pi}{2}\,.
  \]
  These inequalities imply that $\vartheta$ is the unique solution to equation \eqref{eq:defVartheta} in the interval
  $\bigl(0,\frac{\pi}{2}\bigr]$. Moreover, taking into account Lemma \ref{lem:twoParams}, equation \eqref{eq:threeParamsLimitEqui},
  and the inequality $\vartheta>\arcsin\bigl(\frac{2}{\pi}\bigr)$, it follows that $T_1(\theta) < \max W(\mu)$ if and only if
  $\theta>\vartheta$. This proves the claimed representation for $T_2(\theta)$.
  
  By Lemma \ref{lem:twoParams} and the choice of $\vartheta$ it is obvious that the mapping
  $\bigl[0,\frac{\pi}{2}\bigr]\ni\theta\mapsto T_2(\theta)$ is strictly increasing, continuous on $\bigl[0,\frac{\pi}{2}\bigr]$,
  and continuously differentiable on $\bigl(0,\frac{\pi}{2}\bigr)\setminus\{\vartheta\}$.
 \end{proof}%
\end{lemma}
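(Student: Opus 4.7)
My plan is to reduce the computation of $T_2(\theta)$ to a comparison between $T_1(\theta)$ and the maximum of $\max W$ over $D_2(\theta)\setminus D_1(\theta)$, and then to use the critical-point machinery of Lemma \ref{lem:critPoints} together with the truncation principle of Lemma \ref{lem:paramSubst}, Lemma \ref{lem:lambda_0}, and Corollary \ref{cor:critPoints:ln} to cut the candidates down to a finite list. The case $\theta=0$ is trivial since $T_2(0)=T_1(0)=0$, so I fix $\theta\in\bigl(0,\frac{\pi}{2}\bigr]$. Because $D_1(\theta)\subset D_2(\theta)$, the supremum $T_2(\theta)$ is attained either by a sequence already in $D_1(\theta)$, in which case $T_2(\theta)=T_1(\theta)$, or by a sequence $\lambda=(\lambda_0,\lambda_1,\lambda_2,0,\dots)$ with $\lambda_2>0$.

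In the latter case, after reordering via Lemma \ref{lem:paramPermut} so that $\lambda_0\ge\lambda_1\ge\lambda_2>0$, the three cited results leave only two critical configurations to examine: the fully equal sequence $\mu$ with $\mu_0=\mu_1=\mu_2=\frac{1}{\pi}\sin\bigl(\frac{2\theta}{3}\bigr)$, for which \eqref{eq:limitEquiParam} yields $\max W(\mu)=\frac{1}{2}-\frac{1}{2}\bigl(1-\frac{2}{\pi}\sin\bigl(\frac{2\theta}{3}\bigr)\bigr)^3$, and the two-level sequence $\lambda$ with $\frac{4}{\pi^2+4}>\lambda_0=\lambda_1>\frac{2}{\pi^2}>\lambda_2>0$ constrained by \eqref{eq:critPointsRels}--\eqref{eq:critPointsAlpha}, whose value $\max W(\lambda)$ is given by \eqref{eq:seqExplRepr}.

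The main obstacle, and the heart of the argument, is to show that whenever the two-level configuration exists the strict inequality $\max W(\lambda)<\max W(\mu)$ holds, so that the two-level candidate can always be discarded. I would prove this by parametrizing the configuration through $\alpha\in(1,m)$ from \eqref{eq:critPointsAlpha}, expressing $\lambda_0$, $\lambda_2$, and $\theta=\frac{1}{2}\arcsin(\pi\lambda_0)+\frac{1}{2}\arcsin(\pi\lambda_2)$ as explicit functions of $\alpha$ alone, and reducing the desired inequality to a single-variable estimate (recorded as an auxiliary Lemma \ref{app:lem3} in the appendix) that simultaneously pins down the admissible range of $\theta$. I expect this step to resist an easy monotonicity comparison and to require careful algebraic manipulation, since the two expressions involve incommensurate powers and trigonometric arguments scaled differently.

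Once the two-level candidate is eliminated, it remains to compare $\max W(\mu)$ with $T_1(\theta)$ from Lemma \ref{lem:twoParams}. For $\theta\ge\arcsin\bigl(\frac{2}{\pi}\bigr)$ both quantities have the form $\frac{1}{2}-\frac{1}{2}\bigl(1-\frac{2}{\pi}\sin\varphi\bigr)^k$ with $(\varphi,k)=(\theta,2)$ versus $\bigl(\frac{2\theta}{3},3\bigr)$, so the comparison is precisely equation \eqref{eq:defVartheta}; a second auxiliary monotonicity lemma (Lemma \ref{app:lem2}) on $\theta\mapsto\bigl(1-\frac{2}{\pi}\sin\theta\bigr)^2-\bigl(1-\frac{2}{\pi}\sin\bigl(\frac{2\theta}{3}\bigr)\bigr)^3$ provides exactly one crossing $\vartheta\in\bigl(\arcsin\bigl(\frac{2}{\pi}\bigr),\frac{\pi}{2}\bigr)$ and shows $T_1(\theta)<\max W(\mu)$ precisely for $\theta>\vartheta$. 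For $\theta<\arcsin\bigl(\frac{2}{\pi}\bigr)$, the same monotonicity analysis (together with the endpoint values at $0$ and $\arcsin(2/\pi)$) gives $\max W(\mu)\le T_1(\theta)$. This yields the claimed piecewise formula for $T_2$. Strict monotonicity, continuity on $\bigl[0,\frac{\pi}{2}\bigr]$, and continuous differentiability on $\bigl(0,\frac{\pi}{2}\bigr)\setminus\{\vartheta\}$ then follow from the corresponding properties of $T_1$ and the smoothness of the cubic expression, while continuity at $\vartheta$ is built into the defining equation \eqref{eq:defVartheta}.
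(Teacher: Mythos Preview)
Your proposal is correct and follows essentially the same route as the paper's proof: reduce to two candidate critical configurations via Lemmas~\ref{lem:paramPermut}, \ref{lem:critPoints}, \ref{lem:lambda_0} and Corollary~\ref{cor:critPoints:ln}, eliminate the two-level configuration by an auxiliary single-variable inequality (Lemma~\ref{app:lem3}), and then compare $\max W(\mu)$ with $T_1(\theta)$ via Lemma~\ref{app:lem2}. One slip to fix: in the two-level configuration with $\lambda_0=\lambda_1$ you have $\theta=2M(\lambda_0)+M(\lambda_2)=\arcsin(\pi\lambda_0)+\tfrac{1}{2}\arcsin(\pi\lambda_2)$, not $\tfrac{1}{2}\arcsin(\pi\lambda_0)+\tfrac{1}{2}\arcsin(\pi\lambda_2)$; the paper accordingly parametrizes Lemma~\ref{app:lem3} by $x=\lambda_0\in\bigl(\tfrac{2}{\pi^2},\tfrac{4}{\pi^2+4}\bigr)$ rather than by $\alpha$, which makes the range of admissible $\theta$ more transparent.
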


In order to prove Theorem \ref{thm:solOptProb}, it remains to show that $T(\theta)$ coincides with $T_2(\theta)$.

\begin{proposition}\label{prop:solOptProb}
 For every $\theta\in\bigl[0,\frac{\pi}{2}\bigr]$ and $n\ge 2$ one has $T(\theta)=T_n(\theta)=T_2(\theta)$.
 
 \begin{proof}
  Since $T(0)=0$, the case $\theta=0$ is obvious. Let $\theta\in\bigl(0,\frac{\pi}{2}\bigr]$ be arbitrary. As a result of equation
  \eqref{eq:supOptn}, it suffices to show that $T_n(\theta)=T_2(\theta)$ for all $n\ge 3$. Let $n\ge 3$ and let
  $\lambda=(\lambda_j)\in D_n(\theta)\setminus D_{n-1}(\theta)$. The objective is to show that $\max W(\lambda)<T_n(\theta)$.

  First, assume that $\lambda_0=\dots=\lambda_n=\frac{1}{\pi}\sin\bigl(\frac{2\theta}{n+1}\bigr)>0$. We examine the two cases
  $\lambda_0<\frac{2}{\pi^2}$ and $\lambda_0\ge\frac{2}{\pi^2}$. If $\lambda_0<\frac{2}{\pi^2}$, then
  $2M(\lambda_0)<\arcsin\bigl(\frac{2}{\pi}\bigr)$. In this case, it follows from Lemma \ref{lem:twoParams} that
  $\max W\bigl((\lambda_0,\lambda_0,0,\dots)\bigr)<T_1(\tilde\theta)$ with $\tilde\theta=2M(\lambda_0)$. Hence, by Lemma
  \ref{lem:paramSubst} one has $\max W(\lambda)<T_n(\theta)$. If $\lambda_0\ge\frac{2}{\pi^2}$, then
  \[
   (n+1)\arcsin\Bigl(\frac{2}{\pi}\Bigr) \le 2(n+1)M(\lambda_0) = 2\theta \le \pi\,,
  \]
  which is possible only if $n\le3$, that is, $n=3$. In this case, one has
  $\lambda_0=\frac{1}{\pi}\sin\bigl(\frac{\theta}{2}\bigr)$. Taking into account representation \eqref{eq:limitEquiParam}, it
  follows from Lemma \ref{app:lem4} that
  \[
   \max W(\lambda)=\frac{1}{2}-\frac{1}{2}\biggl(1-\frac{2}{\pi}\sin\Bigl(\frac{\theta}{2}\Bigr)\biggr)^4 <
   \frac{1}{2}-\frac{1}{2}\biggl(1-\frac{2}{\pi}\sin\Bigl(\frac{2\theta}{3}\Bigr)\biggr)^3 \le T_2(\theta)\le T_n(\theta)\,.
  \]
  So, one concludes that $\max W(\lambda)<T_n(\theta)$ again.
  
  Now, assume that $\lambda=(\lambda_j)\in D_n(\theta)\setminus D_{n-1}(\theta)$ satisfies
  $\lambda_0=\dots=\lambda_{n-1}>\lambda_n > 0$. Since, in particular, $\lambda_{n-2}=\lambda_{n-1}>\lambda_n>0$, Lemma
  \ref{lem:threeParams} implies that
  \[
   \max W\bigl((\lambda_{n-2},\lambda_{n-1},\lambda_n,0,\dots)\bigr) < T_2(\tilde\theta)\quad\text{ with }\quad
   \tilde\theta = \sum_{j=n-2}^n M(\lambda_j)\,.
  \]
  It follows from Lemmas \ref{lem:paramPermut} and \ref{lem:paramSubst} that
  \[
   \max W(\lambda) = \max W\bigl((\lambda_{n-2},\lambda_{n-1},\lambda_n,\lambda_0,\dots,\lambda_{n-3},0,\dots)\bigr)<T_n(\theta)\,,
  \]
  that is, $\max W(\lambda)< T_n(\theta)$ once again.

  Hence, by Lemmas \ref{lem:paramPermut}, \ref{lem:critPoints}, and \ref{lem:lambda_0} and Corollary \ref{cor:critPoints:ln} the
  inequality $\max W(\lambda) < T_n(\theta)$ holds for all $\lambda\in D_n(\theta)\setminus D_{n-1}(\theta)$, which implies that
  $T_n(\theta)=T_{n-1}(\theta)$. Now the claim follows by induction.
 \end{proof}%
\end{proposition}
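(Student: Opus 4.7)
The plan is to exploit the chain $T_0(\theta)\le T_1(\theta)\le T_2(\theta)\le\ldots$ together with the identity $T(\theta)=\sup_{n\in\N_0}T_n(\theta)$ from \eqref{eq:supOptn}, and to show by induction on $n\ge 3$ that $T_n(\theta)=T_{n-1}(\theta)$. Since $T_2(\theta)=T_1(\theta)$ below the threshold $\vartheta$ and $T_2(\theta)$ is already known explicitly, this will give $T(\theta)=T_2(\theta)$ for all $\theta\in\bigl[0,\frac{\pi}{2}\bigr]$.

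To prove the inductive step, fix $n\ge 3$ and a sequence $\lambda=(\lambda_j)\in D_n(\theta)\setminus D_{n-1}(\theta)$ realising the supremum in \eqref{eq:extremalProb}, assuming as we may (by Lemma \ref{lem:paramPermut}) that $\lambda_0\ge\dots\ge\lambda_n>0$. Lemma \ref{lem:lambda_0} allows us to exclude $\lambda_0=\frac{1}{\pi}$, so Lemma \ref{lem:critPoints} applies. Combined with Corollary \ref{cor:critPoints:ln}, the only two shapes of candidate extremisers are the constant sequence $\lambda_0=\dots=\lambda_n=\frac{1}{\pi}\sin\bigl(\frac{2\theta}{n+1}\bigr)$, or the split shape $\lambda_0=\dots=\lambda_{n-1}>\lambda_n>0$ with the two-value relations from \eqref{eq:critPointsRels}--\eqref{eq:critPointsAlpha}. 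The goal in each case will be to locate a short subsequence of $\lambda$ to which Lemma \ref{lem:twoParams} or Lemma \ref{lem:threeParams} supplies a strict inequality, and then invoke Lemma \ref{lem:paramSubst} (with Lemma \ref{lem:paramPermut} to move the relevant entries to the front) to propagate this strict inequality to $\max W(\lambda)$ itself, contradicting $\max W(\lambda)=T_n(\theta)$ unless $\lambda$ actually sits in $D_{n-1}(\theta)$.

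For the split-shape case this is immediate: the trailing triple $(\lambda_{n-2},\lambda_{n-1},\lambda_n)$ satisfies $\lambda_{n-2}=\lambda_{n-1}>\lambda_n>0$, so Lemma \ref{lem:threeParams} yields $\max W\bigl((\lambda_{n-2},\lambda_{n-1},\lambda_n,0,\dots)\bigr)<T_2(\tilde\theta)$ with $\tilde\theta=M(\lambda_{n-2})+M(\lambda_{n-1})+M(\lambda_n)$, and permuting this triple to the front before applying Lemma \ref{lem:paramSubst} gives the required strict inequality. For the constant-sequence case, I would split according to whether $\lambda_0<\frac{2}{\pi^2}$ or $\lambda_0\ge\frac{2}{\pi^2}$. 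In the first subcase the pair $(\lambda_0,\lambda_0)$ satisfies the hypothesis of the last clause of Lemma \ref{lem:twoParams}, giving $\max W\bigl((\lambda_0,\lambda_0,0,\dots)\bigr)<T_1(2M(\lambda_0))$, and Lemma \ref{lem:paramSubst} finishes things. In the second subcase the constraint $(n+1)\arcsin\bigl(\frac{2}{\pi}\bigr)\le 2(n+1)M(\lambda_0)=2\theta\le\pi$ forces $n=3$, and one must show
\[
 \tfrac{1}{2}-\tfrac{1}{2}\bigl(1-\tfrac{2}{\pi}\sin(\theta/2)\bigr)^4<\tfrac{1}{2}-\tfrac{1}{2}\bigl(1-\tfrac{2}{\pi}\sin(2\theta/3)\bigr)^3
\]
on the relevant range of $\theta$, which is a one-variable elementary inequality to be deferred to the appendix.

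The main obstacle is precisely this last inequality comparing fourth and cubic powers of two different sine-expressions, since it cannot be dispatched by the reduction machinery and requires a genuine calculus argument on a specific interval of $\theta$. Everything else in the proof is organisational: it is the interplay of Lemmas \ref{lem:paramPermut} and \ref{lem:paramSubst} that makes the recursion work, by letting us replace any optimal sequence in $D_n(\theta)$ by a truncated optimal sequence in a lower-dimensional $D_k(\tilde\theta)$ whose behaviour is already controlled.
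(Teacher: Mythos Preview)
Your proposal is correct and follows essentially the same route as the paper: the same inductive reduction $T_n(\theta)=T_{n-1}(\theta)$ for $n\ge 3$, the same classification of candidate extremisers via Lemmas \ref{lem:paramPermut}, \ref{lem:critPoints}, \ref{lem:lambda_0} and Corollary \ref{cor:critPoints:ln}, the same elimination of the split shape via the trailing triple and Lemma \ref{lem:threeParams}, and the same dichotomy $\lambda_0\lessgtr\frac{2}{\pi^2}$ for the constant sequence handled respectively by Lemma \ref{lem:twoParams} and the one-variable inequality you defer (which is precisely Lemma \ref{app:lem4} in the appendix).
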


We close this section with the following observation, which, together with Remark \ref{rem:estOptimality} above, shows that the
estimate from Theorem \ref{thm:mainResult} is indeed stronger than the previously known estimates.

\begin{remark}\label{rem:AMvsS}
 It follows from the previous considerations that
 \[
  x < T(M_*(x))\quad\text{ for }\quad \frac{4}{\pi^2+4}<x\le c_*\,,
 \]
 where $c_*\in\bigl(0,\frac{1}{2}\bigr)$ and $M_*\colon[0,c_*]\to\bigl[0,\frac{\pi}{2}\bigr]$ are given by \eqref{eq:AMConst} and
 \eqref{eq:AMFunc}, respectively. Indeed, let $x\in\bigl(\frac{4}{\pi^2+4},c_*\bigr]$ be arbitrary and set
 $\theta:=M_*(x)>\frac{1}{2}\arcsin\bigl(\frac{4\pi}{\pi^2+4}\bigr)$. Define $\lambda\in D_2(\theta)$ by
 \[
  \lambda:=\Bigl(\frac{4}{\pi^2+4},\frac{1}{\pi}\sin\Bigl(2\theta-\arcsin\Bigl(\frac{4\pi}{\pi^2+4}\Bigr)\Bigr),0,\dots\Bigr)
  \quad\text{ if }\quad \theta\le\arcsin\Bigl(\frac{4\pi}{\pi^2+4}\Bigr)
 \]
 and by
 \[
  \lambda:=\Bigl(\frac{4}{\pi^2+4},\frac{4}{\pi^2+4},\frac{1}{\pi}\sin\Bigl(2\theta-2\arcsin\Bigl(\frac{4\pi}{\pi^2+4}\Bigr)\Bigr),
  0,\dots\Bigr)\quad\text{ if }\quad \theta>\arcsin\Bigl(\frac{4\pi}{\pi^2+4}\Bigr)\,.
 \]
 Using representation \eqref{eq:seqExplRepr}, a straightforward calculation shows that in both cases one has
 \[
  x=M_*^{-1}(\theta)=\max W(\lambda)\,.
 \]
 
 If $\theta=\arcsin\bigl(\frac{4\pi}{\pi^2+4}\bigr)>\vartheta$ (cf.\ Remark \ref{rem:kappaRepl}), that is,
 $\lambda=\bigl(\frac{4}{\pi^2+4},\frac{4}{\pi^2+4},0,\dots\bigr)$, then it follows from Lemma \ref{lem:threeParams} that
 $\max W(\lambda) \le T_1(\theta) < T_2(\theta)$.
 
 If $\theta\neq\arcsin\bigl(\frac{4\pi}{\pi^2+4}\bigr)$, then the inequality $\max W(\lambda)<T_2(\theta)$ holds since, in this
 case, $\lambda$ is none of the critical points from Lemma \ref{lem:critPoints}.
 
 So, in either case one has $x=\max W(\lambda)<T_2(\theta)=T(\theta)=T(M_*(x))$.
\end{remark}

\begin{appendix}

\section{Proofs of some inequalities}\label{app:sec:inequalities}

\begin{lemma}\label{app1:lem1}
 The following inequalities hold:
 \begin{enumerate}
  \renewcommand{\theenumi}{\alph{enumi}}
  \item $\frac{2}{\pi^2} + \frac{2\pi-4}{\pi^2}\sin^2\theta < \frac{2}{\pi}\left(1-\frac{1}{\pi}\sin\theta\right)\sin\theta$
        \quad\text{ for }\quad $\arcsin\bigl(\frac{1}{\pi-1}\bigr) < \theta < \frac{\pi}{2}$\,,\label{app:lem1:a}
  \item $\frac{1}{\pi}\sin(2\theta) < \frac{2}{\pi^2} + \frac{\pi^2-4}{2\pi^2}\sin^2\theta$\quad\text{ for }\quad
        $\arctan\bigl(\frac{2}{\pi}\bigr)<\theta\le\frac{\pi}{4}$\,,\label{app:lem1:b}
  \item $\frac{2}{\pi}\left(1-\frac{1}{\pi}\sin\theta\right)\sin\theta < \frac{2}{\pi^2} + \frac{\pi^2-4}{2\pi^2}\sin^2\theta$
        \quad\text{ for }\quad $\theta\neq\arcsin\bigl(\frac{2}{\pi}\bigr)$\,,\label{app:lem1:c}
  \item $\frac{2}{\pi}\left(1-\frac{1}{\pi}\sin\theta\right)\sin\theta < \frac{1}{\pi}\sin(2\theta)$\quad\text{ for }\quad
        $0<\theta<2\arctan\bigl(\frac{1}{\pi}\bigr)$\,,\label{app:lem1:d}
  \item $\frac{2}{\pi}\left(1-\frac{1}{\pi}\sin\theta\right)\sin\theta > \frac{1}{\pi}\sin(2\theta)$\quad\text{ for }\quad
        $2\arctan\bigl(\frac{1}{\pi}\bigr) < \theta < \pi$\,.\label{app:lem1:e}
 \end{enumerate}
 
 \begin{proof}
  One has
  \[
   \begin{aligned}
    \frac{2}{\pi}\Bigl(1-\frac{1}{\pi}\sin\theta\Bigr)\sin\theta &- \Bigl(\frac{2}{\pi^2}+\frac{2\pi-4}{\pi^2}\sin^2\theta\Bigr)\\
    &= - \frac{2(\pi-1)}{\pi^2} \Bigl(\sin^2\theta - \frac{\pi}{\pi-1}\sin\theta+\frac{1}{\pi-1}\Bigr)\\
    &= - \frac{2(\pi-1)}{\pi^2} \biggl(\Bigl(\sin\theta-\frac{\pi}{2(\pi-1)}\Bigr)^2-\frac{(\pi-2)^2}{4(\pi-1)^2}\biggr),
   \end{aligned}
  \]
  which is strictly positive if and only if
  \[
   \Bigl(\sin\theta-\frac{\pi}{2(\pi-1)}\Bigr)^2 < \frac{(\pi-2)^2}{4(\pi-1)^2}\,.
  \]
  A straightforward analysis shows that the last inequality holds for $\arcsin\bigl(\frac{1}{\pi-1}\bigr)<\theta<\frac{\pi}{2}$,
  which proves (\ref{app:lem1:a}).

  For $\theta_0:=\arctan\bigl(\frac{2}{\pi}\bigr)=\frac{1}{2} \arcsin\bigl(\frac{4\pi}{\pi^2+4}\bigr)$ one has
  $\sin(2\theta_0)=\frac{4\pi}{\pi^2+4}$ and $\sin^2\theta_0=\frac{4}{\pi^2+4}$. Thus, the inequality in (b) becomes an equality
  for $\theta=\theta_0$. Therefore, in order to show (b), it suffices to show that the corresponding estimate holds for the
  derivatives of both sides of the inequality, that is,
  \[
   \frac{2}{\pi}\cos(2\theta) < \frac{\pi^2-4}{2\pi^2}\sin(2\theta)\quad\text{ for }\quad \theta_0 < \theta < \frac{\pi}{4}\,.
  \]
  This inequality is equivalent to $\tan(2\theta)>\frac{4\pi}{\pi^2-4}$ for $\theta_0<\theta<\frac{\pi}{4}$, which, in turn,
  follows from $\tan(2\theta_0)=\frac{2\tan\theta_0}{1-\tan^2\theta_0}=\frac{4\pi}{\pi^2-4}$. This implies (\ref{app:lem1:b}).
  
  The claim (\ref{app:lem1:c}) follows immediately from
  \[
   \frac{2}{\pi^2} + \frac{\pi^2-4}{2\pi^2}\sin^2\theta - \frac{2}{\pi}\Bigl(1-\frac{1}{\pi}\sin\theta\Bigr)\sin\theta =
   \frac{1}{2}\Bigl(\frac{2}{\pi}-\sin\theta\Bigr)^2\,.
  \]
  
  Finally, observe that
  \begin{equation}\label{app:eq:lem1:de}
   \frac{1}{\pi}\sin(2\theta) - \frac{2}{\pi}\Bigl(1-\frac{1}{\pi}\sin\theta\Bigr)\sin\theta=
   \frac{2}{\pi}\Bigl(\cos\theta-1+\frac{1}{\pi}\sin\theta\Bigr)\sin\theta\,.
  \end{equation}
  For $0<\theta<\pi$, the right-hand side of \eqref{app:eq:lem1:de} is positive if and only if
  $\frac{1-\cos\theta}{\sin\theta}=\tan\bigl(\frac{\theta}{2}\bigr)$ is less than $\frac{1}{\pi}$. This is the case if and only if
  $\theta < 2\arctan\bigl(\frac{1}{\pi}\bigr)$, which proves (\ref{app:lem1:d}). The proof of claim (\ref{app:lem1:e}) is
  analogous.
 \end{proof}%
\end{lemma}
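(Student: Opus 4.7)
The plan is to verify each of the five inequalities by elementary algebraic manipulation of the difference of the two sides. In each case, the strategy is to reduce the trigonometric expression to a polynomial in $\sin\theta$ (or $\sin(\theta/2)$) and then either (i) complete the square, (ii) locate the roots of a quadratic, or (iii) exploit factorizations that isolate a single sign-determining factor. Most cases are self-contained; only (b) will require a derivative argument.

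First I would handle (a) and (c) together, since both yield to a direct algebraic reduction. For (a), forming the difference and clearing denominators produces a quadratic in $s := \sin\theta$ of the form $-\tfrac{2(\pi-1)}{\pi^2}(s^2 - \tfrac{\pi}{\pi-1}s + \tfrac{1}{\pi-1})$; the two roots of the inner polynomial work out to $s = 1$ and $s = 1/(\pi-1)$, so the expression is strictly positive precisely for $\tfrac{1}{\pi-1} < \sin\theta < 1$, which gives exactly the claimed $\theta$-range. For (c), the same kind of manipulation gives the identity
\[
 \frac{2}{\pi^2} + \frac{\pi^2-4}{2\pi^2}\sin^2\theta - \frac{2}{\pi}\Bigl(1-\frac{1}{\pi}\sin\theta\Bigr)\sin\theta
 = \frac{1}{2}\Bigl(\sin\theta - \frac{2}{\pi}\Bigr)^2,
\]
which is nonnegative and vanishes precisely at $\sin\theta = 2/\pi$.

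Next, parts (d) and (e) follow simultaneously from a single factorization. Writing
\[
 \frac{1}{\pi}\sin(2\theta) - \frac{2}{\pi}\Bigl(1-\frac{1}{\pi}\sin\theta\Bigr)\sin\theta
 = \frac{2}{\pi}\Bigl(\cos\theta - 1 + \frac{1}{\pi}\sin\theta\Bigr)\sin\theta,
\]
I would apply the half-angle identities $\cos\theta - 1 = -2\sin^2(\theta/2)$ and $\sin\theta = 2\sin(\theta/2)\cos(\theta/2)$ to rewrite the bracketed factor as $2\sin(\theta/2)\cos(\theta/2)\bigl(\tfrac{1}{\pi} - \tan(\theta/2)\bigr)$. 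For $0 < \theta < \pi$ the other factors are positive, so the sign of the whole expression is the sign of $\tfrac{1}{\pi} - \tan(\theta/2)$, which flips from positive to negative exactly at $\theta = 2\arctan(1/\pi)$.

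The main obstacle is (b), because the difference of the two sides does not factor nicely. Here the plan is to note that at $\theta_0 := \arctan(2/\pi) = \tfrac{1}{2}\arcsin\bigl(\tfrac{4\pi}{\pi^2+4}\bigr)$ one has $\sin(2\theta_0) = \tfrac{4\pi}{\pi^2+4}$ and $\sin^2\theta_0 = \tfrac{4}{\pi^2+4}$, so both sides of (b) agree at $\theta_0$. It then suffices to check that the derivative inequality
\[
 \frac{2}{\pi}\cos(2\theta) < \frac{\pi^2-4}{2\pi^2}\sin(2\theta)
\]
holds on $(\theta_0, \tfrac{\pi}{4})$. Rearranging, this is $\tan(2\theta) > \tfrac{4\pi}{\pi^2-4}$; the double-angle formula $\tan(2\theta_0) = \tfrac{2\tan\theta_0}{1 - \tan^2\theta_0} = \tfrac{4\pi}{\pi^2-4}$ shows this holds with equality at $\theta_0$, and $\tan(2\theta)$ is strictly increasing on $(0,\tfrac{\pi}{4})$, giving the strict inequality throughout. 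Combined with equality of the primitives at $\theta_0$, this yields (b).
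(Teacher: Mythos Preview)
Your proof is correct and matches the paper's argument essentially line by line. The only cosmetic differences are that in (a) you factor the quadratic in $\sin\theta$ (roots $1$ and $1/(\pi-1)$) where the paper completes the square, and in (d)/(e) you substitute the half-angle formulas explicitly whereas the paper invokes the identity $\tfrac{1-\cos\theta}{\sin\theta}=\tan(\theta/2)$ directly; parts (b) and (c) are identical to the paper's proof.
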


\begin{lemma}\label{app:lem2}
 There is a unique $\vartheta\in\bigl(\arcsin\bigl(\frac{2}{\pi}\bigr),\frac{\pi}{2}\bigr)$ such that
 \[
  \Bigl(1-\frac{2}{\pi}\sin\theta\Bigr)^2 < \biggl(1-\frac{2}{\pi}\sin\Bigl(\frac{2\theta}{3}\Bigr)\biggr)^3\quad\text{ for }\quad
  0 < \theta < \vartheta
 \]
 and
 \[
  \Bigl(1-\frac{2}{\pi}\sin\theta\Bigr)^2 > \biggl(1-\frac{2}{\pi}\sin\Bigl(\frac{2\theta}{3}\Bigr)\biggr)^3\quad\text{ for }\quad
  \vartheta < \theta \le\frac{\pi}{2}\,.
 \]
 
 \begin{proof}
  Define $u,v,w\colon\R\to\R$ by
  \[
   u(\theta):=\sin\Bigl(\frac{2\theta}{3}\Bigr),\quad
   v(\theta):=\frac{\pi}{2}-\frac{\pi}{2}\Bigl(1-\frac{2}{\pi}\sin\theta\Bigr)^{2/3}\,,\quad\text{ and }\quad
   w(\theta):=u(\theta) - v(\theta)\,.
  \]
  Obviously, the claim is equivalent to the existence of $\vartheta\in\bigl(\arcsin\bigl(\frac{2}{\pi}\bigr),\frac{\pi}{2}\bigr)$
  such that $w(\theta)<0$ for $0<\theta<\vartheta$ and $w(\theta)>0$ for $\vartheta<\theta\le\frac{\pi}{2}$.
  
  Observe that $u'''(\theta)=-\frac{8}{27}\cos\bigl(\frac{2\theta}{3}\bigr)<0$ for $0\le\theta\le\frac{\pi}{2}$. In particular,
  $u''$ is strictly decreasing on the interval $\bigl[0,\frac{\pi}{2}\bigr]$. Moreover, $u'''$ is strictly increasing on
  $\bigl[0,\frac{\pi}{2}\bigr]$, so that the inequality $u'''\ge u'''(0)=-\frac{8}{27}>-\frac{1}{2}$ holds on
  $\bigl[0,\frac{\pi}{2}\bigr]$.
  
  One computes
  \begin{equation}\label{app:eq:lem2:d4v}
   v^{(4)}(\theta) = \frac{2\pi^{1/3}}{81}\,\frac{p(\sin\theta)}{(\pi-2\sin\theta)^{10/3}}\quad\text{ for }\quad
   0\le\theta\le\frac{\pi}{2}\,,
  \end{equation}
  where
  \[
   p(x)=224-72\pi^2 + 27\pi^3x - (160+36\pi^2)x^2 + 108\pi x^3 - 64x^4\,.
  \]
  The polynomial $p$ is strictly increasing on $[0,1]$ and has exactly one root in the interval $(0,1)$. Combining this with
  equation \eqref{app:eq:lem2:d4v}, one obtains that $v^{(4)}$ has a unique zero in $\bigl(0,\frac{\pi}{2}\bigr)$ and that
  $v^{(4)}$ changes its sign from minus to plus there. Observing that $v'''(0)<-\frac{1}{2}$ and $v'''\bigl(\frac{\pi}{2}\bigr)=0$,
  this yields $v'''< 0$ on $\bigl[0,\frac{\pi}{2}\bigr)$, that is, $v''$ is strictly decreasing on $\bigl[0,\frac{\pi}{2}\bigr]$.
  Moreover, it is easy to verify that $v'''\bigl(\frac{\pi}{3}\bigr)< v'''(0)$, so that $v'''\le v'''(0)<-\frac{1}{2}$ on
  $\bigl[0,\frac{\pi}{3}\bigr]$. Since $u'''>-\frac{1}{2}$ on $\bigl[0,\frac{\pi}{2}\bigr]$ as stated above, it follows that
  $w'''=u'''-v'''>0$ on $\bigl[0,\frac{\pi}{3}\bigr]$, that is, $w''$ is strictly increasing on $\bigl[0,\frac{\pi}{3}\bigr]$.
  
  Recall that $u''$ and $v''$ are both decreasing functions on $\bigl[0,\frac{\pi}{2}\bigr]$. Observing the inequality
  $u''\bigl(\frac{\pi}{2}\bigr) > v''\bigl(\frac{\pi}{3}\bigr)$, one deduces that
  \begin{equation}\label{app:eq:lem2:ddw}
   w''(\theta) = u''(\theta) - v''(\theta) \ge u''\Bigl(\frac{\pi}{2}\Bigr) - v''\Bigl(\frac{\pi}{3}\Bigr)>0\quad\text{ for }\quad
   \theta\in\Bigl[\frac{\pi}{3},\frac{\pi}{2}\Bigr]\,.
  \end{equation}
  Moreover, one has $w''(0)<0$. Combining this with \eqref{app:eq:lem2:ddw} and the fact that $w''$ is strictly increasing on
  $\bigl[0,\frac{\pi}{3}\bigr]$, one concludes that $w''$ has a unique zero in the interval $\bigl(0,\frac{\pi}{2}\bigr)$ and that
  $w''$ changes its sign from minus to plus there. Since $w'(0)=0$ and $w'\bigl(\frac{\pi}{2}\bigr)=\frac{1}{3}>0$, it follows that
  $w'$ has a unique zero in $\bigl(0,\frac{\pi}{2}\bigr)$, where it changes its sign from minus to plus. Finally, observing that
  $w(0)=0$ and $w\bigl(\frac{\pi}{2}\bigr)>0$, in the same way one arrives at the conclusion that $w$ has a unique zero
  $\vartheta\in\bigl(0,\frac{\pi}{2}\bigr)$ such that $w(\theta)<0$ for $0<\theta<\vartheta$ and $w(\theta)>0$ for
  $\vartheta<\theta<\frac{\pi}{2}$. As a result of $w\bigl(\arcsin\bigl(\frac{2}{\pi}\bigr)\bigr)<0$, one has
  $\vartheta>\arcsin\bigl(\frac{2}{\pi}\bigr)$.
 \end{proof}%
\end{lemma}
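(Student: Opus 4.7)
The plan is to reduce the two-sided claim to the analysis of a single auxiliary function. Since both $1-\tfrac{2}{\pi}\sin\theta$ and $1-\tfrac{2}{\pi}\sin(\tfrac{2\theta}{3})$ lie in $(0,1]$ on $[0,\pi/2]$, taking cube roots and rearranging turns the inequality $(1-\tfrac{2}{\pi}\sin\theta)^2\gtrless(1-\tfrac{2}{\pi}\sin(\tfrac{2\theta}{3}))^3$ into $u(\theta)\gtrless v(\theta)$, where $u(\theta):=\sin(\tfrac{2\theta}{3})$ and $v(\theta):=\tfrac{\pi}{2}\bigl(1-(1-\tfrac{2}{\pi}\sin\theta)^{2/3}\bigr)$. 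So it suffices to exhibit a unique $\vartheta\in(\arcsin(\tfrac{2}{\pi}),\tfrac{\pi}{2})$ such that $w:=u-v$ is negative on $(0,\vartheta)$ and positive on $(\vartheta,\tfrac{\pi}{2}]$.

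The boundary data are favourable: $w(0)=0$, while direct evaluation gives $w(\tfrac{\pi}{2})=\tfrac{\sqrt{3}}{2}-\tfrac{\pi}{2}\bigl(1-(1-\tfrac{2}{\pi})^{2/3}\bigr)>0$ and $w(\arcsin(\tfrac{2}{\pi}))<0$. Differentiating at $0$ yields $w'(0)=0$ (because $u'(0)=v'(0)=\tfrac{2}{3}$) and $w''(0)=-\tfrac{4}{9\pi}<0$, so $w$ starts strictly below zero just to the right of $0$; at the right endpoint $w'(\tfrac{\pi}{2})=\tfrac{1}{3}>0$. The plan is to show that $w$ is strictly decreasing and then strictly increasing on $[0,\tfrac{\pi}{2}]$, which together with these endpoint values forces exactly one sign change, from $-$ to $+$; the inequality $w(\arcsin(\tfrac{2}{\pi}))<0$ then locates $\vartheta$ to the right of $\arcsin(\tfrac{2}{\pi})$.

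To obtain the ``decreasing-then-increasing'' shape, I would bootstrap downward from the second derivative. Showing that $w''$ has a unique zero in $(0,\tfrac{\pi}{2})$ at which it changes sign from $-$ to $+$ gives, via $w'(0)=0$ and $w'(\tfrac{\pi}{2})>0$, a unique zero of $w'$ in $(0,\tfrac{\pi}{2})$ with the same sign change; one further step using $w(0)=0$ and $w(\tfrac{\pi}{2})>0$ then delivers the desired shape of $w$, and hence the conclusion.

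The hard part is the sign analysis of $w''$, because the fractional power $(1-\tfrac{2}{\pi}\sin\theta)^{2/3}$ in $v$ obstructs any clean global formula. I would partition $[0,\tfrac{\pi}{2}]$ into two pieces. On a left piece such as $[0,\tfrac{\pi}{3}]$, the plan is to show $w'''>0$: here $u'''(\theta)=-\tfrac{8}{27}\cos(\tfrac{2\theta}{3})\ge-\tfrac{8}{27}>-\tfrac{1}{2}$, and one needs the complementary bound $v'''<-\tfrac{1}{2}$, which follows from monotonicity of $v'''$ on that subinterval, reducing in turn to the sign of $v^{(4)}$. A direct computation should express $v^{(4)}$ as a positive constant times $p(\sin\theta)/(\pi-2\sin\theta)^{10/3}$ for an explicit quartic polynomial $p$; verifying that $p$ is monotone on $[0,1]$ with a single root there isolates the unique sign change of $v^{(4)}$. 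On the complementary piece $[\tfrac{\pi}{3},\tfrac{\pi}{2}]$, I would use that $u''$ is decreasing and that $v''$ is decreasing (the latter following from the just-obtained sign information on $v'''$), and reduce $w''>0$ to the single numerical comparison $u''(\tfrac{\pi}{2})>v''(\tfrac{\pi}{3})$. Glueing the two regions yields the unique sign change of $w''$, completing the argument.
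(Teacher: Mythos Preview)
Your proposal is correct and follows essentially the same route as the paper: the identical reduction to $w=u-v$ via cube roots, the same partition at $\theta=\tfrac{\pi}{3}$, the same comparison $u'''>-\tfrac12>v'''$ on the left piece (with the sign of $v^{(4)}$ controlled by an explicit quartic in $\sin\theta$), the same numerical check $u''(\tfrac{\pi}{2})>v''(\tfrac{\pi}{3})$ on the right piece, and the same two-step bootstrap through $w'$ to $w$. The only slight imprecision is your phrase ``monotonicity of $v'''$ on that subinterval'': since $v^{(4)}$ may change sign inside $[0,\tfrac{\pi}{3}]$, the paper instead uses that $v'''$ is first decreasing then increasing, so its maximum on $[0,\tfrac{\pi}{3}]$ is attained at an endpoint, and then checks $v'''(\tfrac{\pi}{3})<v'''(0)<-\tfrac12$.
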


\begin{lemma}\label{app:lem3}
 For $x\in\bigl(\frac{2}{\pi^2},\frac{4}{\pi^2+4}\bigr)$ let
 \begin{equation}\label{app:eq:lem3:alphay}
  \alpha := \frac{\sqrt{1-\pi^2x^2}}{1-2x}\quad\text{ and }\quad
  y:=\frac{4\alpha^2}{\pi^2+4\alpha^2} - x \,.
 \end{equation}
 Then, $\theta:=\arcsin(\pi x)+\frac{1}{2}\arcsin(\pi y)$ satisfies the inequalities
 \begin{equation}\label{app:eq:lem3:theta}
  \frac{3}{2}\arcsin\Bigl(\frac{2}{\pi}\Bigr) < \theta \le
  \arcsin\Bigl(\frac{12+\pi^2}{8\pi}\Bigr)+\frac{1}{2}\arcsin\Bigl(\frac{12-\pi^2}{4\pi}\Bigr)
 \end{equation}
 and
 \begin{equation}\label{app:eq:lem3:estimate}
  \left(1-\frac{2}{\pi}\sin\Bigl(\frac{2\theta}{3}\Bigr)\right)^3 < (1-2x)^2(1-2y)\,.
 \end{equation}
 
 \begin{proof}
  One has $1<\alpha<m:=\frac{\pi}{2}\tan\bigl(\arcsin\bigl(\frac{2}{\pi}\bigr)\bigr)$, $y\in\bigl(0,\frac{2}{\pi^2}\bigr)$, and
  $\alpha=\frac{\sqrt{1-\pi^2y^2}}{1-2y}$ (cf.\ Lemma \ref{lem:critPoints}). Moreover, taking into account that
  $\alpha^2=\frac{1-\pi^2x^2}{(1-2x)^2}$ by \eqref{app:eq:lem3:alphay}, one computes
  \begin{equation}\label{app:eq:lem3:y}
   y=\frac{4-(\pi^2+4)x}{\pi^2+4-4\pi^2x}\,.
  \end{equation}
  Observe that $\alpha\to m$ and $y\to\frac{2}{\pi^2}$ as $x\to\frac{2}{\pi^2}$, and that $\alpha\to1$ and $y\to 0$ as
  $x\to\frac{4}{\pi^2+4}$. With this and taking into account \eqref{app:eq:lem3:y}, it is convenient to consider
  $\alpha=\alpha(x)$, $y=y(x)$, and $\theta=\theta(x)$ as continuous functions of the variable
  $x\in\bigl[\frac{2}{\pi^2},\frac{4}{\pi^2+4}\bigr]$.
  
  Straightforward calculations show that
  \[
   1-2y(x) = \frac{\pi^2-4}{\pi^2+4-4\pi^2x}\cdot (1-2x)\quad\text{ for }\quad \frac{2}{\pi^2}\le x\le \frac{4}{\pi^2+4}\,,
  \]
  so that
  \[
   y'(x)=-\frac{(\pi^2-4)^2}{(\pi^2+4-4\pi^2x)^2} = -\frac{(1-2y(x))^2}{(1-2x)^2}\quad\text{ for }\quad
   \frac{2}{\pi^2}< x< \frac{4}{\pi^2+4}\,.
  \]
  Taking into account that $\alpha(x)=\alpha\bigl(y(x)\bigr)$, that is,
  $\frac{\sqrt{1-\pi^2x^2}}{\sqrt{1-\pi^2y(x)^2}}=\frac{1-2x}{1-2y(x)}$, this leads to
  \begin{equation}\label{app:eq:lem3:dtheta}
   \begin{aligned}
    \theta'(x)
    &=\frac{\pi}{\sqrt{1-\pi^2x^2}} + \frac{\pi y'(x)}{2\sqrt{1-\pi^2y(x)^2}} =
      \frac{\pi}{2\sqrt{1-\pi^2x^2}}\left(2+\frac{1-2x}{1-2y(x)}\cdot y'(x)\right)\\
    &=\frac{\pi}{2\sqrt{1-\pi^2x^2}} \left(2-\frac{\pi^2-4}{\pi^2+4-4\pi^2x}\right) =
      \frac{\pi}{2\sqrt{1-\pi^2x^2}}\cdot\frac{12+\pi^2-8\pi^2x}{\pi^2+4-4\pi^2x}\,.
   \end{aligned}
  \end{equation}
  In particular, $x=\frac{12+\pi^2}{8\pi^2}$ is the only critical point of $\theta$ in the interval
  $\bigl(\frac{2}{\pi^2},\frac{4}{\pi^2+4}\bigr)$ and $\theta'$ changes its sign from plus to minus there. Moreover, using
  $y\bigl(\frac{2}{\pi^2}\bigr)=\frac{2}{\pi^2}$ and $y\bigl(\frac{4}{\pi^2+4}\bigr)=0$, one has
  $\theta\bigl(\frac{2}{\pi^2}\bigr)=\frac{3}{2}\arcsin\bigl(\frac{2}{\pi}\bigr)<\arcsin\bigl(\frac{4\pi}{\pi^2+4}\bigr)=
  \theta\bigl(\frac{4}{\pi^2+4}\bigr)$, so that
  \[
   \frac{3}{2}\arcsin\Bigl(\frac{2}{\pi}\Bigr) < \theta(x) \le \theta\Bigl(\frac{12+\pi^2}{8\pi^2}\Bigr)\quad\text{ for }\quad
   \frac{2}{\pi^2} < x < \frac{4}{\pi^2+4}\,.
  \]
  Since $y\bigl(\frac{12+\pi^2}{8\pi^2}\bigr)=\frac{12-\pi^2}{4\pi^2}$, this proves the two-sided inequality
  \eqref{app:eq:lem3:theta}.
  
  Further calculations show that
  \begin{equation}\label{app:eq:lem3:d2theta}
   \theta''(x) = \frac{\pi^3}{2}\,\frac{p(x)}{(1-\pi^2x^2)^{3/2}\left(\pi^2+4-4\pi^2x\right)^2}\quad\text{ for }\quad
   \frac{2}{\pi^2} < x < \frac{4}{\pi^2+4}\,,
  \end{equation}
  where
  \[
   p(x)=16-4\pi^2 + (48+16\pi^2+\pi^4)x - 8\pi^2(12+\pi^2)x^2 + 32\pi^4x^3\,.
  \]
  The polynomial $p$ is strictly negative on the interval $\bigl[\frac{2}{\pi^2},\frac{4}{\pi^2+4}\bigr]$, so that $\theta'$ is
  strictly decreasing.
  
  Define $w\colon\bigl[\frac{2}{\pi^2},\frac{4}{\pi^2+4}\bigr]\to\R$ by
  \[
   w(x):=(1-2x)^2\cdot \bigl(1-2y(x)\bigr) - \biggl(1-\frac{2}{\pi}\sin\Bigl(\frac{2\theta(x)}{3}\Bigr)\biggr)^3\,.
  \]
  The claim \eqref{app:eq:lem3:estimate} is equivalent to the inequality $w(x)>0$ for $\frac{2}{\pi^2}<x<\frac{4}{\pi^2+4}$. Since
  $y\bigl(\frac{2}{\pi^2}\bigr)=\frac{2}{\pi^2}$ and, hence,
  $\theta\bigl(\frac{2}{\pi^2}\bigr)=\frac{3}{2}\arcsin\bigl(\frac{2}{\pi}\bigr)$, one has $w\bigl(\frac{2}{\pi^2}\bigr)=0$.
  Moreover, a numerical evaluation gives $w\bigl(\frac{4}{\pi^2+4}\bigr)>0$. Therefore, in order to prove $w(x)>0$ for
  $\frac{2}{\pi^2}<x<\frac{4}{\pi^2+4}$, it suffices to show that $w$ has exactly one critical point in the interval
  $\bigl(\frac{2}{\pi^2}, \frac{4}{\pi^2+4}\bigr)$ and that $w$ takes its maximum there.
  
  Using \eqref{app:eq:lem3:dtheta} and taking into account that $\sqrt{1-\pi^2x^2}=\alpha(x)(1-2x)$, one computes
  \[
   \begin{aligned}
    \frac{\dd}{\dd x} (1-2x)^2\bigl(1-2y(x)\bigr)
    &= -4(1-2x)\bigl(1-2y(x)\bigr) - 2(1-2x)^2y'(x)\\
    &= -2(1-2x)\bigl(1-2y(x)\bigr)\left(2+\frac{1-2x}{1-2y(x)}\cdot y'(x)\right)\\
    &= -\frac{4}{\pi}(1-2x)^2\bigl(1-2y(x)\bigr)\alpha(x)\theta'(x)\,.
   \end{aligned}
  \]
  Hence, for $\frac{2}{\pi^2}<x<\frac{4}{\pi^2+4}$ one obtains
  \[
   \begin{aligned}
    w'(x)
    &=-\frac{4}{\pi}\theta'(x)\cdot \left(\alpha(x) (1-2x)^2\bigl(1-2y(x)\bigr)-\left(1-\frac{2}{\pi}\sin\Bigl(\frac{2\theta(x)}
      {3}\Bigr)\right)^2\cos\Bigl(\frac{2\theta(x)}{3}\Bigr)\right)\\
    &=-\frac{4}{\pi}\theta'(x)\cdot \bigl(u(x)-v(x)\bigr)\,,
   \end{aligned}
  \]
  where $u,v\colon\bigl[\frac{2}{\pi^2},\frac{4}{\pi^2+4}\bigr]\to\R$ are given by
  \[
   u(x):=\alpha(x) (1-2x)^2\bigl(1-2y(x)\bigr)\,,\quad
   v(x):=\left(1-\frac{2}{\pi}\sin\Bigl(\frac{2\theta(x)}{3}\Bigr)\right)^2\cos\Bigl(\frac{2\theta(x)}{3}\Bigr)\,.
  \]

  Suppose that the difference $u(x)-v(x)$ is strictly negative for all $x\in\bigl(\frac{2}{\pi^2},\frac{4}{\pi^2+4}\bigr)$. In this
  case, $w'$ and $\theta'$ have the same zeros on $\bigl(\frac{2}{\pi^2},\frac{4}{\pi^2+4}\bigr)$, and $w'(x)$ and $\theta'(x)$
  have the same sign for all $x\in\bigl(\frac{2}{\pi^2},\frac{4}{\pi^2+4}\bigr)$. Combining this with \eqref{app:eq:lem3:dtheta},
  one concludes that $x=\frac{12+\pi^2}{8\pi^2}$ is the only critical point of $w$ in the interval
  $\bigl(\frac{2}{\pi^2},\frac{4}{\pi^2+4}\bigr)$ and that $w$ takes its maximum in this point.
  
  Hence, it remains to show that the difference $u-v$ is indeed strictly negative on
  $\bigl(\frac{2}{\pi^2},\frac{4}{\pi^2+4}\bigr)$. Since
  $\alpha\bigl(\frac{2}{\pi^2}\bigr)=\frac{\pi}{2}\tan\bigl(\arcsin\bigl(\frac{2}{\pi}\bigr)\bigr)$,
  $y\bigl(\frac{2}{\pi^2}\bigr)=\frac{2}{\pi^2}$, and
  $\theta\bigl(\frac{2}{\pi^2}\bigr)=\frac{3}{2}\arcsin\bigl(\frac{2}{\pi}\bigr)$, it is easy to verify that
  $u\bigl(\frac{2}{\pi^2}\bigr)=v\bigl(\frac{2}{\pi^2}\bigr)$ and $u'\bigl(\frac{2}{\pi^2}\bigr)=v'\bigl(\frac{2}{\pi^2}\bigr)<0$.
  Therefore, it suffices to show that $u'<v'$ holds on the whole interval $\bigl(\frac{2}{\pi^2},\frac{4}{\pi^2+4}\bigr)$.
  
  One computes
  \begin{equation}\label{app:eq:lem3:d2u}
   u''(x) = \frac{(\pi^2-4)q(x)}{(1-\pi^2x^2)^{3/2}(\pi^2+4-4\pi^2x)^3}
  \end{equation}
  where
  \[
   \begin{aligned}
    q(x) &= (128-80\pi^2-\pi^6)+12\pi^2(\pi^2+4)^2x-12\pi^2(7\pi^4+24\pi^2+48)x^2\\
         &\quad+32\pi^4(5\pi^2+12)x^3+24\pi^4(\pi^4+16)x^4 -96\pi^6(\pi^2+4)x^5 + 128\pi^8x^6\,.
   \end{aligned}
  \]
  A further analysis shows that $q''$, which is a polynomial of degree $4$, has exactly one root in the interval
  $\bigl[\frac{2}{\pi^2},\frac{4}{\pi^2+4}\bigr]$ and that $q''$ changes its sign from minus to plus there. Moreover, $q'$ takes a
  positive value in this root of $q''$, so that $q'>0$ on $\bigl[\frac{2}{\pi^2},\frac{4}{\pi^2+4}\bigr]$, that is, $q$ is strictly
  increasing on this interval. Since $q\bigl(\frac{4}{\pi^2+4}\bigr)<0$, one concludes that $q<0$ on
  $\bigl[\frac{2}{\pi^2},\frac{4}{\pi^2+4}\bigr]$. It follows from \eqref{app:eq:lem3:d2u} that $u''<0$ on
  $\bigl(\frac{2}{\pi^2},\frac{4}{\pi^2+4}\bigr)$, so that $u'$ is strictly decreasing. In particular, one has
  $u'<u'\bigl(\frac{2}{\pi^2}\bigr)<0$ on $\bigl(\frac{2}{\pi^2}, \frac{4}{\pi^2+4}\bigr)$.
  
  A straightforward calculation yields
  \begin{equation}\label{app:eq:lem3:dv}
   v'(x) = -\frac{2}{3}\biggl(1-\frac{2}{\pi}\sin\Bigl(\frac{2\theta(x)}{3}\Bigr)\biggr)\cdot \theta'(x)\cdot r\biggl(\sin\Bigl(
   \frac{2\theta(x)}{3}\Bigr)\biggr)\,,
  \end{equation}
  where $r(t)=\frac{4}{\pi}+t-\frac{6}{\pi}t^2$. The polynomial $r$ is positive and strictly decreasing on the interval
  $\bigl[\frac{1}{2},1]$. Moreover, taking into account \eqref{app:eq:lem3:theta}, one has
  $\frac{1}{2}<\sin\bigl(\frac{2\theta(x)}{3}\bigr)<1$. Combining this with equation \eqref{app:eq:lem3:dv}, one deduces that
  $v'(x)$ has the opposite sign of $\theta'(x)$ for all $\frac{2}{\pi^2}<x<\frac{4}{\pi^2+4}$. In particular, by
  \eqref{app:eq:lem3:dtheta} it follows that $v'(x)\ge0$ if $x\ge\frac{12+\pi^2}{8\pi^2}$. Since $u'<0$ on
  $\bigl(\frac{2}{\pi^2},\frac{4}{\pi^2+4}\bigr)$, this implies that $v'(x)>u'(x)$ for
  $\frac{12+\pi^2}{8\pi^2}\le x<\frac{4}{\pi^2+4}$. If $\frac{2}{\pi^2}<x<\frac{12+\pi^2}{8\pi^2}$, then one has $\theta'(x)>0$. In
  particular, $\theta$ is strictly increasing on $\bigl(\frac{2}{\pi^2},\frac{12+\pi^2}{8\pi^2}\bigr)$. Recall, that $\theta'$ is
  strictly decreasing by \eqref{app:eq:lem3:d2theta}. Combining all this with equation \eqref{app:eq:lem3:dv} again, one deduces
  that on the interval $\bigl(\frac{2}{\pi^2},\frac{12+\pi^2}{8\pi^2}\bigr)$ the function $-v'$ can be expressed as a product of
  three positive, strictly decreasing terms. Hence, on this interval $v'$ is negative and strictly increasing. Recall that
  $u'<u'\bigl(\frac{2}{\pi^2}\bigr)=v'\bigl(\frac{2}{\pi^2}\bigr)$ on $\bigl(\frac{2}{\pi^2},\frac{4}{\pi^2+4}\bigr)$, which now
  implies that
  \[
   u'(x) < u'\Bigl(\frac{2}{\pi^2}\Bigr) = v'\Bigl(\frac{2}{\pi^2}\Bigr) < v'(x)\quad\text{ for }\quad
   \frac{2}{\pi^2} < x < \frac{12+\pi^2}{8\pi^2}\,.
  \]
  Since the inequality $u'(x)<v'(x)$ has already been shown for $x\ge\frac{12+\pi^2}{8\pi^2}$, one concludes that $u'<v'$ holds on
  the whole interval $\bigl(\frac{2}{\pi^2},\frac{4}{\pi^2+4}\bigr)$. This completes the proof.
 \end{proof}%
\end{lemma}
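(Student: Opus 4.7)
The plan is to treat $\alpha$, $y$, and $\theta$ as continuous functions of $x$ on $\bigl[\frac{2}{\pi^2},\frac{4}{\pi^2+4}\bigr]$. The quadratic relations from Lemma \ref{lem:critPoints} say that $x$ and $y$ are the two preimages of $\alpha$ under the map $z\mapsto\frac{\sqrt{1-\pi^2z^2}}{1-2z}$. Eliminating $\alpha$ yields the explicit expressions $y(x) = \frac{4-(\pi^2+4)x}{\pi^2+4-4\pi^2x}$ and $1-2y(x) = \frac{\pi^2-4}{\pi^2+4-4\pi^2x}(1-2x)$. At the endpoints, $\alpha$ takes the extreme values $m = \frac{\pi}{2}\tan(\arcsin(\frac{2}{\pi}))$ and $1$, corresponding to $y=\frac{2}{\pi^2}$ and $y=0$ respectively.

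To prove \eqref{app:eq:lem3:theta}, I would differentiate $\theta$ using these relations. A straightforward computation shows that $\theta'(x)$ equals a positive factor times $(12+\pi^2-8\pi^2x)$, so $\theta'$ vanishes uniquely at $x_\star := \frac{12+\pi^2}{8\pi^2}$ and changes sign there from plus to minus. The endpoint values $\theta(\frac{2}{\pi^2}) = \frac{3}{2}\arcsin(\frac{2}{\pi}) < \arcsin(\frac{4\pi}{\pi^2+4}) = \theta(\frac{4}{\pi^2+4})$ give the lower bound in \eqref{app:eq:lem3:theta}, while at $x_\star$ one computes $y(x_\star) = \frac{12-\pi^2}{4\pi^2}$ and hence the upper bound $\theta(x_\star) = \arcsin(\frac{12+\pi^2}{8\pi}) + \frac{1}{2}\arcsin(\frac{12-\pi^2}{4\pi})$. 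A further computation yields $\theta'' < 0$, so $\theta'$ is strictly decreasing on the interval, a fact that will be used below.

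For the main inequality \eqref{app:eq:lem3:estimate}, I would define
\[
  w(x) := (1-2x)^2\bigl(1-2y(x)\bigr) - \biggl(1-\tfrac{2}{\pi}\sin\Bigl(\tfrac{2\theta(x)}{3}\Bigr)\biggr)^3.
\]
The goal is $w>0$ on the open interval. Now $w(\frac{2}{\pi^2}) = 0$ (both terms collapse to $(1-\frac{4}{\pi^2})^3$ once $\theta(\frac{2}{\pi^2}) = \frac{3}{2}\arcsin(\frac{2}{\pi})$ is inserted), and $w(\frac{4}{\pi^2+4}) > 0$ by numerical evaluation. The strategy is to show $w$ is unimodal: increasing on $(\frac{2}{\pi^2},x_\star)$ and decreasing on $(x_\star,\frac{4}{\pi^2+4})$, whence $w>0$ throughout. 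Differentiation and the identity $\frac{\dd}{\dd x}[(1-2x)^2(1-2y(x))] = -\frac{4}{\pi}\alpha(x)(1-2x)^2(1-2y(x))\theta'(x)$ give
\[
  w'(x) = -\tfrac{4}{\pi}\theta'(x)\bigl(u(x)-v(x)\bigr),
\]
with $u(x) = \alpha(x)(1-2x)^2(1-2y(x))$ and $v(x) = (1-\tfrac{2}{\pi}\sin(\tfrac{2\theta(x)}{3}))^2\cos(\tfrac{2\theta(x)}{3})$. The entire argument thus reduces to showing $u(x) < v(x)$ on the open interval, since then $w'$ shares the sign of $\theta'$, as required.

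The pointwise inequality $u<v$ is the principal obstacle. The initial data coincide: $u(\frac{2}{\pi^2}) = v(\frac{2}{\pi^2}) = (1-\frac{4}{\pi^2})^{5/2}$ (using $m = (1-\frac{4}{\pi^2})^{-1/2}$) and $u'(\frac{2}{\pi^2}) = v'(\frac{2}{\pi^2}) < 0$, so it is enough to establish $u'(x) < v'(x)$ throughout. For $u'$, I would compute $u''$ and cast it as $(\pi^2-4)q(x)/[(1-\pi^2x^2)^{3/2}(\pi^2+4-4\pi^2x)^3]$ for an explicit degree-$6$ polynomial $q$; then analyse $q$ via its second derivative $q''$ (degree $4$, expected to have a single sign change on the interval), deduce monotonicity and positivity of $q'$, and combine with a numerical check $q(\frac{4}{\pi^2+4}) < 0$ to conclude $q < 0$ throughout, so that $u'' < 0$ and $u'$ is strictly decreasing, giving $u'(x) < u'(\frac{2}{\pi^2})$. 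For $v'$, use the factorisation $v'(x) = -\frac{2}{3}(1-\frac{2}{\pi}\sin(\frac{2\theta}{3}))\theta'(x)\,r(\sin(\frac{2\theta}{3}))$ with $r(t) = \frac{4}{\pi}+t-\frac{6}{\pi}t^2$ positive and strictly decreasing on $[\frac{1}{2},1]$, noting that $\sin(\frac{2\theta(x)}{3}) \in (\frac{1}{2},1)$ by the $\theta$-bounds just proved. On $[x_\star,\frac{4}{\pi^2+4})$ we have $\theta' \le 0$, so $v' \ge 0 > u'$ immediately; on $(\frac{2}{\pi^2},x_\star)$ each of the three factors making up $-v'$ is positive and strictly decreasing (using monotonicity of $\theta$, $\theta'$, and $r$), so $v'$ is negative and strictly increasing, whence $v'(x) > v'(\frac{2}{\pi^2}) = u'(\frac{2}{\pi^2}) > u'(x)$. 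The main difficulty is the polynomial sign analysis for $q$, which is routine in principle but delicate in execution.
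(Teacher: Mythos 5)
Your proposal is essentially the paper's own proof, reproduced step for step: the explicit formula for $y(x)$, the computation of $\theta'$ and its unique zero at $x_\star=\frac{12+\pi^2}{8\pi^2}$ with $\theta''<0$, the function $w$ with $w(\frac{2}{\pi^2})=0$ and the numerical check $w(\frac{4}{\pi^2+4})>0$, the factorization $w'=-\frac{4}{\pi}\theta'(u-v)$, the reduction to $u<v$ via $u'<v'$, the degree-six polynomial $q$ governing the sign of $u''$, and the factorization of $v'$ through $r(t)=\frac{4}{\pi}+t-\frac{6}{\pi}t^2$. Your explicit evaluation $u(\frac{2}{\pi^2})=v(\frac{2}{\pi^2})=(1-\frac{4}{\pi^2})^{5/2}$ is a small bonus over the paper's terser "it is easy to verify", but otherwise the two arguments coincide.
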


\begin{lemma}\label{app:lem4}
 One has
 \[
  \biggl(1-\frac{2}{\pi}\sin\Bigl(\frac{2\theta}{3}\Bigr)\biggr)^3 < \left(1-\frac{2}{\pi}\sin\Bigl(\frac{\theta}{2}\Bigr)\right)^4
  \quad\text{ for }\quad 0<\theta\le\frac{\pi}{2}\,.
 \]
 
 \begin{proof}
  The proof is similar to the one of Lemma \ref{app:lem2}. Define $u,v,w\colon\R\to\R$ by
  \[
   u(\theta):=\sin\Bigl(\frac{\theta}{2}\Bigr)\,,\quad
   v(\theta):=\frac{\pi}{2}-\frac{\pi}{2}\biggl(1-\frac{2}{\pi}\sin\Bigl(\frac{2\theta}{3}\Bigr)\biggr)^{3/4}\,,
   \quad\text{ and }\quad w(\theta):=u(\theta)-v(\theta)\,.
  \]
  Obviously, the claim is equivalent to the inequality $w(\theta)<0$ for $0<\theta\le\frac{\pi}{2}$.
  
  Observe that $u'''(\theta)=-\frac{1}{8}\cos\bigl(\frac{\theta}{2}\bigr)<0$ for $0\le\theta\le\frac{\pi}{2}$. In particular,
  $u'''$ is strictly increasing on $\bigl[0,\frac{\pi}{2}\bigr]$ and satisfies $u'''\ge u'''(0)=-\frac{1}{8}$.
  
  One computes
  \begin{equation}\label{app:eq:lem4:d4v}
   v^{(4)}(\theta) = \frac{\pi^{1/4}}{54}\,\frac{p\Bigl(\sin\bigl(\frac{2\theta}{3}\bigr)\Bigr)}
     {\Bigl(\pi-2\sin\bigl(\frac{2\theta}{3}\bigr)\Bigr)^{13/4}}\quad\text{ for }\quad
   0\le\theta\le\frac{\pi}{2}\,,
  \end{equation}
  where
  \[
   p(x) = 45-16\pi^2 + 4\pi(1+2\pi^2)x - (34+20\pi^2)x^2 + 44\pi x^3 - 27x^4\,.
  \]
  The polynomial $p$ is strictly increasing on $\bigl[0,\frac{\sqrt{3}}{2}\bigr]$ and has exactly one root in the interval
  $\bigl(0,\frac{\sqrt{3}}{2}\bigr)$. Combining this with equation \eqref{app:eq:lem4:d4v}, one obtains that $v^{(4)}$ has a unique
  zero in the interval $\bigl(0,\frac{\pi}{2}\bigr)$ and that $v^{(4)}$ changes its sign from minus to plus there. Moreover, it is
  easy to verify that $v'''\bigl(\frac{\pi}{2}\bigl)<v'''(0)<-\frac{1}{8}$. Hence, one has $v'''<-\frac{1}{8}$ on
  $\bigl[0,\frac{\pi}{2}\bigr]$. Since $u'''\ge-\frac{1}{8}$ on $\bigl[0,\frac{\pi}{2}\bigr]$ as stated above, this implies that
  $w'''=u'''-v'''>0$ on $\bigl[0,\frac{\pi}{2}\bigr]$, that is, $w''$ is strictly increasing on $\bigl[0,\frac{\pi}{2}\bigr]$.
  
  With $w''(0)<0$ and $w''\bigl(\frac{\pi}{2}\bigr)>0$ one deduces that $w''$ has a unique zero in $\bigl(0,\frac{\pi}{2}\bigr)$
  and that $w''$ changes its sign from minus to plus there. Since $w'(0)=0$ and $w'\bigl(\frac{\pi}{2}\bigr)>0$, it follows that
  $w'$ has a unique zero in $\bigl(0,\frac{\pi}{2}\bigr)$, where it changes its sign from minus to plus. Finally, observing that
  $w(0)=0$ and $w\bigl(\frac{\pi}{2}\bigr)<0$, one concludes that $w(\theta)<0$ for $0<\theta\le\frac{\pi}{2}$.
 \end{proof}%
\end{lemma}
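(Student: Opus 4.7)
Since both sides of the claimed inequality lie in $[0,1]$, raising both sides to the fourth root reduces the problem to showing
\[
\sin(\theta/2) < \frac{\pi}{2} - \frac{\pi}{2}\Bigl(1-\frac{2}{\pi}\sin(\tfrac{2\theta}{3})\Bigr)^{3/4} \quad\text{for}\quad 0<\theta\le\frac{\pi}{2}.
\]
Setting $u(\theta) := \sin(\theta/2)$, $v(\theta) := \frac{\pi}{2} - \frac{\pi}{2}\bigl(1-\frac{2}{\pi}\sin(\tfrac{2\theta}{3})\bigr)^{3/4}$, and $w := u - v$, the task becomes proving $w < 0$ on $(0, \pi/2]$. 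The setup is structurally identical to Lemma~\ref{app:lem2}, so I would borrow its strategy: propagate sign information through $w''', w'', w', w$ using the boundary values at $0$ and $\pi/2$.

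First, one records $w(0) = 0$, $w'(0) = 0$ (since $u$ and $v$ have matching value and first derivative at $0$), $w''(0) < 0$, $w(\pi/2) < 0$, $w'(\pi/2) > 0$, and $w''(\pi/2) > 0$; all of these are direct computations or straightforward numerical checks. The heart of the argument is to establish that $w''$ is strictly increasing on $[0, \pi/2]$. Since $u'''(\theta) = -\frac{1}{8}\cos(\theta/2)$ trivially satisfies $u''' \ge -\frac{1}{8}$, this reduces to showing $v''' < -\frac{1}{8}$ throughout $[0, \pi/2]$. A direct calculation expresses $v^{(4)}(\theta)$ as a positive multiple of $p(\sin(2\theta/3))/\bigl(\pi - 2\sin(2\theta/3)\bigr)^{13/4}$ for an explicit polynomial $p$ of degree four. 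I would verify that $p$ is strictly increasing on $[0, \sqrt{3}/2]$ with exactly one root there, so that $v^{(4)}$ has a unique zero in $(0, \pi/2)$ and changes sign from minus to plus. Combined with evaluating $v'''(\pi/2) < v'''(0) < -\frac{1}{8}$, this gives the required bound on $v'''$.

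Once $w''' > 0$ is known, the rest is an automatic cascade. The function $w''$ is strictly increasing with $w''(0) < 0 < w''(\pi/2)$, hence has a unique zero in $(0, \pi/2)$ where it changes sign from minus to plus; then $w'$, which satisfies $w'(0) = 0$ and $w'(\pi/2) > 0$, likewise has a unique zero in $(0, \pi/2)$ where it changes sign from minus to plus; finally $w$ starts at $0$, decreases to a strict minimum, and then increases to $w(\pi/2) < 0$, forcing $w < 0$ on $(0, \pi/2]$. The main obstacle is the explicit analysis of $v^{(4)}$: identifying the polynomial $p$ and showing it is monotonic on $[0,\sqrt{3}/2]$ with a unique root there is a routine but somewhat laborious real-variable computation involving the constant $\pi$. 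Everything else in the argument is essentially formal once this step is carried out.
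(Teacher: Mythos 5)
Your proposal is correct and follows essentially the same route as the paper: the fourth-root reduction to $w = u - v < 0$, the bound $u''' \ge -\tfrac{1}{8}$ together with $v''' < -\tfrac{1}{8}$ (established via the sign pattern of $v^{(4)}$ through the polynomial $p$), and the resulting cascade $w''' > 0 \Rightarrow w''$ increasing $\Rightarrow w'$ and then $w$ each with a single sign change, using the stated boundary values at $0$ and $\pi/2$. The only difference is presentational: you leave the explicit coefficients of $p$ unwritten, whereas the paper records them.
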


\end{appendix}

\section*{Acknowledgements}
The author is indebted to his Ph.D.\ advisor Vadim Kostrykin for introducing him to this field of research and fruitful
discussions. The author would also like to thank Andr\'{e} H\"anel for a helpful conversation.


\end{document}